\documentclass[12pt]{article}
\usepackage[latin1]{inputenc}
\usepackage{a4,exscale,graphicx,amsmath,amssymb,hyperref,verbatim,capt-of}

\setlength{\textwidth}{16cm} \setlength{\oddsidemargin}{.5cm}


\newtheorem{theorem}{Theorem}[section]

\newtheorem{corollary}{Corollary}[section]

\newtheorem{proposition}{Proposition}[section]

\newtheorem{definition}{Definition}[section]

\newenvironment{proof}{\noindent\textbf{Proof.}}{\hfill $\blacksquare$\\}



\newcommand{\D}{\displaystyle}

\newcommand{\s}{\scriptscriptstyle}



\newcommand{\IR}{\mathbb{R}}

\newcommand{\IH}{\mathbb{H}}
\newcommand{\sIH}{{\scriptscriptstyle\mathbb{H}}}

\newcommand{\IC}{\mathbb{C}}
\newcommand{\sIC}{{\scriptscriptstyle\mathbb{C}}}

\newcommand{\IN}{\mathbb{N}}
\newcommand{\INo}{\mathbb{N}_{0}}

\newcommand{\IZ}{\mathbb{Z}}

\newcommand{\sOm}{{\scriptscriptstyle\Omega}}

\newcommand{\IBp}{\mathbb{B}_{3}^{+}}
\newcommand{\sIBp}{{\scriptscriptstyle\mathbb{B}_{3}^{+}}}

\newcommand{\IBm}{\mathbb{B}_{3}^{-}}

\newcommand{\A}{\mathtt{A}}
\newcommand{\B}{\mathtt{B}}
\newcommand{\C}{\mathtt{C}}

\newcommand{\Ah}{\check{A}}


\DeclareMathOperator\sign{sign}


\title{On a three dimensional analogue to the holomorphic $z$-powers}
\author{S. Bock\thanks{Institute of Mathematics/Physics, Bauhaus-University, Weimar, Germany,
e-mail: sebastian.bock@uni-weimar.de}}
\date{}

\begin{document}
\maketitle \abstract{The main objective of this article is a constructive generalization of the
holomorphic power and Laurent series expansions in $\IC$ to dimension 3 using the framework of
hypercomplex function theory. For this reason, deals the first part of this article with
generalized Fourier \& Taylor series expansions in the space of square integrable quaternion-valued
functions which possess peculiar properties regarding the hypercomplex derivative and primitive. In
analogy to the complex one-dimensional case, both series expansions are orthogonal series with
respect to the unit ball in $\IR^3$ and their series coefficients can be explicitly (one-to-one)
linked with each other. Furthermore, very compact and efficient representation formulae
(recurrence, closed-form) for the elements of the orthogonal bases are presented. The latter
results are then used to construct a new orthonormal bases of outer solid spherical monogenics in
the space of square integrable quaternion-valued functions. This finally leads to the definition of
a generalized Laurent series expansion for the spherical shell.\\}

\textbf{Keywords:} Complete orthonormal systems, solid spherical monogenics, Fourier series, Taylor
series, recurrence formulae, closed-form representations, Laurent series\\

\textbf{AMS Subject Classification:} 30G35, 32A05, 42C05, 65Q30\\[4ex]
\textbf{This manuscript is a preprint version. An extended version with the complete proofs has
been submitted to Complex Variables \& Elliptic Equations \cite{Bock2010a,Bock2010b}.}\newpage

\section{Introduction}

Let $\mathbb{B}_{2}^{+}$ be the unit disc and $f$ a holomorphic function in the space of square
integrable holomorphic functions $L^{2}(\mathbb{B}_{2}^{+};\IC)\cap\ker\bar{\partial}$. As is well
known, an orthonormal basis of holomorphic polynomials $\left\{\tilde{z}^{n}\right\}_{n\in\INo}$ in
$L^{2}(\mathbb{B}_{2}^{+};\IC)\cap\ker\bar{\partial}$ is obtained by the normalization of the
complex monomials
\begin{equation*}
\tilde{z}^{n}\,=\,\sqrt{\frac{n+1}{\pi}}\,z^{n}
\end{equation*}
with respect to the unit disc. From the orthogonality and completeness of the above basis follows
directly that each function $f\in L^{2}(\mathbb{B}_{2}^{+};\IC)\cap\ker\bar{\partial}$ possesses a
unique Fourier series expansion
\begin{equation*}
f\,=\,\sum_{n=0}^{\infty} \tilde{z}^{n}\boldsymbol{\beta}_{n},\quad \text{with}\quad
\boldsymbol{\beta}_{n}\, = \, <\tilde{z}^{n},f>_{\s(\mathbb{B}_{2}^{+};\IC)}
\,=\,\int_{\mathbb{B}_{2}} \overline{\tilde{z}^{n}} f \, d\sigma.
\end{equation*}
A important structural property of the complex Fourier series is related to their complex
derivative and primitive. Therefore, let us firstly consider the non-normalized basis functions
$z^{n}$ with respect to their complex derivative $\partial_{z} z^{n} = n\,z^{n-1}$ as well as their
holomorphic primitive $\int z^{n} dz = \frac{1}{n+1}\,z^{n+1}$. Here, the significant property
results from the fact that the complex derivation or primitivation of an arbitrary basis function
yields again a real multiple of a single basis function. Formally speaking, this means that the
derivative as well as the primitive of the complex Fourier series can be considered again as a
Fourier series expansion. In addition, it is also well known that in the complex theory exists a
direct relation between the global and the local approximation of a function. Let $f$ be again a
holomorphic function in $L^{2}(\mathbb{B}_{2}^{+};\IC)\cap\ker\bar{\partial}$ then $f$ possesses in
$\mathbb{B}_{2}$ the Taylor series expansion
\begin{equation*}
f\,=\,\left.\sum_{n=0}^{\infty} z^{n}\,\frac{f^{(n)}(z)}{n!}\right|_{z=\mathbf{0}}
\end{equation*}
that is also an orthogonal series in $L^{2}(\mathbb{B}_{2}^{+};\IC)\cap\ker\bar{\partial}$. Again,
this naturally results from the special structural properties of the basis functions $z^{n}$ which
are, in particular with regard to the Taylor series, the orthogonality and that for each basis
element the condition
$z^{n}\in\left(\partial^{n+1}_{z}\setminus\partial_{z}^{n}\right)\cap\ker\bar{\partial}$ holds. A
direct consequence of these orthogonal power series expansions is that the Fourier- and Taylor
coefficients of a given function $f\in L^{2}(\mathbb{B}_{2}^{+};\IC)\cap\ker\bar{\partial}$ are
explicitly (one-to-one) linked with each other.
\begin{figure}[htb]
\begin{center}
\includegraphics[scale=.5,angle=0]{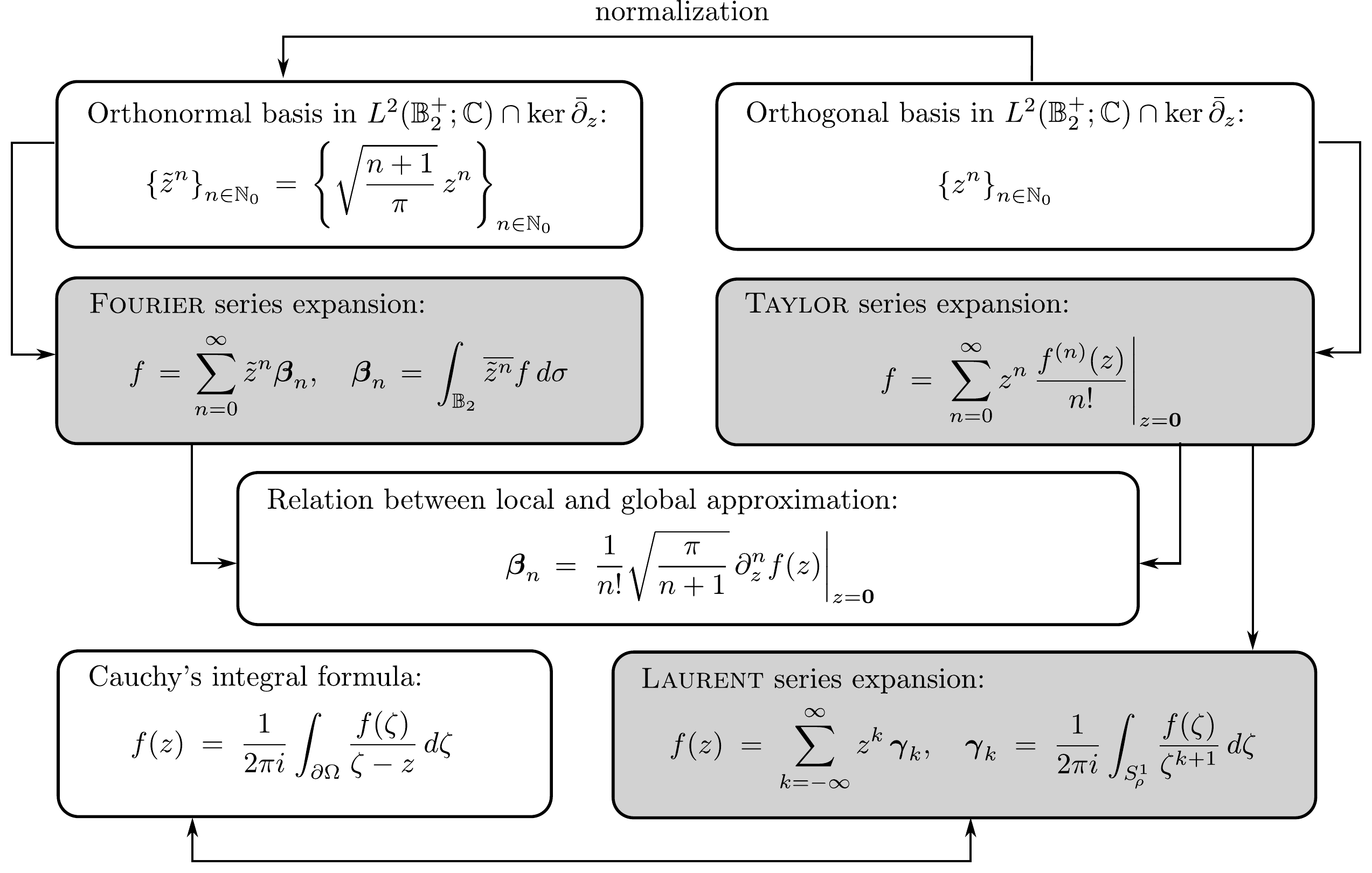}
\caption{ Power and Laurent series expansions in $\IC$. } \label{Figure::OverviewSeriesC}
\end{center}
\end{figure}
If we consider in addition the $z$-powers of negative order then it is also well known that each
holomorphic function defined on an annulus has a unique Laurent series expansion. The corresponding
Laurent coefficients are determined by line integrals which are a generalization of Cauchy's
integral formula. Another remarkable fact is that the secondary part of the Laurent series of a
holomorphic function defined in a certain annulus coincides with their Taylor series expansions and
thus provides another way to compute the Taylor coefficients.

In conclusion of the last paragraph (see Figure \ref{Figure::OverviewSeriesC}), it becomes clear
that the canonical power series expansions (Fourier, Taylor \& Laurent) in the complex
one-dimensional case have an extraordinary quality and a lot of structural relations among each
other. Against this background, one looks also in the higher-dimensional Euclidian space for
function systems generalizing the special structural properties of the series expansions in $\IC$.
An appropriate theoretical framework that is very similar to the complex one-dimensional case is
provided by the theory of monogenic functions. In this article we primarily consider
quaternion-valued monogenic functions that in the following are simply called monogenic or
$\IH$-holomorphic functions, respectively.  Usually (see e.g. \cite{BDS}) monogenic functions are
introduced as solutions of a Dirac equation or of a generalized Cauchy-Riemann system. Already in
1935 it was proved by Fueter \cite{Fueter1935} that a monogenic function can locally be developed
in a Taylor series with respect to the so-called symmetric powers (symmetric products of the
Fueter-variables). This local representation was also used in \cite{BDS} for the description of
monogenic functions and as a technical tool in many proofs. In 1990 it was proved by Malonek
\cite{Mal1990} that monogenic functions can be defined equivalently by their Taylor series with
respect to the symmetric powers and also by a hypercomplex derivability. The latter was refined and
extended to arbitrary dimensions in \cite{Gurlebeck1999} and it was shown that in all real space
dimensions the operator $\frac{1}{2}\partial$ can be understood as a hypercomplex derivative. Here,
$\partial$ stands for the adjoint generalized Cauchy-Riemann operator. A survey on this work and an
extensive list of references can be found in \cite{Gurlebeck2008}. However, in comparison to the
complex Taylor series, where the series itself as well as their complex derivative and primitive
have the property of an orthogonal series, it turns out that the Taylor series in terms of the
Fueter polynomials doesn't generalize these properties. In \cite{AbulConstales} and subsequent
papers by the same authors a system of special monogenic functions was studied. This approach also
allows to define an orthogonal Taylor-type series expansion but was not related to the hypercomplex
derivability. Considering the second branch of monogenic Fourier series, it must be mentioned that
there are a lot of contributions on the global approximation of monogenic functions by orthogonal
polynomials but mostly also not focused on the connection between local and global approximation.
Therefore, a detailed reference list is omitted because the results are not needed here. An
exception takes the constructive approach \cite{DissCacao, Cacao2004a} where a few structural
properties of the complex Fourier series expansion could be generalized. Therein, one could obtain,
similar to the complex case, explicit series representations of the hypercomplex derivative and
primitive based on a monogenic Fourier series expansion in terms of solid spherical monogenics. But
once again, the orthogonality of the resulting series representations (see, e.g., \cite{Cacao2006,
Cacao2007}) isn't preserved. In \cite{DissCacao, GueMorais2007} one could further prove explicit
relations between the coefficients of the aforementioned Fourier series and the Taylor series in
terms of the symmetric powers which yields, due to the missing orthogonality, very extensive
expressions.

Coming again back to the complex case, another canonical series expansion is the Laurent series in
terms of the holomorphic powers $z^{k}$, $k\in\mathbb{Z}$ (see, Figure
\ref{Figure::OverviewSeriesC}). Due to the fact that the complex Laurent series is also based on
the $z$-monomials (extended by the elements of negativ order), the series possesses a direct
interplay with the Taylor series expansion as well as with Cauchy's integral formula. In the higher
dimensional case the situation is again different. Here, the Laurent series expansions so far were
primarily used as a theoretical tool in many proofs (see, e.g., \cite{BDS, Delanghe1992}) whereas
explicit formulae were not readily available. A first constructive attempt was made by van Lancker
\cite{VanLancker1999} using Gegenbauer functions that leads to (integral) representations for a
Taylor-type as well as a Laurent-type series expansion.

In summary and against the background of this article, it can be said that the theory of monogenic
functions provides formally seen all the powerful tools known from the complex one-dimensional
theory, as for instance the mentioned Taylor- and Fourier series expansion as well as a Laurent
series expansion and Cauchy's integral formula (see, e.g., \cite{BDS}). However, from a structural
point of view and particularly with regard to the practical applicability of the aforementioned
function theoretical tools it can be observed that their properties considerably differ from their
complex analogues.

In this contribution a very recent approach of monogenic power and Laurent series expansions in the
framework of real quaternions is presented which generalizes all of the aforementioned properties
of the holomorphic series expansions in $\IC$. These results are part of the thesis \cite{Bock2009}
and summarized in the article on hand. For this reason, the article is structured as follows: In
Section 2 we briefly introduce the algebraic setting and fix some notations. Section 3 deals then
with the generalization of the canonical power series expansions Fourier \& Taylor to higher
dimensions using orthogonal bases of solid spherical monogenics. The resulting series expansions
possess special properties regarding the hypercomplex derivation and primitivation. Here, it should
be emphasized that the series expansions as well as their structural properties were already
studied in \cite{Bock2009a} and thus only summarized here. Details and related proofs may be found
in \cite{Bock2009,Bock2009a}. In section 4 \& 5, some new representation formulae (recurrence,
closed-form) for the basis elements are presented which provide a very compact formulation of the
orthogonal bases. In Section 6 an orthonormal basis of outer solid spherical monogenics in the
space of square integrable quaternion-valued functions is constructed by applying the Kelvin
transformation to the elements of the orthonormal basis studied in Section 3. This orthonormal
basis is then used in Section 7 to construct an orthogonal Appell basis of outer spherical
monogenics and in Section 8 to define an orthogonal Laurent series expansion in the spherical
shell. Finally, the interplay of the new Laurent series expansion with Cauchy's integral formula
and the corresponding Taylor series expansion will be emphasized.

\section{Preliminaries and Notations}
Let $\mathbb{H}$ be the algebra of real quaternions with the standard basis
$\{\textbf{e}_0,\textbf{e}_1,\textbf{e}_2,\textbf{e}_3\}$ subjected to the multiplication rules
\begin{equation*}
\begin{array}{rcl}
\textbf{e}_i \textbf{e}_j+\textbf{e}_j \textbf{e}_i & = & -2 \delta_{ij}\,\textbf{e}_0,\;
i,j=1,2,3,\\
 \mathbf{e}_{1}\mathbf{e}_{2} & = & \mathbf{e}_{3},\;\mathbf{e}_{0}\mathbf{e}_{i} \; = \;
 \mathbf{e}_{i}\mathbf{e}_{0}\;=\; \mathbf{e}_{i},\; i=0,1,2,3.
\end{array}
\end{equation*}
The real vector space $\mathbb{R}^{4}$ will be embedded in $\mathbb{H}$ by identifying the element
$\mathbf{a}=(a_{0},a_{1},a_{2},a_{3})^{\mathrm{T}}\in \mathbb{R}^{4}$ with the quaternion
$\mathbf{a}=a_{0}+a_{1}\mathbf{e}_{1}+a_{2}\mathbf{e}_{2}+a_{3}\mathbf{e}_{3}$,
$a_{i}\in\mathbb{R}$, $i=0,1,2,3$, where $\textbf{e}_{0}=(1,0,0,0)^{\mathrm{T}}$ is the
multiplicative unit element of the algebra $\mathbb{H}$ and will be omitted in expressions if there
is no source for misunderstandings. Further, we denote by
\begin{itemize}
\item[(i)] $\mathbf{Sc}(\mathbf{a})= a_{0}$ the scalar part, $\mathbf{Vec}(\mathbf{a})= \underline{\mathbf{a}} =
\sum_{i=1}^{3}a_{i}\mathbf{e}_{i}$ the vector part of $\mathbf{a}$,
\item[(ii)] $\bar{\mathbf{a}}= a_{0}-\underline{\mathbf{a}}$ the conjugate of
$\mathbf{a}$,
\item[(iii)] $|\mathbf{a}|=\sqrt{\mathbf{a}\,\bar{\mathbf{a}}}$ the norm of $\mathbf{a}$,
\item[(iv)] $\mathbf{a}^{-1} = \frac{\bar{\mathbf{a}}}{|\mathbf{a}|^{2}}$, $\mathbf{a}\neq 0$ the inverse of $\mathbf{a}$.
\end{itemize}

The real vector space $\mathbb{R}^{3}$ will be embedded in $\mathbb{H}$ by the identification of
$\mathbf{x}=(x_{0},x_{1},x_{2})^{\mathrm{T}} \in \mathbb{R}^{3}$ with the \textit{reduced
quaternion} $\mathbf{x}=x_{0}+x_{1}\textbf{e}_{1}+x_{2}\textbf{e}_{2}$. As a consequence, we will
often use the same symbol $\mathbf{x}$ to represent a point in $\mathbb{R}^{3}$ as well as to
represent the corresponding reduced quaternion.

Let now $\Omega$ be an open subset of $\mathbb{R}^{3}$ with a piecewise smooth boundary. An
$\mathbb{H}$-valued function is a mapping $f:\Omega\longrightarrow \mathbb{H}$ such that
$f(\mathbf{x})=\sum_{i=0}^{3} f^{i}(\mathbf{x})\,\textbf{e}_{i}\,,\;\mathbf{x} \in \Omega$. The
coordinates $f^{i}(\mathbf{x})$ are real-valued functions defined in $\Omega$, i.e.,
$f^{i}(\mathbf{x}):\Omega \longrightarrow \mathbb{R}\,,i=0,1,2,3$. Continuity, differentiability or
integrability of $f$ are defined coordinate-wisely. We will work with the right $\mathbb{H}$-linear
Hilbert space of square-integrable $\mathbb{H}$-valued functions in $\Omega$ that is denoted by
$L^{2}(\Omega;\mathbb{H})$ and equipped with the $\mathbb{H}$-valued inner product
\begin{equation*}
<\,f,g\,>_{L^{2}(\sOm;\sIH)}= \int_{\Omega}\,\bar{f}\,g\,dV.
\end{equation*}
Here $dV$ denotes the Lebesgue measure in $\mathbb{R}^{3}$. Furthermore, the operator
\begin{equation}\label{Equation::CROperator}
\bar{\partial} := \frac{\partial}{\partial x_{0}} + \frac{\partial}{\partial
\underline{\mathbf{x}}} \;=\; \frac{\partial}{\partial
x_{0}}+\textbf{e}_{1}\,\frac{\partial}{\partial x_{1}}+\textbf{e}_{2}\,\frac{\partial}{\partial
x_{2}}
\end{equation}
is called {\it generalized Cauchy-Riemann} operator. The corresponding {\it adjoint generalized
Cauchy-Riemann} operator is defined by
\begin{equation}\label{Equation::conjugateCROperator}
\partial := \frac{\partial}{\partial
x_{0}} - \frac{\partial}{\partial \underline{\mathbf{x}}} \;=\; \frac{\partial}{\partial
x_{0}}-\textbf{e}_{1}\,\frac{\partial}{\partial x_{1}}-\textbf{e}_{2}\,\frac{\partial}{\partial
x_{2}}.
\end{equation}
At this point, we emphasize that throughout this article the introduced differential operators are
considered as operators acting from the left and analogously denoted as in the complex analysis
(see, e.g., \cite{Gurlebeck2008}) which is vice versa to the commonly used notation in Clifford
analysis. This leads to the following definitions:
\begin{definition}
A function $f\in C^{1}(\Omega;\mathbb{H})$ is called {\it monogenic} or {\it$\IH$-holomorphic} in
$\Omega\subset\IR^3$ if
\begin{equation}\label{Equation::MonogenicFunction}
\bar{\partial} f=0\;\;\mbox{in}\;\;\Omega\;\;(\mbox{or
equivalently}\;\;f\in\ker\bar{\partial}\;\;\mbox{in}\;\;\Omega).
\end{equation}
Conversely, a function $\widehat{f}\in C^{1}(\Omega;\mathbb{H})$ is called {\it anti-monogenic} or
{\it anti-$\IH$-holomorphic} in $\Omega\subset\IR^3$ if
\begin{equation}\label{Equation::Anti-MonogenicFunction}
\partial\widehat{f} = 0\;\;\mbox{in}\;\;\Omega\;\;(\mbox{or
equivalently}\;\;f\in\ker\partial\;\;\mbox{in}\;\;\Omega).
\end{equation}
\end{definition}
In view of the upcoming calculations we also need a more specific formulation of the monogenicity
criteria (\ref{Equation::MonogenicFunction}) that is in particular advantageous for the conversion
of a given monogenic function into an anti-monogenic function and vice versa. Thus, we introduce
for a given function $f\in C^{1}(\Omega;\IH)$ the notation $f = f_{\mathbf{e}_{0}\mathbf{e}_{3}} +
f_{\mathbf{e}_{1}\mathbf{e}_{2}} := \left(f^{0}+f^{3}\mathbf{e}_{3}\right) +
\left(f^{1}\mathbf{e}_{1}+f^{2}\mathbf{e}_{2}\right)$. Applying now the compact formulation of the
differential operator (\ref{Equation::CROperator}) to $f$ yields an equivalent definition of
$\IH$-holomorphy (\ref{Equation::MonogenicFunction}) given by the system
\begin{equation}\label{Equation::MonogenicFunction_PDESystem}
\left.
\begin{array}{lcrcl}
f \in \ker \bar{\partial} & \Leftrightarrow & \D\frac{\partial
f_{\mathbf{e}_{0}\mathbf{e}_{3}}}{\partial x_{0}} + \frac{\partial
f_{\mathbf{e}_{1}\mathbf{e}_{2}}}{\partial
\underline{\mathbf{x}}}& = & 0,\\
& \Leftrightarrow &\D\frac{\partial f_{\mathbf{e}_{1}\mathbf{e}_{2}}}{\partial x_{0}} +
\frac{\partial f_{\mathbf{e}_{0}\mathbf{e}_{3}}}{\partial \underline{\mathbf{x}}}& = & 0.
\end{array}\right\}
\end{equation}
Based on the last representation one can conclude a significant connection between monogenic and
anti-monogenic functions.
\begin{corollary}\label{Corollary::Anti-MonogenicFunction-Construction}
Let $f=\sum_{i=0}^{3}f^{i}\mathbf{e}_{i}\,\in C^{1}(\Omega;\IH)$ be a monogenic function in
$\Omega\subset\IR^{3}$. The function
\begin{equation}\label{Equation::Anti-MonogenicFunction-Construction}
\widehat{f}\;:=\; f^{0} - f^{1}\mathbf{e}_{1} - f^{2}\mathbf{e}_{2} + f^{3}\mathbf{e}_{3}
\end{equation}
defines an anti-monogenic function in $\Omega$, such that $\partial \widehat{f} = 0$.
\end{corollary}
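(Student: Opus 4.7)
The plan is to observe that $\widehat{f}$ is obtained from $f$ by flipping the sign on the $\mathbf{e}_1,\mathbf{e}_2$-components while leaving the $\mathbf{e}_0,\mathbf{e}_3$-components untouched. In the notation introduced just above the corollary, this means precisely
\[
\widehat{f} \;=\; f_{\mathbf{e}_{0}\mathbf{e}_{3}} - f_{\mathbf{e}_{1}\mathbf{e}_{2}}.
\]
So my first step would be to record this identity and then apply the adjoint Cauchy--Riemann operator $\partial = \partial_{x_{0}} - \partial/\partial\underline{\mathbf{x}}$ to $\widehat{f}$, expanding into four terms.

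Next I would use the key parity observation that underlies the split system \eqref{Equation::MonogenicFunction_PDESystem}: the operator $\partial/\partial\underline{\mathbf{x}} = \mathbf{e}_1\partial_{x_1} + \mathbf{e}_2\partial_{x_2}$ swaps the subspace spanned by $\{\mathbf{e}_0,\mathbf{e}_3\}$ with the one spanned by $\{\mathbf{e}_1,\mathbf{e}_2\}$ (a short check using $\mathbf{e}_1\mathbf{e}_2=\mathbf{e}_3$, $\mathbf{e}_2\mathbf{e}_3=\mathbf{e}_1$, $\mathbf{e}_3\mathbf{e}_1=\mathbf{e}_2$), whereas $\partial_{x_0}$ preserves each subspace. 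Hence in the expansion of $\partial\widehat{f}$, the $\{\mathbf{e}_0,\mathbf{e}_3\}$-part reads
\[
\frac{\partial f_{\mathbf{e}_{0}\mathbf{e}_{3}}}{\partial x_{0}} \;+\; \frac{\partial f_{\mathbf{e}_{1}\mathbf{e}_{2}}}{\partial \underline{\mathbf{x}}},
\]
and the $\{\mathbf{e}_1,\mathbf{e}_2\}$-part reads
\[
-\,\frac{\partial f_{\mathbf{e}_{1}\mathbf{e}_{2}}}{\partial x_{0}} \;-\; \frac{\partial f_{\mathbf{e}_{0}\mathbf{e}_{3}}}{\partial \underline{\mathbf{x}}},
\]
where the minus signs in the second line come exactly from the sign flip in the definition of $\widehat{f}$ combining with the sign flip in $\partial$ versus $\bar\partial$.

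Finally, both expressions vanish by the two lines of the monogenicity system \eqref{Equation::MonogenicFunction_PDESystem} applied to $f$. Thus $\partial\widehat{f}=0$, which is the definition of anti-monogenicity in $\Omega$. The only subtle bookkeeping is verifying that $\partial/\partial\underline{\mathbf{x}}$ really does swap the two component-subspaces in the claimed way; once that parity is in hand, the whole argument reduces to a sign check between the decompositions of $f$ and $\widehat{f}$ and the two defining equations of monogenicity, so I do not anticipate any genuine obstacle.
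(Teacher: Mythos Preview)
Your proposal is correct and follows essentially the same route as the paper: write $\widehat{f}=f_{\mathbf{e}_{0}\mathbf{e}_{3}}-f_{\mathbf{e}_{1}\mathbf{e}_{2}}$, apply $\partial=\partial_{x_0}-\partial/\partial\underline{\mathbf{x}}$, and observe that the resulting expression groups exactly into the two combinations appearing in the monogenicity system \eqref{Equation::MonogenicFunction_PDESystem}, each of which vanishes. Your additional remark that $\partial/\partial\underline{\mathbf{x}}$ swaps the $\{\mathbf{e}_0,\mathbf{e}_3\}$ and $\{\mathbf{e}_1,\mathbf{e}_2\}$ subspaces is precisely the reason the split system \eqref{Equation::MonogenicFunction_PDESystem} is well-posed in the first place, so you are really just spelling out what the paper leaves implicit.
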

\begin{proof}
Let $f\in\ker\bar\partial$ be denoted by $f = f_{\mathbf{e}_{0}\mathbf{e}_{3}} +
f_{\mathbf{e}_{1}\mathbf{e}_{2}} = \left(f^{0}+f^{3}\mathbf{e}_{3}\right) +
\left(f^{1}\mathbf{e}_{1}+f^{2}\mathbf{e}_{2}\right)$, we have $\widehat{f} =
f_{\mathbf{e}_{0}\mathbf{e}_{3}} - f_{\mathbf{e}_{1}\mathbf{e}_{2}}$. Applying the compact
formulation of the differential operator (\ref{Equation::conjugateCROperator}) and using the
relations (\ref{Equation::MonogenicFunction_PDESystem}) yields the proof of the corollary
\begin{equation*}
0\;\stackrel{!}{=}\;\partial \widehat{f}\,=\, \left(\frac{\partial
f_{\mathbf{e}_{0}\mathbf{e}_{3}}}{\partial x_{0}} + \frac{\partial
f_{\mathbf{e}_{1}\mathbf{e}_{2}}}{\partial \underline{\mathbf{x}}}\right) - \left(\frac{\partial
f_{\mathbf{e}_{1}\mathbf{e}_{2}}}{\partial x_{0}} + \frac{\partial
f_{\mathbf{e}_{0}\mathbf{e}_{3}}}{\partial \underline{\mathbf{x}}}\right)\,=\,0.
\end{equation*}
\end{proof}

Here, it should be emphasized that in the complex one-dimensional case the conjugation of an
holomorphic function $f\in C^{1}(\Omega;\IC)$ gives directly the corresponding anti-holomorphic
function $\widehat{f}$ and thus $\bar{f} \equiv \widehat{f}$. For $\IH$-valued monogenic functions
this property doesn't hold in general as Corollary
\ref{Corollary::Anti-MonogenicFunction-Construction} shows. An exceptional case takes the subset of
$\mathcal{A}$-valued monogenic functions which can be easily deduced from the latter calculations.

Furthermore, we need the well known concept of the hypercomplex derivative that was first
introduced by Malonek \cite{Malonek1987} for the quaternionic case and later on also generalized to
arbitrary dimensions \cite{Gurlebeck1999}.
\begin{definition}[Hypercomplex derivative]\label{Definition::HypercomplexDerivative}
Let $f\in C^{1}(\Omega;\IH)$ be a continuous, real-differentiable function and monogenic in
$\Omega$. The expression $\partial_{0}f := \frac{1}{2}\partial f$ is called {\it hypercomplex
derivative} of $f$ in $\Omega$.
\end{definition}
As a consequence of Definition \ref{Definition::HypercomplexDerivative}, we introduce a special
subset of monogenic functions characterized by vanishing first derivatives.
\begin{definition}[Monogenic constant]\label{Definition::MonogenicConstant}
A $C^{1}$-function belonging to $\ker \partial_{0}\cap\ker\bar{\partial}$ is called monogenic
constant.
\end{definition}
Moreover, we state the definition of a monogenic primitive.
\begin{definition}[Primitive]
A function $F\in C^{1}(\Omega;\IH)$ is called monogenic primitive of a monogenic function $f$ with
respect to the hypercomplex derivative $\partial_{0}$, if
\begin{equation*}
F \in \ker \bar{\partial}\quad \text{ and } \quad \partial_{0} F = f.
\end{equation*}
If for a given function $f\in\ker\bar{\partial}$ such a function $F$ exists then it will be denoted
by $\mathcal{P}f := F$.
\end{definition}
Let us remark that in the complex theory a holomorphic primitive of a holomorphic function is
simply constructed by line integrals. However, a higher dimensional analogue of this elementary
procedure does not exist due to line integrals are in general not path independent. Here, we will
use an operator approach as for instance applied in \cite{Cacao2007}, where an primitivation
operator acting on each element of an orthogonal basis is defined and extended by continuity to the
whole space. In this connection, it is important to mention that already several definitions for
the monogenic primitive as a right inverse operator to the hypercomplex derivative $\partial_{0}$
exist (see, e.g., \cite{Delanghe2006, GueMorais2007} or a survey in \cite{Gurlebeck2008}). The
primitivation operator defined in this article extend the results of \cite{Cacao2007} by important
structural properties.

\section{Orthonormal bases of solid spherical monogenics and power series expansions}

As it was already emphasized in the beginning of this paper, complex power series expansions are in
particular orthogonal series expansions with respect to the unit disc. It is for this reason to
look now in the spatial case for orthogonal systems on the first place with respect to the unit
ball in $\IR^3$. First of all, let us fix some notations:
\begin{definition}
\begin{itemize}
\item[(\textsc{i})] Let $\IBp:= \IBp(0,1)$ be the unit ball
and $\overline{\mathbb{B}}_{3}^{+} := \IBp \cup S^{2} $ its closure with the unit sphere
$S^{2}:=\partial\IBp$. The corresponding outer domain is denoted by $\IBm:= \IR^{3} \setminus
\overline{\mathbb{B}}_{3}^{+}$ and its closure by $\overline{\mathbb{B}}_{3}^{-}:= \IR^{3}
\setminus \IBp$.
\item[(\textsc{ii})] Let $\mathcal{M}_{n}^{+}$ be the space of homogeneous monogenic polynomials of degree $n$.
An arbitrary element $P_{n}$ of $\mathcal{M}_{n}^{+}$ is called \textit{inner solid spherical
monogenic} of degree $n$.
\item[(\textsc{iii})]  Let $\mathcal{M}_{n}^{-}$ be the space of homogeneous monogenic functions in
$\IR_{0}^{3} = \IR^{3}\setminus\{0\}$ with degree of homogeneity $-(n+2)$. An arbitrary element
$Q_{n}$ of $\mathcal{M}_{n}^{-}$ is called \textit{outer solid spherical monogenic} of degree $n$.
\end{itemize}
\end{definition}
As a matter of course, we will work with spherical coordinates
\begin{equation*}
x_0 = r \cos \theta,\quad x_1 = r \sin \theta \cos\varphi,\quad x_2 = r \sin \theta \sin \varphi,
\end{equation*}
where $0<r<\infty$, $0<\theta\le\pi$, $0<\varphi\le 2\pi$. Each $\mathbf{x}\in \IR^3\setminus
\{\mathbf{0}\}$ admits a unique representation $\mathbf{x} = r\boldsymbol{\omega}$, where
$\boldsymbol{\omega}=\omega_0+\omega_1\mathbf{e}_1+\omega_2\mathbf{e}_2$, with
$\omega_j=\frac{x_j}{r}$ ($j=0,1,2$) and $|\boldsymbol{\omega}|=1$. The spherical representation of
the hypercomplex derivative is then given by
\begin{equation}\label{Equation::conjugateCROperator-spherical}
\partial_{0} = \frac{1}{2}\left( \overline{\boldsymbol{\omega}}\frac{\partial}{\partial
r}+\frac{1}{r}\;\overline{L}\right)
\end{equation}
where
\begin{equation*}
\overline{L}\,=\,(-\sin{\theta}-\cos{\theta}\cos{\varphi}\,\textbf{e}_1-
\cos{\theta}\sin{\varphi}\,\textbf{e}_2)\frac{\partial}{\partial
\theta}+\frac{1}{\sin{\theta}}(\sin{\varphi}\,\textbf{e}_1-\cos{\varphi}\,\textbf{e}_2)\frac{\partial}{\partial
\varphi}.
\end{equation*}

\subsection{The basic polynomial toolbox}

One simple and constructive way to generate monogenic functions is based on the factorization of
the Laplace operator $\Delta = \partial\bar{\partial}$ by the generalized Cauchy-Riemann operator
and its adjoint operator. In other words this means that applying the operator $\partial$ to a
harmonic function subsequently yields a monogenic function. In \cite{DissCacao, Cacao2004a} this
construction principle was used for the first time to construct a linearly independent system of
inner solid spherical monogenics in a quite explicit way. This construction was based on the well
known system of spherical harmonics
\begin{equation*}
\{U_{n+1}^l,\;V_{n+1}^m,\,\,l=0,\ldots,n+1,\,m=1,\ldots,n+1 \}_{n \in \INo}
\end{equation*}
considered, e.g., in \cite{Sansone1959}. Here, we will only summarize the final representation
formulae of the system elements and refer for a detailed study and related proofs to
\cite{DissCacao, Cacao2004a}. The set of inner solid spherical monogenics is defined by
\begin{equation}\label{Equation::SolidSpherMonogenics}
\{r^{n}\,X_{n}^l, \;r^{n}\,Y_{n}^m,\,\,l=0,\ldots,n+1,m=1,\ldots,n+1 \}_{n \in \INo}
\end{equation}
where the so-called spherical monogenics are given by
\begin{eqnarray}
X_{n}^l &:=&\partial_{0}\left[r^{n+1}\;U_{n+1}^l\right]\Bigl|_{r=1}\nonumber\\
      &=&\A^{l,{n}} \cos l{\varphi}\nonumber\\
      & &+\,(\B^{l,{n}}\cos{\varphi}\cos{l\varphi}-\C^{l,{n}} \sin{\varphi}\sin{l\varphi})\,\textbf{e}_1\nonumber\\
      & &+\,(\B^{l,{n}}\sin{\varphi}\cos{l\varphi}+\C^{l,{n}}\cos{\varphi}\sin{l\varphi})\,\textbf{e}_2,
 \label{Equation::MonogenicPoly_Xm}
\end{eqnarray}
\begin{eqnarray}
Y_{n}^m &:=&\partial_{0}\left[r^{n+1}\;V_{n+1}^m\right]\Bigl|_{r=1}\nonumber\\
      &=&\A^{m,{n}} \sin m\varphi\nonumber\\
      & &+\,(\B^{m,{n}}\cos{\varphi}\sin{m\varphi}+\C^{m,{n}} \sin{\varphi}\cos{m\varphi})\,\textbf{e}_1\nonumber\\
      & &+\,(\B^{m,{n}}\sin{\varphi}\sin{m\varphi}-\C^{m,{n}} \cos{\varphi}\cos{m\varphi})\,\textbf{e}_2,
      \label{Equation::MonogenicPoly_Ym}
\end{eqnarray}
\noindent with the coefficient functions
\begin{eqnarray}
\A^{m,{n}}&\hspace{-0.3cm}:=&\hspace{-0.3cm}\frac{1}{2}\Bigl(\sin^{2}\theta\frac{d}{dt}[P_{n+1}^m(t)]_{t=\cos{\theta}}+
(n+1)\cos{\theta}P_{n+1}^m(\cos{\theta})\Bigr),\label{Equation::HomogMonogPoly_Am}\\
\B^{m,{n}}&\hspace{-0.3cm}:=&\hspace{-0.3cm}\frac{1}{2}\Bigl(\sin{\theta}\cos{\theta}
\frac{d}{dt}[P_{n+1}^m(t)]_{t=\cos{\theta}}-(n+1)\sin{\theta} P_{n+1}^m(\cos{\theta})\Bigr),
\label{Equation::HomogMonogPoly_Bm}\\
\C^{m,{n}}&\hspace{-0.3cm}:=&\hspace{-0.3cm}\frac{1}{2}\,m
\frac{1}{\sin{\theta}}\,P_{n+1}^m(\cos{\theta}).\label{Equation::HomogMonogPoly_Cm}
\end{eqnarray}
The functions $P_{n+1}^m$ appearing in
(\ref{Equation::HomogMonogPoly_Am})-(\ref{Equation::HomogMonogPoly_Cm}) are the associated Legendre
functions which, as it is well known, are solutions to several recurrence formulae. In this article
we particularly need the recurrence relations
\begin{eqnarray}
(1-t^2)(P_{n+1}^m(t))' & = & (n+m+1) P_{n}^{m}(t) -
(n+1)\,t\,P_{n+1}^{m}(t),\label{Equation::LegendreRecurrence_I}\\[2ex]
(1-t^2)^{1/2}(P_{n+1}^m(t))' & = & P_{n+1}^{m+1}(t)-m\,(1-t^2)^{-1/2}\,t
\,P_{n+1}^m(t),\label{Equation::LegendreRecurrence_II}\\[2ex]
(1-t^2)^{1/2}P_{n+1}^m(t)& = &
\frac{1}{2n+3}\left(P_{n+2}^{m+1}(t)-P_{n}^{m+1}(t)\right)\label{Equation::LegendreRecurrence_III}
\end{eqnarray}
and the two-step formula
\begin{equation}\label{Equation::LegendreRecurrence_ThreeTermFormula}
(n-m+1)P_{n+1}^m(t)-(2n+1)\,t \,P_{n}^m(t)+(n+m)P_{n-1}^m(t)=0,
\end{equation}
where $m=0,\ldots,n+1$. For a detailed study of the associated Legendre functions we refer, e.g.,
to \cite{Andrews1998} and \cite{Sansone1959}. Here, it must be remarked that the system of solid
spherical monogenics (\ref{Equation::SolidSpherMonogenics}) is also extensively studied in the
context of square integrable $\mathcal{A}$-valued functions (solutions to the so-called Riesz
system). Already in \cite{DissCacao} it was proved that system
(\ref{Equation::SolidSpherMonogenics}) is an orthogonal system with respect to the unit ball $\IBp$
and in \cite{Cacao-Malonek2006} its completeness in
$L^{2}(\IBp;\mathcal{A})\cap\ker\bar{\partial}$. Later on, this system was used in
\cite{GueMorais2007,GueMorais2009,GueMorais2010a,GueMorais2010b} to prove real part estimates of
monogenic power series and some further structural properties of the polynomials itself. The reader
is cautioned that the set $\mathcal{A}$ is only a real vector space but not a sub-algebra of $\IH$
and thus considerably differs from the situation considered in this article.

Finally, we end up by recalling a result that relates the coefficient functions
(\ref{Equation::HomogMonogPoly_Am})-(\ref{Equation::HomogMonogPoly_Cm}) among each other. In
\cite[proof of Proposition 3.1]{Bock2007b} it was proved that for a fixed $n\in\INo$ the relations
\begin{eqnarray}
(n+m+2)\,\A^{m,n} & = & \C^{m+1,n} - \B^{m+1,n},\label{Equation::RelationABC_I}\\
\A^{m+1,n} & = & (n+m+2)\bigl(\B^{m,n} + \C^{m,n}\bigr)\label{Equation::RelationABC_II}
\end{eqnarray}
hold, with $m=0,\ldots,n$.

\subsection{An $\IH$-linear complete orthonormal system of inner solid spherical monogenics}

For each $n \in \INo$, we now denote the normalized set of inner solid spherical monogenics
(\ref{Equation::SolidSpherMonogenics}) multiplied by a basis element $\mathbf{e}_{j}$, $j=0,1,2,3$
from the right by
\begin{equation}\label{Equation::SolidSpherMonogenics-nomalized}
\tilde{X}_{n,j}^{0,\dagger}:=\frac{r^{n}X_n^{0}\,\mathbf{e}_{j}}{\bigl\|r^{n}X_n^{0}\bigr\|_{L^{2}(\sIBp)}},\quad
\tilde{X}_{n,j}^{m,\dagger}:=
\frac{r^{n}X_n^{m}\,\mathbf{e}_{j}}{\bigl\|r^{n}X_n^{m}\bigr\|_{L^{2}(\sIBp)}},\quad
\tilde{Y}_{n,j}^{m,\dagger}:=
\frac{r^{n}Y_n^{m}\,\mathbf{e}_{j}}{\bigl\|r^{n}Y_n^{m}\bigr\|_{L^{2}(\sIBp)}},
\end{equation}
where the norms (see, e.g., \cite{DissCacao}) are explicitly given by
\begin{equation}\label{Equation::NormSolidSpherMonogenics}
\left.
\begin{array}{l}
\D\bigl\|r^{n}X_{n}^{0}\bigr\|_{L^{2}(\sIBp;\sIH)}\;=\;\sqrt{\frac{\pi(n+1)}{2n+3}},\\
\D\bigl\|r^{n}X_{n}^{m}\bigr\|_{L^{2}(\sIBp;\sIH)}\;=\;\bigl\|r^{n}Y_{n}^{m}\bigr\|_{L^{2}(\sIBp;\sIH)}\;=\;
\sqrt{\frac{\pi(n+1)(n+m+1)!}{2(2n+3)(n-m+1)!}},
\end{array}\right\}
\end{equation}
with $m=1,\ldots,n+1$. As it was already stated in \cite{Bock2008} and particularly proved in
\cite{Bock2009,Bock2009a}, special $\IH$-linear combinations of the inner solid spherical
monogenics (\ref{Equation::SolidSpherMonogenics-nomalized}) constitute the orthonormal system:
\begin{theorem}[\cite{Bock2009,Bock2009a}]
For each $n\in \INo$ the following $n+1$ inner solid spherical monogenics are orthonormal in
$L^{2}(\IBp;\IH) \cap \ker \bar{\partial}$:
\begin{equation}\label{Equation::CONS_IH}
\left.
\begin{array}{lcl}
\varphi_{n,\sIH}^{0} & := & \tilde{X}_{n,0}^{0,\dagger},\vspace{0.1cm}\\
\varphi_{n,\sIH}^{l} & := & c_{n,-l}\left( \tilde{X}_{n,0}^{l,\dagger} -
\tilde{Y}_{n,3}^{l,\dagger}\right),
\end{array}\right\}
\end{equation}
where $c_{n,-l}=\sqrt{\frac{n+1}{2(n-l+1)}}$ and $l=1,\ldots,n$.
\end{theorem}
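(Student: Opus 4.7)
My plan has three ingredients: membership in $\ker\bar{\partial}$, unit normalization, and mutual orthogonality, the last two being where the real work lies. Monogenicity is immediate: by construction (see (\ref{Equation::MonogenicPoly_Xm})--(\ref{Equation::MonogenicPoly_Ym})) the polynomials $r^{n}X_{n}^{l}$ and $r^{n}Y_{n}^{l}$ arise from applying $\partial_{0}=\tfrac{1}{2}\partial$ to solid spherical harmonics, so they lie in $\ker\bar{\partial}$ because $\bar{\partial}\partial=\Delta$ annihilates harmonic functions. Since $\bar{\partial}$ acts from the left, right-multiplication by a constant $\mathbf{e}_{j}$ and $\mathbb{H}$-linear combinations preserve monogenicity, and therefore each $\varphi_{n,\mathbb{H}}^{l}$ is monogenic in $\IBp$.

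For the normalization, I would first observe that right-multiplication by a unit basis element is an $L^{2}$-isometry, since $|f\mathbf{e}_{j}|^{2}=f\mathbf{e}_{j}\bar{\mathbf{e}}_{j}\bar{f}=|f|^{2}$. Together with (\ref{Equation::NormSolidSpherMonogenics}) this already gives $\|\varphi_{n,\mathbb{H}}^{0}\|=1$ and $\|\tilde{X}_{n,j}^{l,\dagger}\|=\|\tilde{Y}_{n,j}^{l,\dagger}\|=1$. For $l\geq 1$ the quaternionic polarization identity
\begin{equation*}
\|f-g\|^{2}=\|f\|^{2}+\|g\|^{2}-2\,\mathbf{Sc}\langle f,g\rangle
\end{equation*}
and the explicit value $c_{n,-l}^{2}=(n+1)/(2(n-l+1))$ reduce $\|\varphi_{n,\mathbb{H}}^{l}\|=1$ to the single scalar identity $\mathbf{Sc}\langle\tilde{X}_{n,0}^{l,\dagger},\tilde{Y}_{n,3}^{l,\dagger}\rangle=l/(n+1)$.

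The core of the proof is therefore this scalar identity, together with the cross-orthogonality $\langle\varphi_{n,\mathbb{H}}^{l},\varphi_{n,\mathbb{H}}^{l'}\rangle=0$ for $l\neq l'$. In either case I would pass to spherical coordinates, factor out the radial integral $\int_{0}^{1}r^{2n+2}\,dr=1/(2n+3)$, and expand the integrand via (\ref{Equation::MonogenicPoly_Xm})--(\ref{Equation::MonogenicPoly_Ym}). The $\varphi$-integration kills almost every cross-term by the standard orthogonality of $\{\cos l\varphi,\sin l\varphi\}_{l\geq 0}$ on $[0,2\pi]$; what survives are $\theta$-integrals of products of the coefficient functions $\A^{l,n},\B^{l,n},\C^{l,n}$ weighted by $\sin\theta$. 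I would apply (\ref{Equation::RelationABC_I})--(\ref{Equation::RelationABC_II}) to recast $\B,\C$-factors as combinations of $\A$'s, and then the Legendre recurrences (\ref{Equation::LegendreRecurrence_I})--(\ref{Equation::LegendreRecurrence_ThreeTermFormula}) to collapse the remainder onto integrals of squared associated Legendre functions, whose values are precisely those already encoded in (\ref{Equation::NormSolidSpherMonogenics}).

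The main obstacle is extracting the factor $l/(n+1)$ from $\mathbf{Sc}\langle\tilde{X}_{n,0}^{l,\dagger},\tilde{Y}_{n,3}^{l,\dagger}\rangle$. After taking the scalar part of $\overline{X_{n}^{l}}\,Y_{n}^{l}\,\mathbf{e}_{3}$ one is left with a $\theta$-integral of the mixed type involving $\A^{l,n}\C^{l,n}$ and $\B^{l,n}\C^{l,n}$, and it is precisely here that the index-shift in (\ref{Equation::RelationABC_II}) combined with the two-step recurrence (\ref{Equation::LegendreRecurrence_ThreeTermFormula}) should conspire to produce the required $l/(n+1)$, thereby justifying the particular choice of normalization constant $c_{n,-l}=\sqrt{(n+1)/(2(n-l+1))}$.
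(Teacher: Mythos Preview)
The paper does not actually prove this theorem: it records the statement and defers to \cite{Bock2009,Bock2009a} for the argument, so there is no in-paper proof to compare your plan against.

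Your outline is nonetheless the natural direct route and is sound. One observation that will shorten the computation considerably: instead of treating $X_{n}^{l}$ and $Y_{n}^{l}\mathbf{e}_{3}$ separately, use the simplification the paper itself records just before (\ref{Equation::RecurrenceAppell_Spher_Anl}), namely
\[
X_{n,0}^{l}-Y_{n,3}^{l}
=\A^{l,n}\bigl(\cos l\varphi-\sin l\varphi\,\mathbf{e}_{3}\bigr)
+(\B^{l,n}+\C^{l,n})\bigl(\cos(l{+}1)\varphi\,\mathbf{e}_{1}+\sin(l{+}1)\varphi\,\mathbf{e}_{2}\bigr),
\]
so that only the two $\varphi$-frequencies $l$ and $l{+}1$ appear. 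Combined with (\ref{Equation::RelationABC_II}), which gives $\B^{l,n}+\C^{l,n}=\A^{l+1,n}/(n{+}l{+}2)$, the whole family is expressed through the single sequence $\A^{m,n}$; the $\varphi$-integration is then immediate and the remaining $\theta$-integrals reduce to the Legendre norms already encoded in (\ref{Equation::NormSolidSpherMonogenics}). In this form both the quaternion-valued cross-orthogonality for $l\neq l'$ (not merely its scalar part) and your target identity $\mathbf{Sc}\langle\tilde X_{n,0}^{l,\dagger},\tilde Y_{n,3}^{l,\dagger}\rangle=l/(n{+}1)$ drop out with only light bookkeeping.
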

In \cite{Sudbery1979}, Sudbery proved that $\dim \mathcal{M}_{n}^{+}(\IH) = n+1$. It is also known
that for $n\neq m$ the elements of the subspaces $\mathcal{M}_{n}^{+}(\IH)$ and
$\mathcal{M}_{m}^{+}(\IH)$ are automatically orthogonal in $L^{2}(\IBp;\IH) \cap \ker
\bar{\partial}$. Since, for a fixed $n\in\INo$, the system (\ref{Equation::CONS_IH}) is of
dimension $n+1$ and thus an orthonormal basis in $\mathcal{M}_{n}^{+}(\IH) \subset L^{2}(\IBp;\IH)
\cap \ker \bar{\partial}$, we conclude:
\begin{corollary}[\cite{Bock2009,Bock2009a}]
The system of inner solid spherical monogenics $\bigl\{
\varphi_{n,\sIH}^{l}\,:\,l=0,\ldots,n\bigr\}_{n\in\INo}$ is an orthonormal basis in
$L^{2}(\IBp;\IH) \cap \ker \bar{\partial}$.
\end{corollary}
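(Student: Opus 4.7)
The plan is to assemble three ingredients that the discussion preceding the corollary has already isolated. First, the theorem immediately above guarantees that for each fixed $n\in\INo$ the $n+1$ functions $\varphi_{n,\sIH}^{l}$ are orthonormal in $L^{2}(\IBp;\IH)\cap\ker\bar{\partial}$. Second, Sudbery's formula $\dim\mathcal{M}_{n}^{+}(\IH)=n+1$ matches the number of orthonormal (hence linearly independent) elements produced at degree $n$, so the list $\{\varphi_{n,\sIH}^{l}\}_{l=0}^{n}$ is automatically a basis of $\mathcal{M}_{n}^{+}(\IH)$. Third, a passage to spherical coordinates $\mathbf{x}=r\boldsymbol{\omega}$ separates the radial factor $r^{n}$ from the spherical part, and integration in $r$ shows that elements of $\mathcal{M}_{n}^{+}(\IH)$ and $\mathcal{M}_{m}^{+}(\IH)$ with $n\neq m$ are $L^{2}(\IBp;\IH)$-orthogonal. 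Consequently the whole system $\{\varphi_{n,\sIH}^{l}\,:\,l=0,\ldots,n\}_{n\in\INo}$ is orthonormal.

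The only point not already reduced to bookkeeping is completeness in $L^{2}(\IBp;\IH)\cap\ker\bar{\partial}$. My plan is to invoke the classical interior Taylor expansion of a monogenic function in terms of Fueter polynomials, as reviewed in the introduction: any $f\in L^{2}(\IBp;\IH)\cap\ker\bar{\partial}$ admits, on each $\IBp(0,\rho)$ with $\rho<1$, a normally convergent homogeneous decomposition $f=\sum_{n=0}^{\infty}f_{n}$ with $f_{n}\in\mathcal{M}_{n}^{+}(\IH)$. Combining this with the orthogonality across degrees just established, the partial sums $S_{N}=\sum_{n=0}^{N}f_{n}$ are the orthogonal projections of $f$ onto the finite-dimensional subspace $\bigoplus_{n\le N}\mathcal{M}_{n}^{+}(\IH)$, so by Bessel's inequality $\|S_{N}\|_{L^{2}(\sIBp;\sIH)}\le\|f\|_{L^{2}(\sIBp;\sIH)}$. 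Passing $\rho\to 1^{-}$ and using monotone/dominated convergence yields $S_{N}\to f$ in $L^{2}(\IBp;\IH)$, hence $\overline{\bigoplus_{n\ge 0}\mathcal{M}_{n}^{+}(\IH)}=L^{2}(\IBp;\IH)\cap\ker\bar{\partial}$. Since each $\mathcal{M}_{n}^{+}(\IH)$ is already spanned by $\{\varphi_{n,\sIH}^{l}\}_{l=0}^{n}$, the asserted orthonormal basis property follows.

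The main obstacle is this last density step: one has to upgrade compact-interior convergence of the Fueter expansion to $L^{2}$-convergence on the full unit ball. Once that technical passage is carried out (for an $\mathcal{A}$-valued counterpart this is the completeness result of \cite{Cacao-Malonek2006}, and the same argument transfers to the $\IH$-linear setting because the degree decomposition and the orthogonality across degrees do not depend on whether the coefficients are $\mathcal{A}$- or $\IH$-valued), everything else is a direct consequence of the dimension count and the theorem.
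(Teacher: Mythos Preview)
Your proposal is correct and follows essentially the same route as the paper: the paragraph immediately preceding the corollary already records the three ingredients you list (orthonormality at each fixed degree from the preceding theorem, Sudbery's dimension formula $\dim\mathcal{M}_{n}^{+}(\IH)=n+1$, and automatic $L^{2}(\IBp;\IH)$-orthogonality between different degrees), and the corollary is then stated as a direct consequence with the remaining details deferred to \cite{Bock2009,Bock2009a}. The only difference is that you go further than the paper does here by sketching the density/completeness argument (upgrading the compactly convergent homogeneous decomposition to $L^{2}(\IBp;\IH)$-convergence), which the paper simply takes as known.
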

Due to the orthogonality and the completeness of the orthonormal system (\ref{Equation::CONS_IH})
we state the Fourier series expansion in $L_{2}(\IBp;\IH) \cap \ker \bar{\partial}$.
\begin{corollary}[Fourier series in $L_{2}(\IBp;\IH) \cap \ker \bar{\partial}$]
Let $f \in L_{2}(\IBp;\IH) \cap \ker \bar{\partial}$. Then $f$ can be uniquely represented in terms
of the orthonormal system (\ref{Equation::CONS_IH}), that is:
\begin{equation}\label{Equation::FourierSeries}
f\,:=\, \sum_{n=0}^{\infty}\sum_{l=0}^{n}\,\varphi_{n,\sIH}^{l}\,
\boldsymbol{\alpha}_{n,l},\quad\text{with}\quad\boldsymbol{\alpha}_{n,l}\,=\,\int_{\mathbb{B}_{3}}
\overline{\varphi_{n,\sIH}^{l}}\,f\,dV.
\end{equation}
\end{corollary}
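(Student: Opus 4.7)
The statement is a standard corollary that follows from the previously established orthonormal basis property together with general right $\IH$-linear Hilbert space theory, so the plan is to invoke the abstract Fourier expansion theorem in this setting and verify that the coefficient formula is the correct one. The first step is to note that $L_{2}(\IBp;\IH)\cap\ker\bar{\partial}$ is a closed subspace of $L_{2}(\IBp;\IH)$ (the Bergman space of monogenic functions), hence is itself a right $\IH$-linear Hilbert space under the restriction of the $\IH$-valued inner product $\langle\cdot,\cdot\rangle_{L^{2}(\sIBp;\sIH)}$ defined in Section~2. Completeness of this subspace follows from the fact that $L^2$-convergent sequences of monogenic functions converge locally uniformly together with all their derivatives, so the limit is again monogenic.

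Next, I would use the preceding corollary, which asserts that $\bigl\{\varphi_{n,\sIH}^{l}:l=0,\ldots,n\bigr\}_{n\in\INo}$ is an orthonormal basis of $L_{2}(\IBp;\IH)\cap\ker\bar\partial$. For a fixed $f$ in this space, define the Fourier coefficients by
\begin{equation*}
\boldsymbol{\alpha}_{n,l}\;:=\;\langle\varphi_{n,\sIH}^{l},f\rangle_{L^{2}(\sIBp;\sIH)}\;=\;\int_{\IBp}\overline{\varphi_{n,\sIH}^{l}}\,f\,dV.
\end{equation*}
By Bessel's inequality in the right $\IH$-linear setting, $\sum_{n,l}|\boldsymbol{\alpha}_{n,l}|^{2}\le\|f\|_{L^{2}(\sIBp;\sIH)}^{2}$, so the partial sums $S_{N}:=\sum_{n=0}^{N}\sum_{l=0}^{n}\varphi_{n,\sIH}^{l}\,\boldsymbol{\alpha}_{n,l}$ form a Cauchy sequence in $L^{2}(\IBp;\IH)$ and converge to some $g\in L_{2}(\IBp;\IH)\cap\ker\bar\partial$.

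Then I would use completeness of the orthonormal system to conclude $g=f$. Concretely, the difference $f-g$ lies in $L_{2}(\IBp;\IH)\cap\ker\bar\partial$ and, by continuity of the inner product and orthonormality, satisfies $\langle\varphi_{n,\sIH}^{l},f-g\rangle=0$ for all admissible $n,l$. Since the system is a basis of the subspace, the only element orthogonal to every basis element is zero, hence $f=g$, giving Parseval's identity $\|f\|_{L^{2}}^{2}=\sum_{n,l}|\boldsymbol{\alpha}_{n,l}|^{2}$ as a byproduct. Uniqueness is immediate: if $f=\sum_{n,l}\varphi_{n,\sIH}^{l}\,\boldsymbol{\beta}_{n,l}$ is any $L^{2}$-convergent expansion, taking the inner product $\langle\varphi_{k,\sIH}^{j},\cdot\rangle$ of both sides and using orthonormality together with continuity of the inner product forces $\boldsymbol{\beta}_{k,j}=\boldsymbol{\alpha}_{k,j}$.

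The only mildly delicate point, and thus the main thing to get right, is the care required to work in a right $\IH$-linear (rather than complex) Hilbert space: one must multiply by the $\IH$-valued coefficients from the correct side so that the inner product extracts them correctly, and one must verify that the relevant abstract Hilbert space results (Bessel, Parseval, basis expansion) remain valid for right $\IH$-modules with an $\IH$-valued inner product. This is well known in quaternionic analysis and is the reason the inner product was defined with conjugation on the left factor in Section~2; once this convention is observed, the standard proof goes through verbatim.
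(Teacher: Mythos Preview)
Your proposal is correct and is precisely the standard argument the paper has in mind: the corollary is stated without proof, immediately after the basis corollary, with the remark that it follows ``due to the orthogonality and the completeness of the orthonormal system''. Your write-up simply spells out this abstract Fourier expansion argument in the right $\IH$-linear Hilbert space setting, so there is nothing to compare --- you have filled in exactly the details the paper leaves implicit.
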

Here, it should be emphasized that in contrast to the complex case the order of
$\varphi_{n,\sIH}^{l}$ and $f$ in the inner products has to be respected. Equivalently,
characterizing $f\in L_{2}(\IBp;\IH) \cap \ker \bar{\partial}$ by means of the Fourier coefficients
yields naturally \textit{Parsevals identity}:
\begin{corollary}\label{Equation::ParsevalsIdentity}
$f \in L_{2}(\IBp;\IH) \cap \ker \bar{\partial}$ is equivalent to $\D
\sum_{n=0}^{\infty}\sum_{l=0}^{n}\,|\boldsymbol{\alpha}_{n,l}|^{2}\,<\,\infty$.
\end{corollary}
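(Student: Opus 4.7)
The plan is to apply the standard Bessel/Parseval machinery of Hilbert space theory to the right $\IH$-linear Hilbert space $L^{2}(\IBp;\IH) \cap \ker \bar{\partial}$, using the orthonormal basis $\{\varphi_{n,\sIH}^{l}\}$ established in the preceding corollary. The only delicate point is bookkeeping of the quaternionic inner product, which is $\IH$-linear in the second argument and conjugate-$\IH$-linear in the first.

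First I would work with the truncated Fourier sum $S_N := \sum_{n=0}^{N}\sum_{l=0}^{n}\varphi_{n,\sIH}^{l}\,\boldsymbol{\alpha}_{n,l}$ and expand
\begin{equation*}
\|f-S_N\|_{L^{2}(\sIBp;\sIH)}^{2} \,=\, \langle f,f\rangle - \langle f,S_N\rangle - \langle S_N,f\rangle + \langle S_N,S_N\rangle.
\end{equation*}
Using the properties $\langle g,h\boldsymbol{a}\rangle = \langle g,h\rangle\boldsymbol{a}$ and $\langle g\boldsymbol{a},h\rangle = \overline{\boldsymbol{a}}\langle g,h\rangle$ together with the orthonormality $\langle\varphi_{n,\sIH}^{l},\varphi_{n',\sIH}^{l'}\rangle = \delta_{nn'}\delta_{ll'}$, each of the last three terms collapses to $\sum_{n=0}^{N}\sum_{l=0}^{n}\overline{\boldsymbol{\alpha}_{n,l}}\,\boldsymbol{\alpha}_{n,l} = \sum_{n=0}^{N}\sum_{l=0}^{n}|\boldsymbol{\alpha}_{n,l}|^{2}$. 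This yields the key identity
\begin{equation*}
\|f-S_N\|_{L^{2}(\sIBp;\sIH)}^{2} \,=\, \|f\|_{L^{2}(\sIBp;\sIH)}^{2} - \sum_{n=0}^{N}\sum_{l=0}^{n}|\boldsymbol{\alpha}_{n,l}|^{2}.
\end{equation*}
Nonnegativity of the left-hand side gives Bessel's inequality and, in particular, shows that $\sum_{n,l}|\boldsymbol{\alpha}_{n,l}|^{2}$ converges whenever $f\in L^{2}(\IBp;\IH)\cap\ker\bar{\partial}$. Completeness of the orthonormal system, already asserted in the previous corollary, then forces $\|f-S_N\|\to 0$, so passing $N\to\infty$ upgrades Bessel's inequality to the Parseval equality $\|f\|^{2}=\sum_{n,l}|\boldsymbol{\alpha}_{n,l}|^{2}$, and in particular the sum is finite.

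For the converse direction I would show that any square-summable sequence $(\boldsymbol{\alpha}_{n,l})$ defines a valid $f$. Given $\sum_{n,l}|\boldsymbol{\alpha}_{n,l}|^{2}<\infty$, the same orthonormality computation gives $\|S_M - S_N\|^{2} = \sum_{(n,l)\in I_{M,N}}|\boldsymbol{\alpha}_{n,l}|^{2}$ for the partial sums, so $(S_N)$ is Cauchy in $L^{2}(\IBp;\IH)$. Since $\ker\bar{\partial}$ is a closed subspace of $L^{2}(\IBp;\IH)$ (the limit of an $L^{2}$-convergent sequence of monogenic functions is again monogenic, e.g.\ by the mean value / Cauchy kernel representation), the limit lies in $L^{2}(\IBp;\IH)\cap\ker\bar{\partial}$, and its Fourier coefficients coincide with the prescribed $\boldsymbol{\alpha}_{n,l}$ by continuity of the inner product and orthonormality.

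The expected main obstacle is keeping the non-commutative algebra straight: coefficients must remain on the right of the basis elements throughout, the conjugation has to be applied to the correct factor when pulling scalars out of the first slot of $\langle\cdot,\cdot\rangle$, and the crucial identification $\overline{\boldsymbol{\alpha}_{n,l}}\,\boldsymbol{\alpha}_{n,l}=|\boldsymbol{\alpha}_{n,l}|^{2}\in\IR_{\geq 0}$ must be invoked at the right moment so that the resulting sum is a real number to which classical monotone/dominated convergence applies. Once this bookkeeping is done, the proof is essentially identical to the complex one.
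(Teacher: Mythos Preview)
Your argument is correct. The paper itself does not supply a proof for this corollary at all: it is stated immediately after the Fourier series corollary as a direct consequence of the fact that $\{\varphi_{n,\sIH}^{l}\}$ is an orthonormal basis of the right $\IH$-linear Hilbert space $L^{2}(\IBp;\IH)\cap\ker\bar{\partial}$, and the text then moves on. In other words, the paper treats Parseval's identity as the standard Hilbert-space fact that it is, once completeness of the orthonormal system has been established.

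What you have written is precisely the routine verification the paper omits. Your handling of the non-commutative bookkeeping (right-linearity in the second slot, conjugate-linearity in the first, and the reduction $\overline{\boldsymbol{\alpha}}\boldsymbol{\alpha}=|\boldsymbol{\alpha}|^{2}\in\IR$) is accurate, and your appeal to the closedness of $\ker\bar{\partial}$ in $L^{2}$ for the converse direction is the correct way to ensure the $L^{2}$-limit of the partial sums is again monogenic. So there is no genuine difference in approach to compare---you have simply made explicit what the paper leaves as an immediate corollary.
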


Let us now focus on some important structural properties of the orthonormal system
(\ref{Equation::CONS_IH}) which extend the $\IH$-linear system constructed in
\cite{DissCacao,Cacao2004a} significantly.
\begin{theorem}[\cite{Bock2009,Bock2009a}]\label{Theorem::CONS_IH-Properties}
For the polynomials $\varphi_{n,\sIH}^{l}$, $l=0,\ldots,n$ of system (\ref{Equation::CONS_IH}), the
following properties hold:
\begin{itemize}
\item[(\textsc{i})] Applying the hypercomplex derivative $\partial_{0} =
\frac{1}{2}\partial$ to the complete orthonormal system (\ref{Equation::CONS_IH}) yields
\begin{equation*}
\partial_{0}\,\varphi_{n,\sIH}^{k}\,=\, \sqrt{\frac{(2n+3)(n-k)(n+k+1)}{2n+1}}\,\varphi_{n-1,\sIH}^{k},\,k=0,\ldots,n-1,\;n\in \IN.
\end{equation*}
\item[(\textsc{ii})] The operator $\mathcal{P}_{\sIH}:\mathcal{M}^{+}_{n}(\IH)
\rightarrow \mathcal{M}^{+}_{n+1}(\IH)$ given by
\begin{equation*}
\mathcal{P}_{\sIH}\,\varphi_{n,\sIH}^{l}\,=\,
\sqrt{\frac{2n+3}{(2n+5)(n-l+1)(n+l+2)}}\,\varphi_{n+1,\sIH}^{l},
\end{equation*}
defines a primitive on each basis element, such that
$\partial_{0}\left[\mathcal{P}_{\sIH}\,\varphi_{n,\sIH}^{l}\right] = \varphi_{n,\sIH}^{l}$ and
$l=0,\ldots,n$, $n\in \INo$.
\item[(\textsc{iii})] Denoting by $\partial_{0}^{n}$ the $n$-fold application of the
hypercomplex derivative, we state
\begin{equation*}
\varphi_{n,\sIH}^{l}\,\in\, \bigl( \ker\partial_{0}^{n-l+1} \setminus \ker\partial_{0}^{n-l}\bigr)
\cap \ker\bar{\partial},\; l=0,\ldots,n,\,n\in \INo,
\end{equation*}
where $\partial_{0}^{0}$ is identified with the identity operator.
\end{itemize}
\end{theorem}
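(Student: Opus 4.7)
The plan is to derive (i) first and then extract (ii) and (iii) as corollaries. For (i), I would write $\partial_{0}$ in its spherical form (\ref{Equation::conjugateCROperator-spherical}) and exploit a structural observation: both $\bar{\boldsymbol{\omega}}\partial_{r}$ and the angular piece $\overline{L}$ preserve the azimuthal mode index $m$, since $\overline{L}$ contains only $\partial_{\theta}$ and $\sin^{-1}\theta\,\partial_{\varphi}$ prefactored by combinations of $1,\mathbf{e}_{1},\mathbf{e}_{2}$ with $\cos\varphi,\sin\varphi$. Reading this off the explicit expressions (\ref{Equation::MonogenicPoly_Xm})--(\ref{Equation::MonogenicPoly_Ym}) shows that $\partial_{0}(r^{n}X_{n}^{m})$ is again a reduced-quaternion-valued trigonometric polynomial of mode $m$ in $\varphi$ of the same $X$-type parity; being monogenic of homogeneity $n-1$, it is forced to be a real scalar multiple of $r^{n-1}X_{n-1}^{m}$, and likewise $\partial_{0}(r^{n}Y_{n}^{m})$ is the same scalar times $r^{n-1}Y_{n-1}^{m}$. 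The problem therefore collapses to computing one proportionality constant $\mu_{n,m}$.

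The constant $\mu_{n,m}$ is pinned down by matching components along $\mathbf{e}_{0},\mathbf{e}_{1},\mathbf{e}_{2}$. Applying $\partial_{0}$ to $r^{n}X_{n}^{m}$ and separating the three real-vector components produces three relations between the triple $(\A^{m,n},\B^{m,n},\C^{m,n})$ and the triple $(\A^{m,n-1},\B^{m,n-1},\C^{m,n-1})$; these relations follow by direct use of the Legendre recurrences (\ref{Equation::LegendreRecurrence_I})--(\ref{Equation::LegendreRecurrence_ThreeTermFormula}), and the cross-identities (\ref{Equation::RelationABC_I})--(\ref{Equation::RelationABC_II}) are then invoked to unify the three resulting constants into a single scalar in closed form. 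Passing from $r^{n}X_{n}^{m},r^{n}Y_{n}^{m}$ to the normalized $\varphi_{n,\sIH}^{k}$ via (\ref{Equation::SolidSpherMonogenics-nomalized}), (\ref{Equation::NormSolidSpherMonogenics}) and (\ref{Equation::CONS_IH}) is then pure algebra on factorials, yielding the claimed $\sqrt{(2n+3)(n-k)(n+k+1)/(2n+1)}$. Since $\partial_{0}$ is $\IH$-linear from the right, the difference $\tilde{X}_{n,0}^{k,\dagger}-\tilde{Y}_{n,3}^{k,\dagger}$ inherits the same scalar relation automatically.

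Part (ii) is then a direct inversion of (i): writing (i) at degree $n+1$ gives $\partial_{0}\varphi_{n+1,\sIH}^{l}=\sqrt{(2n+5)(n-l+1)(n+l+2)/(2n+3)}\,\varphi_{n,\sIH}^{l}$, so defining $\mathcal{P}_{\sIH}\varphi_{n,\sIH}^{l}$ as the reciprocal times $\varphi_{n+1,\sIH}^{l}$ reproduces the stated constant and $\partial_{0}\mathcal{P}_{\sIH}=\mathrm{id}$. Part (iii) is proved by iterating (i): the multiplier stays strictly positive whenever $l<n$, so $\partial_{0}^{n-l}\varphi_{n,\sIH}^{l}$ is a nonzero real multiple of $\varphi_{l,\sIH}^{l}$. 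One further application would require the boundary case $k=n$ of (i), in which the factor $(n-k)$ vanishes; this limiting case reads $\partial_{0}\varphi_{l,\sIH}^{l}=0$, which is independently forced by $\dim\mathcal{M}_{l-1}^{+}(\IH)=l$ (there is no degree-$(l-1)$ basis element of mode $l$ into which $\partial_{0}\varphi_{l,\sIH}^{l}$ could map). Hence $\varphi_{n,\sIH}^{l}\in(\ker\partial_{0}^{n-l+1}\setminus\ker\partial_{0}^{n-l})\cap\ker\bar{\partial}$, with the monogenicity built in by construction.

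The main obstacle is the coefficient matching in (i): turning the raw action of $\bar{\boldsymbol{\omega}}\partial_{r}+r^{-1}\overline{L}$ on the triple $(\A^{m,n},\B^{m,n},\C^{m,n})$ into a single clean scalar multiple of the triple at degree $n-1$. The efficient organisation is first to derive the three componentwise identities using only the Legendre recurrences (\ref{Equation::LegendreRecurrence_I})--(\ref{Equation::LegendreRecurrence_ThreeTermFormula}), and only afterwards invoke (\ref{Equation::RelationABC_I})--(\ref{Equation::RelationABC_II}) to reduce them to a common factor; doing it in the opposite order tends to leave three differently-shaped scalars whose equality is only visible after a nontrivial cancellation.
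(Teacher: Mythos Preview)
The paper does not supply a proof of this theorem; it is stated with citation to \cite{Bock2009,Bock2009a} and the surrounding text explicitly says ``Details and related proofs may be found in \cite{Bock2009,Bock2009a}.'' So there is no in-paper argument to compare against. Your outline is correct in substance: (i) is the only genuine computation, and (ii) and (iii) follow from it exactly as you describe (inversion at degree $n{+}1$, and iteration together with the vanishing at $k=n$ forced by $\dim\mathcal{M}_{l-1}^{+}(\IH)=l$).

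One refinement to the mode-preservation step in (i): arguing directly through the spherical operator $\bar{\boldsymbol{\omega}}\partial_{r}+r^{-1}\overline{L}$ is heavier than necessary, because $\bar{\boldsymbol{\omega}}$ and $\overline{L}$ individually \emph{do} shift $\varphi$-modes at the component level (the $\mathbf{e}_{1},\mathbf{e}_{2}$-coordinates of $X_{n}^{m}$ already carry modes $m\pm 1$), so ``reading this off'' requires tracking nontrivial cancellations. A cleaner way to see the same thing is to use that, for any monogenic $f$, the identity $\bar{\partial}f=0$ gives $\partial_{x_{0}}f=-\partial_{\underline{\mathbf{x}}}f$ and hence $\partial_{0}f=\tfrac{1}{2}\partial f=\partial_{x_{0}}f$. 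The operator $\partial_{x_{0}}$ manifestly commutes with rotations about the $x_{0}$-axis and preserves $\mathcal{A}$-valuedness, so $\partial_{0}(r^{n}X_{n}^{m})$ lands in the one-dimensional real span of $r^{n-1}X_{n-1}^{m}$ (and likewise for $Y$) without any component bookkeeping. After that, your programme---fixing $\mu_{n,m}$ via (\ref{Equation::LegendreRecurrence_I})--(\ref{Equation::LegendreRecurrence_ThreeTermFormula}) and (\ref{Equation::RelationABC_I})--(\ref{Equation::RelationABC_II}), then inserting the norms (\ref{Equation::NormSolidSpherMonogenics}) and the combination (\ref{Equation::CONS_IH})---goes through as written.
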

As a direct consequence of the last theorem, Figure \ref{Figure::CONS_IH} qualitatively illustrates
the structural ordering of the basis elements and the mapping properties of the hypercomplex
derivative as well as the primitivation operator.
\begin{figure}[htb]
\begin{center}
\includegraphics[scale=1.5,angle=0]{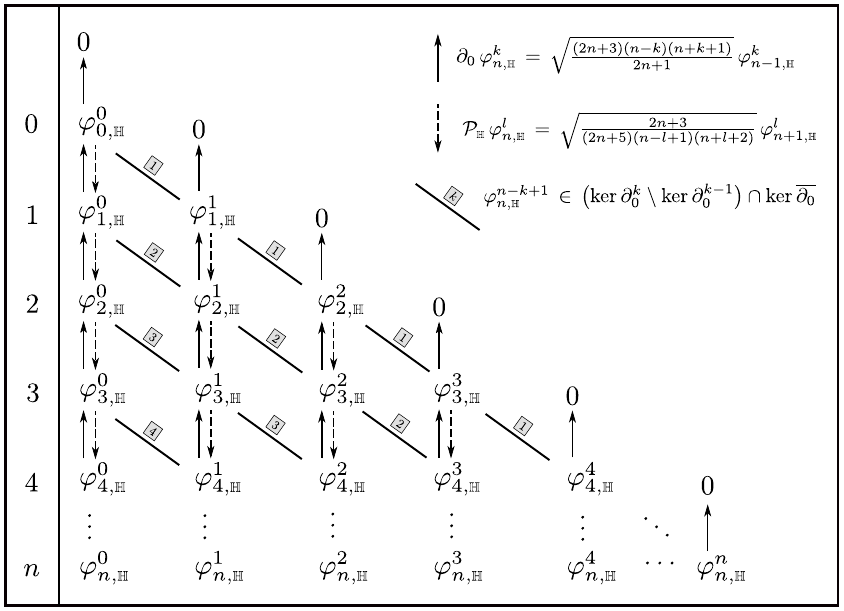}
\caption{Structural scheme of the orthonormal basis in $L^{2}(\IBp;\IH) \cap \ker \bar{\partial}$.}
\label{Figure::CONS_IH}
\end{center}
\end{figure}
Here, some key facts should be highlighted. Firstly, the basis elements are arranged in a lower
triangular matrix in which the polynomials of degree $n$ belong to the $(n+1)$-th row. Secondly,
the application of the hypercomplex derivative $\partial_{0}$ or the primitivation operator
$\mathcal{P}_{\sIH}$ can be viewed as operations on a fixed column of the scheme mapping a basis
element to a single basis element just by multiplying with a real factor and lowering or raising
the polynomial degree. Thirdly, the polynomials $\left\{\varphi_{n,\sIH}^{n} \right\}_{n\in\INo}$
on the upper diagonal are the set of monogenic constants (see Definition
\ref{Definition::MonogenicConstant}) vanishing after applying the hypercomplex derivative once.

\subsubsection{Derivatives of $\IH$-holomorphic functions}

Let us now consider the complete orthonormal system (\ref{Equation::CONS_IH}) in $L_{2}(\IBp;\IH)
\cap \ker \bar{\partial}$ in particular with respect to the hypercomplex derivative $\partial_{0}$.
In this context, the decisive property of the basis (\ref{Equation::CONS_IH}) is that the
application of the hypercomplex derivative to a polynomial of degree $n$ results again to a real
multiple of a single basis element of degree $n-1$. Due to that, the $\IH$-linear system possesses
a strong analogy to the complex power series expansions that will be elaborated next.

From paragraph (\textsc{i}) of Theorem \ref{Theorem::CONS_IH-Properties} we conclude directly:
\begin{proposition}[\cite{Bock2009,Bock2009a}]
The homogeneous monogenic polynomials $\Bigl\{
\partial_{0}\varphi_{n,\sIH}^{k}:
k=0,\ldots,n-1\Bigr\}_{n\in\IN}$ form an orthogonal basis in $L_{2}(\IBp;\IH) \cap \ker
\bar{\partial}$.
\end{proposition}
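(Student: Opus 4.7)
The plan is to exploit Theorem \ref{Theorem::CONS_IH-Properties}(\textsc{i}) directly: it already asserts that each $\partial_{0}\varphi_{n,\sIH}^{k}$ is a nonzero real multiple of the single orthonormal basis element $\varphi_{n-1,\sIH}^{k}$. Because the scalar $\sqrt{(2n+3)(n-k)(n+k+1)/(2n+1)}$ is strictly positive for every admissible pair $(n,k)$ with $n\in\IN$ and $0\le k\le n-1$, the whole proof reduces to a reindexing argument on the original orthonormal basis $\{\varphi_{n,\sIH}^{l}\}$.

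First I would state the scaling identity from Theorem \ref{Theorem::CONS_IH-Properties}(\textsc{i}) and set $\lambda_{n,k}:=\sqrt{(2n+3)(n-k)(n+k+1)/(2n+1)}>0$, so that
\begin{equation*}
\partial_{0}\varphi_{n,\sIH}^{k}\;=\;\lambda_{n,k}\,\varphi_{n-1,\sIH}^{k},\qquad k=0,\ldots,n-1,\;n\in\IN.
\end{equation*}
Orthogonality then follows immediately: for any two index pairs $(n,k)\neq(n',k')$ with $n,n'\in\IN$ and $0\le k\le n-1$, $0\le k'\le n'-1$, the inner product
\begin{equation*}
\langle\partial_{0}\varphi_{n,\sIH}^{k},\,\partial_{0}\varphi_{n',\sIH}^{k'}\rangle_{L^{2}(\sIBp;\sIH)}\;=\;\lambda_{n,k}\lambda_{n',k'}\,\langle\varphi_{n-1,\sIH}^{k},\varphi_{n'-1,\sIH}^{k'}\rangle_{L^{2}(\sIBp;\sIH)}
\end{equation*}
vanishes, since $(n-1,k)\neq(n'-1,k')$ and the original system $\{\varphi_{m,\sIH}^{l}\}$ is orthonormal.

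For the basis property, I would observe that the reindexing $(n,k)\mapsto(n-1,k)$, restricted to $\{(n,k):n\in\IN,\,0\le k\le n-1\}$, is a bijection onto $\{(m,l):m\in\INo,\,0\le l\le m\}$, which is precisely the index set of the orthonormal basis $\{\varphi_{m,\sIH}^{l}\}$ of $L^{2}(\IBp;\IH)\cap\ker\bar\partial$ established in the corollary immediately after \eqref{Equation::FourierSeries}. Consequently, the system $\{\partial_{0}\varphi_{n,\sIH}^{k}\}$ is, up to the positive real rescalings $\lambda_{n,k}$, a permutation of the orthonormal basis itself, and therefore an orthogonal basis of $L^{2}(\IBp;\IH)\cap\ker\bar\partial$.

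There is no genuine obstacle here, since all the analytic content has been absorbed into Theorem \ref{Theorem::CONS_IH-Properties}(\textsc{i}); the only point that deserves a sentence of care is checking that every $\lambda_{n,k}\neq 0$ in the allowed range $0\le k\le n-1$, which is clear from the factors $n-k\ge 1$ and $n+k+1\ge 1$. I would close by remarking that the basis is orthogonal but no longer normalized, the norms being $\|\partial_{0}\varphi_{n,\sIH}^{k}\|_{L^{2}(\sIBp;\sIH)}=\lambda_{n,k}$.
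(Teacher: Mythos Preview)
Your proof is correct and follows exactly the route the paper indicates: the proposition is introduced with ``From paragraph (\textsc{i}) of Theorem \ref{Theorem::CONS_IH-Properties} we conclude directly,'' and your argument is precisely the straightforward unpacking of that implication via the rescaling $\partial_{0}\varphi_{n,\sIH}^{k}=\lambda_{n,k}\varphi_{n-1,\sIH}^{k}$ and the reindexing bijection onto the original orthonormal basis. The only minor slip is a misplaced cross-reference---the completeness corollary you invoke appears \emph{before} the Fourier series formula \eqref{Equation::FourierSeries}, not after it---but this does not affect the mathematics.
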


The hypercomplex derivative of an arbitrary monogenic function represented by the corresponding
Fourier series in $L_{2}(\IBp;\IH) \cap \ker \bar{\partial}$ is given as follows.

\begin{proposition}[\cite{Bock2009,Bock2009a}]
Let $f\in L_{2}(\IBp;\IH) \cap \ker \bar{\partial}$ be represented by its associated Fourier series
(\ref{Equation::FourierSeries}). Applying $\partial_{0}$ to each summand yields formally the series
\begin{equation}\label{Equation::FourierSeries-Derivative}
\partial_{0}\,f\,=\, \sum_{n=0}^{\infty}\sum_{k=0}^{n} \,\sqrt{\frac{(2n+5)(n-k+1)(n+k+2)}{2n+3}}\,\varphi_{n,\sIH}^{k}\,
\boldsymbol{\alpha}_{n+1,k},
\end{equation}
where $\boldsymbol{\alpha}_{n,l}\in\IH$. This series converges pointwisely in $\IBp$ and in
$L_2(\IBp(0,r);\IH)$ for all $r<1$.
\end{proposition}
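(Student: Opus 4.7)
The plan is to apply $\partial_{0}$ term-by-term to (\ref{Equation::FourierSeries}), identify the new coefficients via Theorem~\ref{Theorem::CONS_IH-Properties}(i), reindex, and then establish convergence in two stages.

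First, Theorem~\ref{Theorem::CONS_IH-Properties}(i) gives $\partial_{0}\varphi_{n,\sIH}^{l}=\sqrt{(2n+3)(n-l)(n+l+1)/(2n+1)}\,\varphi_{n-1,\sIH}^{l}$ for $l\le n-1$, while $\partial_{0}\varphi_{n,\sIH}^{n}=0$ since $\varphi_{n,\sIH}^{n}$ is a monogenic constant by Theorem~\ref{Theorem::CONS_IH-Properties}(iii). Differentiating (\ref{Equation::FourierSeries}) termwise and performing the shift $n\mapsto n+1$ then produces exactly the coefficients displayed in (\ref{Equation::FourierSeries-Derivative}).

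For $L_{2}$-convergence on $\IBp(0,r)$ with $r<1$, I would exploit that each $\varphi_{n,\sIH}^{k}$ is homogeneous of degree $n$. Separating radial and angular integration together with the normalization (\ref{Equation::NormSolidSpherMonogenics}) yields $\|\varphi_{n,\sIH}^{k}\|_{L_{2}(\IBp(0,r);\IH)}^{2}=r^{2n+3}$, and distinct basis elements remain orthogonal on every centered ball. The squared norm of the $N$-th tail of (\ref{Equation::FourierSeries-Derivative}) is therefore
\[
\sum_{n\ge N}\sum_{k=0}^{n}\frac{(2n+5)(n-k+1)(n+k+2)}{2n+3}\,|\boldsymbol{\alpha}_{n+1,k}|^{2}\,r^{2n+3}.
\]
The prefactor is $O(n^{2})$, and since $\sum|\boldsymbol{\alpha}_{n,l}|^{2}<\infty$ by Parseval's identity (Corollary~\ref{Equation::ParsevalsIdentity}), the tail is dominated by $C\sum_{n\ge N}n^{3}r^{2n+3}$, which vanishes as $N\to\infty$.

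The delicate step will be pointwise convergence on $\IBp$. My plan is to bootstrap from the $L_{2}$-result: for $\mathbf{x}_{0}\in\IBp$, choose $\rho<1$ and $\varepsilon>0$ with $\overline{\IBp(\mathbf{x}_{0},\varepsilon)}\subset\IBp(0,\rho)$, and apply the monogenic mean-value inequality $|g(\mathbf{x}_{0})|^{2}\le C\varepsilon^{-3}\int_{\IBp(\mathbf{x}_{0},\varepsilon)}|g|^{2}\,dV$, valid for $g\in\ker\bar{\partial}$, to the differences of partial sums. This passage from $L_{2}$- to sup-norm control is the main obstacle; once it is in hand, the very same estimate actually upgrades the conclusion to uniform convergence on every compact subset of $\IBp$, and in particular confirms that the series indeed represents $\partial_{0}f$ and not just a formal derivative.
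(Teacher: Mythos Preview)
Your argument is sound. The formal identity follows exactly as you say from Theorem~\ref{Theorem::CONS_IH-Properties}(i) together with the vanishing of $\partial_{0}\varphi_{n,\sIH}^{n}$, and the reindexing $n\mapsto n+1$ is correct. The $L_{2}$-estimate on $\IBp(0,r)$ via homogeneity and Parseval is clean; note that your tail bound actually gives $Cn^{2}r^{2n+3}$ rather than $n^{3}$, but either suffices. The passage from $L_{2}$- to pointwise convergence through the mean-value inequality for monogenic (hence componentwise harmonic) functions is the standard route and works as you describe. One small point worth making explicit in the final write-up: to conclude that the limit of the differentiated series is genuinely $\partial_{0}f$, invoke the interior estimates for harmonic functions, which upgrade the $L_{2}$-convergence of the partial sums of $f$ itself to $C^{1}$-convergence on compacta; this is implicit in your last sentence but deserves a line.

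As for comparison with the paper: there is nothing to compare against. The proposition is quoted from \cite{Bock2009,Bock2009a} and the present article gives no proof, explicitly deferring all details of Section~3 to those references. Your proposal therefore stands on its own, and the approach you take---Parseval control of the coefficients combined with the subharmonicity/mean-value bound---is the natural one for this kind of statement.
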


Comparing the representation (\ref{Equation::FourierSeries-Derivative}) with the system of
hypercomplex derivatives constructed in \cite{Cacao2006}, we emphasize that the decisive quality of
the new system results from the fact that the hypercomplex derivative of an arbitrary function
represented by its Fourier series yields again an orthogonal series. More precisely, in
\cite{Cacao2006} one could just prove that the operator $\partial_{0}$ maps an orthonormal basis to
a set of linear independent polynomials. Now the resulting set is also an orthogonal basis which is
quite analogous to the complex one-dimensional case. To illustrate the afore said, let us consider,
for instance, the Fourier series of a holomorphic function $f\in L_{2}(\mathbb{B}^{+}_{2};\IC) \cap
\ker \bar{\partial}_{z}$, given by
\begin{equation*}
f\,=\,\sum_{n=0}^{\infty} \tilde{z}^{n}\boldsymbol{\beta}_{n},\quad\boldsymbol{\beta}_{n}\in \IC,
\end{equation*}
where the set $\left\{ \tilde{z}^{n} \right\}_{n\in\INo}$, $z\in \IC$ is the standard orthonormal
basis of the normalized $z$-monomials with respect to the unit disc $\mathbb{B}^{+}_{2}$. The
complex derivative $\partial_{z}$ of the Fourier representation of $f$ yields
\begin{equation*}
\partial_z f\,=\,\sum_{n=0}^{\infty}\sqrt{(n+1)(n+2)}\,\tilde{z}^{n}\boldsymbol{\beta}_{n+1},
\end{equation*}
which is up to a real factor and apart from the dimension of the polynomial basis similar to the
representation (\ref{Equation::FourierSeries-Derivative}).

\subsubsection{Primitives of $\IH$-holomorphic functions}

Let us now focus on the primitivation of $\IH$-holomorphic functions. As briefly outlined in the
beginning, the concept of primitivation by line integrals can in principle not be generalized to
$\IH$ whereby other approaches have to be discussed. In \cite{Bock2009,Bock2009a} a primitivation
operator $\mathcal{P}_{\sIH}$ (see also Theorem \ref{Theorem::CONS_IH-Properties}) was introduced
which is characterized as a formal inversion of the hypercomplex derivative. Actually, this
primitivation operator defines a monogenic primitive for each basis element in the considered
subspace $\mathcal{M}^{+}_{n}(\IH)$ and thus has to be extended by continuity to the whole space
$L_{2}(\IBp;\IH) \cap \ker \bar{\partial}$. It is important to mention that already in
\cite{Cacao2007} a similar primitivation operator on a complete orthonormal systems in
$L_{2}(\IBp;\IH) \cap \ker \bar{\partial}$ (q.v., \cite{Cacao2004a}) was defined that could also be
extended to the whole space. The primitivation operator defined on the orthonormal basis
(\ref{Equation::CONS_IH}) improves the results in \cite{Cacao2007} by certain structural and
orthogonality properties which will be summarized in the following.

\begin{proposition}[\cite{Bock2009,Bock2009a}]
Let $f\in L_{2}(\IBp;\IH) \cap \ker \bar{\partial}$ and consider its associated Fourier series
(\ref{Equation::FourierSeries}). The formal application of the $\mathcal{P}_{\sIH}$-operator, given
by paragraph (\textsc{ii}) of Theorem \ref{Theorem::CONS_IH-Properties}, to each summand of the
series yields the orthogonal series
\begin{equation}\label{Equation::FourierSeries-Primitive}
\mathcal{P}_{\sIH}\,f\,=\,
\sum_{n=0}^{\infty}\sum_{l=0}^{n}\,\sqrt{\frac{2n+3}{(2n+5)(n-l+1)(n+l+2)}}\,\varphi_{n+1,\sIH}^{l}\,
\boldsymbol{\alpha}_{n,l}.
\end{equation}
\end{proposition}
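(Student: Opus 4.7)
The plan is to establish the statement in two stages: first verify it for truncated Fourier sums, then pass to the limit using continuity together with a uniform bound on the primitivation coefficients. Throughout, I use right $\IH$-linearity of $\mathcal{P}_{\sIH}$ and Parseval's identity (Corollary \ref{Equation::ParsevalsIdentity}) as the two structural ingredients.

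First, I would write the partial sums of the Fourier series of $f$,
\begin{equation*}
f_{N}\,:=\,\sum_{n=0}^{N}\sum_{l=0}^{n}\,\varphi_{n,\sIH}^{l}\,\boldsymbol{\alpha}_{n,l},
\end{equation*}
which converge in $L^{2}(\IBp;\IH)\cap\ker\bar{\partial}$ to $f$. Right $\IH$-linearity of $\mathcal{P}_{\sIH}$, together with paragraph (\textsc{ii}) of Theorem~\ref{Theorem::CONS_IH-Properties}, immediately yields the identity
\begin{equation*}
\mathcal{P}_{\sIH}\,f_{N}\,=\,\sum_{n=0}^{N}\sum_{l=0}^{n}\,\sqrt{\tfrac{2n+3}{(2n+5)(n-l+1)(n+l+2)}}\;\varphi_{n+1,\sIH}^{l}\,\boldsymbol{\alpha}_{n,l},
\end{equation*}
so the claim reduces to showing that the right-hand side converges in $L^{2}(\IBp;\IH)\cap\ker\bar{\partial}$ and that its limit is exactly $\mathcal{P}_{\sIH}f$, interpreted as the continuous extension advertised after Definition~\ref{Definition::HypercomplexDerivative}.

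Second, I would establish the crucial estimate. Because $0\le l\le n$, the factor
\begin{equation*}
\kappa_{n,l}\,:=\,\sqrt{\tfrac{2n+3}{(2n+5)(n-l+1)(n+l+2)}}
\end{equation*}
satisfies $(n-l+1)(n+l+2)\ge n+2$ and $(2n+3)/(2n+5)<1$, so $\kappa_{n,l}\le (n+2)^{-1/2}\le 1$ uniformly in $(n,l)$. Since the polynomials $\varphi_{n+1,\sIH}^{l}$ appearing on the right belong to the orthonormal basis $\{\varphi_{m,\sIH}^{k}\}$, the Pythagorean identity in the Hilbert space $L^{2}(\IBp;\IH)$ gives
\begin{equation*}
\bigl\|\mathcal{P}_{\sIH}\,f_{N}-\mathcal{P}_{\sIH}\,f_{M}\bigr\|_{L^{2}(\sIBp;\sIH)}^{2}\,=\,\sum_{n=M+1}^{N}\sum_{l=0}^{n}\kappa_{n,l}^{2}\,|\boldsymbol{\alpha}_{n,l}|^{2}\,\le\,\sum_{n=M+1}^{N}\sum_{l=0}^{n}|\boldsymbol{\alpha}_{n,l}|^{2},
\end{equation*}
and the right-hand side tends to $0$ as $M,N\to\infty$ by Corollary~\ref{Equation::ParsevalsIdentity}. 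Hence the image series is Cauchy and converges to some $g\in L^{2}(\IBp;\IH)\cap\ker\bar{\partial}$ (the kernel is closed). Because $\mathcal{P}_{\sIH}$ is a bounded operator on the dense subspace of finite linear combinations with operator norm at most $1$, its continuous extension to the whole space coincides with $g$, which proves formula~(\ref{Equation::FourierSeries-Primitive}).

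The orthogonality claim is then automatic: $\{\varphi_{n+1,\sIH}^{l}:l=0,\dots,n\}_{n\in\INo}$ is an orthonormal subfamily of the basis, and (\ref{Equation::FourierSeries-Primitive}) exhibits $\mathcal{P}_{\sIH}f$ as a literal Fourier expansion in it, the missing terms being precisely the monogenic constants $\varphi_{m,\sIH}^{m}$. As an optional cross-check one may verify $\partial_{0}(\mathcal{P}_{\sIH}f)=f$ by applying paragraph~(\textsc{i}) of Theorem~\ref{Theorem::CONS_IH-Properties} termwise; the corresponding coefficient telescopes to $\boldsymbol{\alpha}_{n,l}$, confirming that the constructed $g$ is indeed a monogenic primitive of $f$. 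The main obstacle in the argument is the continuity/boundedness step: without the uniform estimate $\kappa_{n,l}\le 1$ the termwise application of $\mathcal{P}_{\sIH}$ could, a priori, fail to define an $L^{2}$-function, and it is this elementary but essential bound that legitimates the interchange of $\mathcal{P}_{\sIH}$ and the infinite sum.
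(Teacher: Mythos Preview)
Your argument is correct, but note two points of calibration against the paper.

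First, the proposition as stated in this article carries no proof at all: it is quoted from \cite{Bock2009,Bock2009a}, and the word \emph{formal} in its statement signals that the identity \eqref{Equation::FourierSeries-Primitive} is meant here only as the termwise definition of $\mathcal{P}_{\sIH}f$. The analytic content---boundedness of $\mathcal{P}_{\sIH}$ and $L^{2}$-convergence of the series---is deferred to the \emph{next} theorem (also cited from \cite{Bock2009,Bock2009a}). Your write-up therefore proves strictly more than this proposition asks: the partial-sum/Cauchy argument you give is essentially a proof of items (\textsc{i}) and (\textsc{ii}) of that subsequent theorem.

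Second, your uniform bound $\kappa_{n,l}\le 1$ is valid but coarse. The maximum of $\kappa_{n,l}^{2}=\tfrac{2n+3}{(2n+5)(n-l+1)(n+l+2)}$ over $0\le l\le n$, $n\ge 0$ is attained at $n=l=0$ and equals $\tfrac{3}{10}$, which is exactly the operator norm $\|\mathcal{P}_{\sIH}\|=\sqrt{3/10}$ recorded in the paper. Sharpening your estimate to this value would bring your argument in line with the cited result rather than merely a weaker consequence of it. A minor cross-reference slip: the continuous extension of $\mathcal{P}_{\sIH}$ is discussed in the text surrounding this proposition, not ``after Definition~\ref{Definition::HypercomplexDerivative}''.
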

The monogenic primitive $\mathcal{P}_{\sIH}\,f$ of a function $f\in L_{2}(\IBp;\IH)\cap
\ker\bar{\partial}$, defined by the orthogonal series (\ref{Equation::FourierSeries-Primitive}), is
characterized by the next theorem.
\begin{theorem}[\cite{Bock2009,Bock2009a}]
For the primitivation operator $\mathcal{P}_{\sIH}$ and the corresponding monogenic primitive
$\mathcal{P}_{\sIH} f$ of a function $f\in L^{2}(\IBp;\IH)\cap\ker\bar{\partial}$ hold:
\begin{itemize}
\item[(\textsc{i})] The linear operator
\begin{equation*}
\mathcal{P}_{\sIH} : L^{2}(\IBp;\IH) \cap \ker \bar{\partial}\longrightarrow
L^{2}(\IBp;\IH)\cap\ker\bar{\partial}
\end{equation*}
is bounded and its norm is $\|\mathcal{P}_{\sIH}\|\,=\, \sqrt{\frac{3}{10}}$.
\item[(\textsc{ii})] The orthogonal series $\mathcal{P}_{\sIH}f$ converges in $L^{2}(\IBp;\IH)
\cap \ker \bar{\partial}$.
\item[(\textsc{iii})] The monogenic primitive $\mathcal{P}_{\sIH}
f$ is orthogonal to the subset of monogenic constants in $L^{2}(\IBp;\IH) \cap \ker
\bar{\partial}$. Thus
\begin{equation*}
\mathcal{P}_{\sIH}\,f\,\bot\,\left(\,\ker\partial_{0}\cap\ker\bar{\partial}\,\right)\subset\left(\,
L^{2}(\IBp;\IH) \cap \ker \bar{\partial}\,\right).
\end{equation*}
\end{itemize}
\end{theorem}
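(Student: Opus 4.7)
The three parts all follow from a single structural observation: the primitivation operator $\mathcal{P}_{\sIH}$ acts diagonally on the orthonormal basis $\{\varphi_{n,\sIH}^{l}\}$ by multiplying each basis vector by a real scalar and shifting the degree index $n\mapsto n+1$ while preserving the column index $l$. The plan is therefore to push everything through Parseval's identity (Corollary~\ref{Equation::ParsevalsIdentity}) and to identify the suprema of the explicit scalar factors.

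For (\textsc{i}) and (\textsc{ii}) simultaneously, I would set $\lambda_{n,l}^{2}:=\frac{2n+3}{(2n+5)(n-l+1)(n+l+2)}$ and compute, using orthonormality,
\begin{equation*}
\bigl\|\mathcal{P}_{\sIH}f\bigr\|^{2}_{L^{2}(\sIBp;\sIH)}\,=\,\sum_{n=0}^{\infty}\sum_{l=0}^{n}\lambda_{n,l}^{2}\,|\boldsymbol{\alpha}_{n,l}|^{2}\,\leq\,\Bigl(\sup_{n,l}\lambda_{n,l}^{2}\Bigr)\,\|f\|^{2}_{L^{2}(\sIBp;\sIH)}.
\end{equation*}
Finiteness of the right-hand side immediately yields convergence of the orthogonal series in $L^{2}(\IBp;\IH)\cap\ker\bar\partial$, proving (\textsc{ii}). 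For (\textsc{i}) it then remains to show $\sup_{n,l}\lambda_{n,l}^{2}=3/10$ and that the supremum is attained. Fixing $n$ and writing $a=n-l+1$, $b=n+l+2$ with $a+b=2n+3$ constant, the product $ab$ is minimized at the endpoint $l=n$, giving $ab=2n+2$, so $\max_{l}\lambda_{n,l}^{2}=\frac{2n+3}{(2n+5)(2n+2)}$; this expression is decreasing in $n$ and takes the value $3/10$ at $n=0$. The value is attained by the test function $f=\varphi_{0,\sIH}^{0}$, hence $\|\mathcal{P}_{\sIH}\|=\sqrt{3/10}$.

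For (\textsc{iii}), I would invoke paragraph (\textsc{iii}) of Theorem~\ref{Theorem::CONS_IH-Properties}, which identifies the diagonal elements $\{\varphi_{k,\sIH}^{k}\}_{k\in\INo}$ as precisely the monogenic constants in the basis, so that $\ker\partial_{0}\cap\ker\bar\partial$ coincides with the closed $\IH$-linear span of these diagonal polynomials. On the other hand, the series $\mathcal{P}_{\sIH}f$ in (\ref{Equation::FourierSeries-Primitive}) only involves basis elements $\varphi_{n+1,\sIH}^{l}$ with $l\leq n<n+1$, i.e.\ strictly off-diagonal ones. By orthonormality of $\{\varphi_{n,\sIH}^{l}\}$, taking the inner product of $\mathcal{P}_{\sIH}f$ against any $\varphi_{k,\sIH}^{k}$ kills every term, which extends by continuity to arbitrary elements of $\ker\partial_{0}\cap\ker\bar\partial$, proving the required orthogonality.

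The only genuinely computational step is the maximization argument for $\lambda_{n,l}^{2}$ in (\textsc{i}); the main subtlety is making sure that the supremum is actually attained in the basis and not merely approached, but this is handled by exhibiting the explicit extremizer $f=\varphi_{0,\sIH}^{0}$. Parts (\textsc{ii}) and (\textsc{iii}) are essentially bookkeeping once the diagonal action of $\mathcal{P}_{\sIH}$ is in hand, so no substantial obstacle is expected there.
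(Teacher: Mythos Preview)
Your argument is correct. The paper itself does not supply a proof of this theorem---it is quoted from \cite{Bock2009,Bock2009a} with the remark that ``details and related proofs may be found'' there---so there is no in-text proof to compare against. That said, your approach (diagonal action on the orthonormal basis $\{\varphi_{n,\sIH}^{l}\}$, Parseval, and explicit maximization of the scalar factor $\lambda_{n,l}^{2}$) is exactly the natural one given the setup of Theorem~\ref{Theorem::CONS_IH-Properties} and Corollary~\ref{Equation::ParsevalsIdentity}, and it is almost certainly what the cited references do as well. Your optimization in (\textsc{i}) is clean; if you want to be fully rigorous you might add one line justifying that $\frac{2n+3}{(2n+5)(2n+2)}$ is decreasing in $n$ (e.g.\ write it as $m/(m^{2}+m-2)$ with $m=2n+3$ and differentiate), and in (\textsc{iii}) you might make explicit that the identification of $\ker\partial_{0}\cap\ker\bar\partial$ with the closed span of the $\varphi_{k,\sIH}^{k}$ uses termwise application of $\partial_{0}$ to the Fourier expansion together with the orthogonality of the resulting images $\varphi_{n-1,\sIH}^{l}$. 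Neither point is a gap, just polish.
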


Once again, let us expose the analogy to the complex theory. On the (orthogonal) basis $\left\{
z^{n} \right\}_{n\in\INo}$ in $L_{2}(\mathbb{B}^{+}_{2};\IC) \cap \ker \bar{\partial}_{z}$ we
define a holomorphic primitivation operator $\mathcal{P}_{z}$, such that for each basis element the
conditions
\begin{equation*}
\mathcal{P}_{z} z^{n}\,=\,\frac{z^{n+1}}{n+1} \quad \text{and} \quad
\partial_{z}\left[ \mathcal{P}_{z} z^{n} \right] = z^{n}
\end{equation*}
hold. Representing a square-integrable holomorphic function $f$ by its Fourier representation with
respect to the normalized basis $\left\{\tilde{z}^{n} \right\}_{n\in\INo}$ we extend the
primitivation operator to the whole space $L_{2}(\mathbb{B}^{+}_{2};\IC)\cap \ker
\bar{\partial}_{z}$ by
\begin{equation*}
\mathcal{P}_{z}
f\,=\,\sum_{n=0}^{\infty}\sqrt{\frac{1}{(n+1)(n+2)}}\,\tilde{z}^{n+1}\boldsymbol{\beta}_{n}.
\end{equation*}
Furthermore, one proves easily that the linear operator
\begin{equation*}
\mathcal{P}_{z} : L_{2}(\mathbb{B}^{+}_{2};\IC) \cap \ker \bar{\partial}_{z}\longrightarrow
L_{2}(\mathbb{B}^{+}_{2};\IC) \cap \ker \bar{\partial}_{z}
\end{equation*}
is bounded and its norm is $\|\mathcal{P}_{z}\|\,=\,\frac{\sqrt{2}}{2}$. It is also easy to see
that the holomorphic primitive $\mathcal{P}_{z} f$ is orthogonal to the subset of holomorphic
constants $L_{2}(\mathbb{B}^{+}_{2};\IC) \cap \ker \bar{\partial}_{z}$ which solely consists of a
complex constant $\boldsymbol{\beta}_{0} = \beta_{0}^{1} + i\beta_{0}^{2}$, where
$\beta_{0}^{1},\beta_{0}^{2}\in\IR$.

\subsection{An orthogonal Appell basis of inner solid spherical monogenics}

Remember now the structural scheme (see Figure \ref{Figure::OverviewSeriesC}) of the canonical
power series expansions in $\IC$ sketched in the beginning of this paper. As it was shown in the
preceding sections, the first branch regarding to the Fourier series expansions could be entirely
generalized to $\IH$ by preserving all the structural properties of the series expansion under
consideration. Let us now focus on the second branch regarding to the Taylor series expansions and
ask whether a series expansion can be constructed using the same basis (\ref{Equation::CONS_IH}).
One essential observation to answer this question is given by the fact that the $(n-l)$-fold
application of the hypercomplex derivative $\partial_{0}$ to an arbitrary basis function
$\varphi_{n,\sIH}^{l}$ always yields a monogenic constant (see, e.g., Figure
\ref{Figure::CONS_IH}). Note that this is similar to the complex case where the $n$-fold complex
derivative of the basis element $z^{n}$ results in a complex constant. As a consequence, one could
define (see \cite{Bock2009,Bock2009a}) another operator
\begin{equation*}
\bar{\partial}_{\s\IC} = \frac{1}{2}\left(\frac{\partial}{\partial x_{1}} +
\mathbf{e}_{3}\frac{\partial}{\partial x_{2}}\right)
\end{equation*}
which is exclusively acting on the set of monogenic constants and mapping a monogenic constant of
degree $n$ to a monogenic constant of degree $(n-1)$. Let us now look again at the complex Taylor
series. As one can easily determine leads the derivation of an arbitrary ansatz function
$\partial_{z} z^{n} = nz^{n-1}$. This special normalization of the ansatz functions is called
Appell property that was generalized already in 1880 by Appell \cite{Appell1880} to more general
polynomial systems, nowadays called Appell systems. He defined a system $\{P_n(x)\}_{n\in\mathbb
N}$ with the property that $\frac{d}{dx}P_n(x)=nP_{n-1}(x)$, $n=1,2,...$\,. This was later on
generalized by many authors in an extensive way. For the case of monogenic polynomials, this idea
was realized at first by Malonek et al. in the papers \cite{Falcao2006, Mal_Fa_2007,
Malonek_Falcao_2007, CacaoMalonek_2008}. The authors defined a special system of homogeneous
monogenic polynomials $P_k(x),\,x\in \mathbb R^n$ with the property that $\frac12\partial
P_n(x)=nP_{n-1}(x)$ and applied this idea to the definition of several elementary functions and the
calculation of combinatorial identities. Claiming now that for each application of the differential
operators $\partial_{0}$ and $\bar{\partial}_{\s\IC}$, respectively, the Appell property must hold
yields the following theorem:

\begin{theorem}[\cite{Bock2009,Bock2009a}]\label{Theorem::AppellSet}
The system of homogeneous monogenic polynomials $\bigl\{A_{n}^{l}:\,l=0,\ldots,n\bigr\}_{n\in\INo}
$, explicitly given by
\begin{equation}\label{Equation::AppellSet}
\left.
\begin{array}{lcl}
A_{n}^{0} & = & \D\frac{2}{n+1} X_{n,0}^{0,\dagger},\\[1ex]
A_{n}^{l} & = & \D\frac{ 2^{l+1}\,n!}{(n+l+1)!}\left( X_{n,0}^{l,\dagger} -
Y_{n,3}^{l,\dagger}\right),\;l=1,\ldots,n,
\end{array}\right\}
\end{equation}
is a complete orthogonal Appell set in $L_{2}(\IBp;\IH) \cap \ker \bar{\partial}$ such that for
each $n\in\IN$
\begin{equation*}
\partial_{0} A_{n}^{l} =\left\{
\begin{array}{ccl}
n\,A_{n-1}^{l} & : & l=0,\ldots,n-1\\
0 & : & l=n
\end{array} \right.
\end{equation*}
and
\begin{equation*}
\bar{\partial}_{\sIC} A_{n}^{n}  = n\,A_{n-1}^{n-1}.
\end{equation*}
\end{theorem}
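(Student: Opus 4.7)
The plan is to reduce everything to the orthonormal basis $\{\varphi_{n,\sIH}^{l}\}$ introduced in (\ref{Equation::CONS_IH}) and then to verify the Appell-type derivative relations. The first observation is that each polynomial $A_n^l$ defined in (\ref{Equation::AppellSet}) is a real scalar multiple of $\varphi_{n,\sIH}^l$. For $l=0$ this is clear, since $A_n^0$ is a rescaling of $X_{n,0}^{0,\dagger}$ whose normalization is precisely $\varphi_{n,\sIH}^0$. For $l\geq 1$, the equality $\|r^n X_n^l\|_{L^2(\sIBp;\sIH)} = \|r^n Y_n^l\|_{L^2(\sIBp;\sIH)}$ from (\ref{Equation::NormSolidSpherMonogenics}) implies that $X_{n,0}^{l,\dagger} - Y_{n,3}^{l,\dagger}$ differs from the bracket $\tilde{X}_{n,0}^{l,\dagger} - \tilde{Y}_{n,3}^{l,\dagger}$ in (\ref{Equation::CONS_IH}) only by a single positive real scalar. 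Writing $A_n^l = \lambda_n^l \, \varphi_{n,\sIH}^l$ with an explicit $\lambda_n^l > 0$, orthogonality of $\{A_n^l\}$ in $L^2(\IBp;\IH)\cap\ker\bar{\partial}$ and its completeness are immediate consequences of the corresponding properties of the orthonormal basis established earlier.

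For the Appell identity $\partial_0 A_n^l = n\, A_{n-1}^l$ with $l \leq n-1$, I apply paragraph (\textsc{i}) of Theorem~\ref{Theorem::CONS_IH-Properties} to obtain
\begin{equation*}
\partial_0 A_n^l \,=\, \lambda_n^l \sqrt{\tfrac{(2n+3)(n-l)(n+l+1)}{2n+1}}\,\varphi_{n-1,\sIH}^l \,=\, \tfrac{\lambda_n^l}{\lambda_{n-1}^l}\sqrt{\tfrac{(2n+3)(n-l)(n+l+1)}{2n+1}}\,A_{n-1}^l,
\end{equation*}
so the claim reduces to verifying that this scalar prefactor equals $n$. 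This is a routine algebraic identity involving the norms (\ref{Equation::NormSolidSpherMonogenics}), the constants $c_{n,-l}$, and the coefficients $\tfrac{2^{l+1} n!}{(n+l+1)!}$; in fact, the entire purpose of those coefficients is to make the square roots telescope to the integer $n$ after cancellation. For the diagonal case $l=n$, paragraph (\textsc{iii}) of the same theorem yields $\varphi_{n,\sIH}^n \in \ker \partial_0$, and hence $\partial_0 A_n^n = 0$ as claimed.

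The remaining identity $\bar{\partial}_{\sIC} A_n^n = n\, A_{n-1}^{n-1}$ is the main obstacle, because no previous result in the excerpt describes how the transversal operator $\bar{\partial}_{\sIC} = \frac{1}{2}(\partial_{x_1} + \mathbf{e}_3 \partial_{x_2})$ acts on the basis. My plan here is to substitute the explicit representations (\ref{Equation::MonogenicPoly_Xm})--(\ref{Equation::MonogenicPoly_Ym}) of $X_n^l$ and $Y_n^l$ into this operator, translate to spherical coordinates in the spirit of (\ref{Equation::conjugateCROperator-spherical}), and then simplify using the Legendre recurrences (\ref{Equation::LegendreRecurrence_I})--(\ref{Equation::LegendreRecurrence_ThreeTermFormula}) together with the algebraic relations (\ref{Equation::RelationABC_I})--(\ref{Equation::RelationABC_II}) among the coefficient functions $\A^{m,n}$, $\B^{m,n}$, $\C^{m,n}$. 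Structurally, $\bar{\partial}_{\sIC}$ restricted to the space of monogenic constants behaves like the ordinary complex Cauchy--Riemann operator on holomorphic polynomials in one variable, with $\mathbf{e}_3$ playing the role of $i$, so the $l=n$ Appell normalisation $\tfrac{2^{n+1} n!}{(2n+1)!}$ is forced by exactly the same telescoping that governs the classical powers $z^n/n!$. The resulting computation is mechanical but unilluminating; this is where the bulk of the technical work sits.
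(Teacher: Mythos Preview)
The paper does not actually prove this theorem here; it is quoted verbatim from \cite{Bock2009,Bock2009a} with no argument beyond the citation, so there is no in-text proof to compare against. What the paper does provide, immediately afterwards, is the explicit scalar relation~(\ref{Equation::Transformation_A_Phi}) expressing $A_n^l$ as a real multiple of $\varphi_{n,\sIH}^l$, which is exactly the reduction you carry out in your first two paragraphs. That part of your argument is correct and complete: orthogonality and completeness transfer from the orthonormal basis, and Theorem~\ref{Theorem::CONS_IH-Properties}(\textsc{i}),(\textsc{iii}) turns the identity $\partial_0 A_n^l = nA_{n-1}^l$ (resp.\ $=0$ for $l=n$) into a scalar bookkeeping exercise.

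Where you are working harder than necessary is the transversal identity $\bar\partial_{\sIC}A_n^n = nA_{n-1}^{n-1}$. Rather than expanding $X_n^n$, $Y_n^n$ in spherical coordinates and grinding through the Legendre recurrences (\ref{Equation::LegendreRecurrence_I})--(\ref{Equation::LegendreRecurrence_ThreeTermFormula}), the route taken in \cite{Bock2009,Bock2009a}, and alluded to later in this paper around formula~(\ref{Equation::RecurrenceAppell_Formel_III}), is to establish first the closed Cartesian form $A_n^n = (x_1 - x_2\mathbf{e}_3)^n =: \boldsymbol{\zeta}^n$. This is cheap once you already know from Theorem~\ref{Theorem::CONS_IH-Properties}(\textsc{iii}) that the homogeneous monogenic constants of degree $n$ form a one-dimensional right $\IH$-module: one checks directly that $\boldsymbol{\zeta}^n$ is a monogenic constant (using $\mathbf{e}_2\mathbf{e}_3=\mathbf{e}_1$) and then fixes the single remaining quaternionic constant by a normalisation. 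Since $\boldsymbol{\zeta}$ lives in the commutative subalgebra $\mathrm{span}_{\sIR}\{1,\mathbf{e}_3\}\cong\IC$ and $\bar\partial_{\sIC}\boldsymbol{\zeta}=1$, $\bar\partial_{\sIC}\bar{\boldsymbol{\zeta}}=0$, the ordinary power rule gives $\bar\partial_{\sIC}\boldsymbol{\zeta}^n = n\boldsymbol{\zeta}^{n-1}$ in one line. Your Legendre-function route would reach the same destination, but it hides the structural reason---that $\bar\partial_{\sIC}$ restricted to monogenic constants \emph{is} the complex derivative $\partial_{\bar z}$ in the variable $\boldsymbol{\zeta}$---under special-function identities, and it is the part of your write-up that is only a plan rather than an executed computation.
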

Figure \ref{Figure::AppellSet} illustrates the action of the differential operators on the
orthogonal basis.
\begin{figure}[htb]
\begin{center}
\includegraphics[scale=1.4,angle=0]{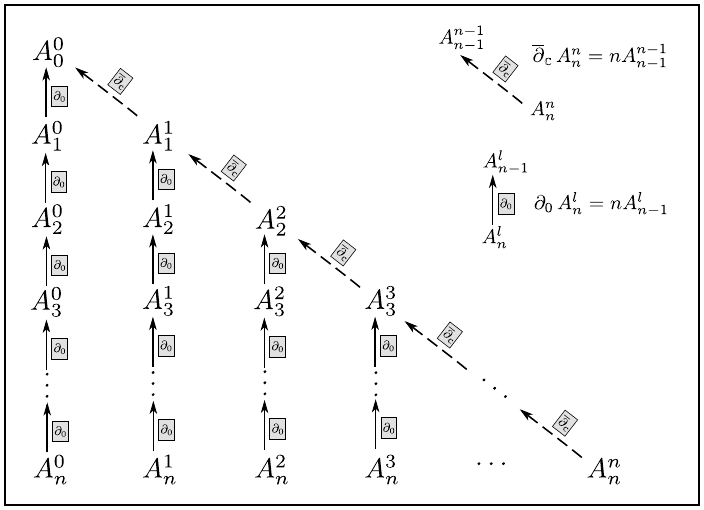}
\captionof{figure}{Action of the differential operators $\partial_{0}$ and $\bar{\partial}_{\sIC}$
on the Appell basis (\ref{Equation::AppellSet}).} \label{Figure::AppellSet}
\end{center}
\end{figure}
Precisely, the application of the hypercomplex derivative $\partial_{0}$ to an arbitrary Appell
polynomial $A_{n}^{l}$ causes a shifting of the degree in a fixed column $l$ whereas the
application of the operator $\bar{\partial}_{\sIC}$ causes a shifting of the degree as well as a
shifting of the column. Again, it should be emphasized that the action of the differential operator
$\bar{\partial}_{\sIC}$ is restricted to the set of monogenic constants and thus, referring to
Figure \ref{Figure::AppellSet}, is mapping along the upper diagonal. As a consequence of the afore
said, one can conclude that for an arbitrary Appell polynomial $A_{n}^{l}$, $l=0,\ldots,n$,
$n\in\INo$ of the system (\ref{Equation::AppellSet}) first the $(n-l)$-fold application of
$\partial_{0}$ and afterwards the $l$-fold application of $\bar{\partial}_{\sIC}$ yields
\begin{equation*}
\bar{\partial}_{\sIC}^{\,l}\,\partial_{0}^{n-l}\,A_{n}^{l} = n!.
\end{equation*}
This property essentially enables the definition of a new Taylor-type series expansion in terms of
the Appell set (\ref{Equation::AppellSet}) at first introduced in \cite{Bock2009,Bock2009a}.
\begin{definition}[Taylor-type series in $L_{2}(\IBp;\IH) \cap \ker \bar{\partial}$]
Let $f\in L_{2}(\IBp;\IH) \cap \ker \bar{\partial}$. The series representation
\begin{equation}\label{Equation::TaylorSeries}
f := \sum_{n=0}^{\infty}\sum_{l=0}^{n} A_{n}^{l} \mathbf{t}_{n,l},\quad\text{with}\quad
\mathbf{t}_{n,l} =
\frac{1}{n!}\,\bar{\partial}_{\sIC}^{\,l}\,\partial_{0}^{n-l}\,f(\mathbf{x})\,\Bigl|_{\mathbf{x}=\mathbf{0}}
\end{equation}
is called generalized Taylor-type series in $L_{2}(\IBp;\IH) \cap \ker \bar{\partial}$. The
operators $\partial_{0}^{0}$ and $\bar{\partial}_{\sIC}^{\,0}$ are identified with the
corresponding identity operators.
\end{definition}

Finally, let us draw some interesting consequences resulting from this novel Taylor-type series
(\ref{Equation::TaylorSeries}). Firstly, the Taylor series is also an orthogonal series expansions.
More precisely, all summands of the Taylor series form an orthogonal basis in
$L_{2}(\IBp;\IH)\cap\ker\bar{\partial}$, whereby the Taylor coefficients can be directly linked
with the coefficients of a associated Fourier series as for example in the complex case. Due to the
fact that each Appell polynomial is up to a real normalizing factor equivalent to a single
orthonormal basis polynomial (compare, e.g., equation (\ref{Equation::CONS_IH}) and equation
(\ref{Equation::AppellSet})\,) each Fourier coefficient (\ref{Equation::FourierSeries}) of a
function $f\in L_{2}(\IBp;\IH)\cap\ker\bar{\partial}$ can be expressed in terms of the
corresponding Taylor coefficient (\ref{Equation::TaylorSeries}) by the relation
\begin{equation}\label{Equation::Fourier-Taylor}
\boldsymbol{\alpha}_{n,l} \;=\; 2^{l+1}\,\sqrt{\frac{\pi}{(2n+3)\,(n-l)!\,(n+l+1)!}}\;
\bar{\partial}_{\sIC}^{\,l}\,\partial_{0}^{n-l}\,f(\mathbf{x})\,\Bigl|_{\mathbf{x}=\mathbf{0}},
\end{equation}
where $l=0,\ldots,n$ and $n\in\INo$. Secondly, we directly conclude from the equation
(\ref{Equation::Fourier-Taylor}) the transformation of an arbitrary element of the $\IH$-linear
orthonormal system (\ref{Equation::CONS_IH}) into an element of the Appell basis
(\ref{Equation::AppellSet}), that is
\begin{equation}\label{Equation::Transformation_A_Phi}
A_{n}^{l} \;=\; 2^{l+1}\,n!\,\sqrt{\frac{\pi}{(2n+3)\,(n-l)!\,(n+l+1)!}}\;\varphi_{n,\sIH}^{l},\;\;
l=0,\ldots,n,\;n\in\INo.
\end{equation}
For a detailed proof of the relations (\ref{Equation::Fourier-Taylor}) and
(\ref{Equation::Transformation_A_Phi}), see \cite{Bock2009,Bock2009a}.

Obviously, one can show the analogy to the complex theory also for this result. For a function $f
\in L^{2}(\mathbb{B}^{+}_{2};\IC) \cap \ker \bar{\partial}_{z}$, the relations between the complex
Taylor- and Fourier coefficients are given by
\begin{equation*}
\boldsymbol{\beta}_{n} \,=\, \left.\frac{1}{n!}\sqrt{\frac{\pi}{n+1}}\,\partial_{z}^{n}
f(z)\right|_{z=\mathbf{0}}.
\end{equation*}
Restricting ourselves to the special case $l=0$, equality (\ref{Equation::Fourier-Taylor}) reduces
to
\begin{equation*}
\boldsymbol{\alpha}_{n,0} \;=\; \frac{2}{n!}\,\sqrt{\frac{\pi}{(2n+3)\,(n+1)}}\;
\partial_{0}^{n}\,f(\mathbf{x})\,\Bigl|_{\mathbf{x}=\mathbf{0}},
\end{equation*}
which is quite similar to the complex relation.


\section{Recurrence formulae}

In view of the practical applicability and an efficient implementation of the orthogonal bases
introduced in the previous section we are also interested in recurrence relations between the
elements of the bases. Such recurrence relations become also very advantageous to prove structural
properties of orthonormal bases of outer spherical monogenics studied later on in this article
(q.v. Section 6). Thus, preparing the upcoming calculations we first provide an explicit
representation of the Appell basis (\ref{Equation::AppellSet}) in spherical coordinates. With
(\ref{Equation::MonogenicPoly_Xm}), we directly obtain for the case $l=0$ the relation
\begin{equation}\label{Equation::RecurrenceAppell_Spher_An0}
r^{n}\,A_{n}^{0}(\boldsymbol\omega)\;=\;\frac{2\,r^{n}}{n+1}\,\left[\A^{0,{n}}+\B^{0,{n}}\cos{\varphi}\,\textbf{e}_1+\B^{0,{n}}\sin{\varphi}\,\textbf{e}_2\right].
\end{equation}
Consider now the basis elements $A_{n}^{l}$, $l=1,\ldots,n$ and determine their spherical part.
With (\ref{Equation::MonogenicPoly_Xm}) and (\ref{Equation::MonogenicPoly_Ym}) we get
\begin{align*}
X_{n,0}^{l} - Y_{n,3}^{l} =\;&\; \A^{l,{n}}\cos l\varphi + \left[ (\B^{l,{n}} + \C^{l,{n}})(
\cos\varphi\cos l\varphi - \sin\varphi\sin l\varphi ) \right]\textbf{e}_{1}\\
&\; + \left[ (\B^{l,{n}} + \C^{l,{n}})(\cos\varphi\sin l\varphi + \sin\varphi\cos l\varphi )
\right]\textbf{e}_{2} - \A^{l,{n}}\sin l\varphi\,\textbf{e}_{3}\\
=\;&\; \A^{l,{n}}\bigl[\cos l\varphi - \sin l\varphi\,\textbf{e}_{3}\bigr] + (\B^{l,{n}} +
\C^{l,{n}})\bigl[( \cos\varphi\cos l\varphi - \sin\varphi\sin
l\varphi)\textbf{e}_{1}\\
&\;+ (\cos\varphi\sin l\varphi + \sin\varphi\cos l\varphi )\textbf{e}_{2} \bigr].
\end{align*}
For a fixed $n\in\INo$ and $l=0,\ldots,n$, we finally obtain by substituting the relation
(\ref{Equation::RelationABC_II}) into the equation (\ref{Equation::RecurrenceAppell_Spher_An0}) as
well as in the latter equation the general representation
\begin{align}
r^{n}\,A_{n}^{l}(\boldsymbol\omega) \;=\; & \frac{2^{l+1}\,n!\,r^{n}}{(n+l+2)!}\Bigl[ (n+l+2)
\A^{l,{n}}\bigl(\cos l\varphi - \sin l\varphi\,\textbf{e}_{3}\bigr)\nonumber\\
&\hspace{2.5cm} + \A^{l+1,{n}}\bigl(( \cos\varphi\cos l\varphi - \sin\varphi\sin l\varphi )\textbf{e}_{1}\label{Equation::RecurrenceAppell_Spher_Anl}\\
&\hspace{4.3cm}  + (\cos\varphi\sin l\varphi + \sin\varphi\cos l\varphi )\textbf{e}_{2}\nonumber
\bigr)\Bigr],
\end{align}
where the functions $\A^{l,n}(\theta)$ are explicitly given by (\ref{Equation::HomogMonogPoly_Am}).
The main result of this section is subject of the following theorem.\vspace{0.3cm}
\begin{theorem}
For each $n\in\INo$ the elements of the monogenic Appell basis (\ref{Equation::AppellSet}) satisfy
the recurrence formulae
\begin{align}
\mathbf{x}\,A_{n}^{l} & \;=\; \frac{1}{2(n+1)}\left[ (2n+3)\,A_{n+1}^{l} -
(2l+1)\,\widehat{A_{n+1}^{l}} \right],\label{Equation::RecurrenceAppell_Formel_I}\\[1ex]
A_{n+1}^{l} & \;=\; \frac{n+1}{2(n-l+1)(n+l+2)}\left[ (2n+3)\,\mathbf{x}\,A_{n}^{l} +
(2l+1)\,\overline{\mathbf{x}}\,\widehat{A_{n}^{l}}
\right],\label{Equation::RecurrenceAppell_Formel_II}
\end{align}
with $l=0,\ldots,n$. The corresponding anti-monogenic function $\widehat{A_{n}^{l}}$ to the Appell
polynomial $A_{n}^{l}$ is defined by Corollary
\ref{Corollary::Anti-MonogenicFunction-Construction}.
\end{theorem}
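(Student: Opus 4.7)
The plan is to reduce the second identity (\ref{Equation::RecurrenceAppell_Formel_II}) to the first (\ref{Equation::RecurrenceAppell_Formel_I}) by a purely algebraic manipulation, and then to establish (\ref{Equation::RecurrenceAppell_Formel_I}) by a direct verification in spherical coordinates using the representation (\ref{Equation::RecurrenceAppell_Spher_Anl}). A short computation shows that $\widehat{f} = -\mathbf{e}_{3}\,f\,\mathbf{e}_{3}$ for every $\IH$-valued function $f$, and the identity $\mathbf{e}_{3}^{2}=-1$ immediately gives $\widehat{fg}=\widehat{f}\,\widehat{g}$; since $\mathbf{x}$ is a reduced quaternion, $\widehat{\mathbf{x}}=\overline{\mathbf{x}}$, and hence $\widehat{\mathbf{x}\,A_{n}^{l}} = \overline{\mathbf{x}}\,\widehat{A_{n}^{l}}$. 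Applying the hat to (\ref{Equation::RecurrenceAppell_Formel_I}) therefore yields the companion identity
\begin{equation*}
2(n+1)\,\overline{\mathbf{x}}\,\widehat{A_{n}^{l}} \;=\; (2n+3)\,\widehat{A_{n+1}^{l}} \;-\; (2l+1)\,A_{n+1}^{l}.
\end{equation*}
Forming $(2n+3)\cdot(\ref{Equation::RecurrenceAppell_Formel_I})+(2l+1)\cdot(\text{companion})$ and using $(2n+3)^{2}-(2l+1)^{2}=4(n-l+1)(n+l+2)$ produces exactly (\ref{Equation::RecurrenceAppell_Formel_II}). Thus only (\ref{Equation::RecurrenceAppell_Formel_I}) needs to be proved.

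For the latter, the angle-sum identities $\cos\varphi\cos l\varphi-\sin\varphi\sin l\varphi=\cos(l+1)\varphi$ and $\cos\varphi\sin l\varphi+\sin\varphi\cos l\varphi=\sin(l+1)\varphi$ let me rewrite (\ref{Equation::RecurrenceAppell_Spher_Anl}) as a sum over four trigonometric ``channels'' $\cos l\varphi$, $\sin l\varphi$, $\cos(l+1)\varphi$, $\sin(l+1)\varphi$, whose $\theta$-coefficients involve only $\A^{l,n}$ and $\A^{l+1,n}$; the anti-monogenic companion $\widehat{A_{n+1}^{l}}$ has the same expansion at level $n+1$ with the $\mathbf{e}_{1}$- and $\mathbf{e}_{2}$-channels reversed in sign. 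Writing $\mathbf{x}=r\cos\theta+r\sin\theta\,\cos\varphi\,\mathbf{e}_{1}+r\sin\theta\,\sin\varphi\,\mathbf{e}_{2}$ and carrying out the quaternionic multiplication $\mathbf{x}\,A_{n}^{l}$, the products of type $\cos\varphi\cdot\cos(l+1)\varphi$, $\sin\varphi\cdot\cos l\varphi$, etc.\ collapse via product-to-sum identities back into the same four channels, now with $\theta$-coefficients linear in $\cos\theta\,\A^{l,n}$, $\cos\theta\,\A^{l+1,n}$, $\sin\theta\,\A^{l,n}$, and $\sin\theta\,\A^{l+1,n}$. Matching channel by channel against the explicit spherical expansion of the right-hand side of (\ref{Equation::RecurrenceAppell_Formel_I}) reduces the whole theorem to two scalar identities in $\theta$.

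The main obstacle is the verification of these two scalar identities: after substituting the definition (\ref{Equation::HomogMonogPoly_Am}) they become algebraic relations between $\cos\theta\,P_{n+1}^{l}(\cos\theta)$, $\sin\theta\,P_{n+1}^{l+1}(\cos\theta)$ and their derivatives, on the one hand, and the analogous Legendre expressions at levels $n$ and $n+2$, on the other. The two-step formula (\ref{Equation::LegendreRecurrence_ThreeTermFormula}) is used to eliminate $\cos\theta\,P_{n+1}^{l}$ in favour of $P_{n}^{l}$ and $P_{n+2}^{l}$, while (\ref{Equation::LegendreRecurrence_III}) rewrites $\sin\theta\,P_{n+1}^{l+1}$ in terms of $P_{n}^{l+2}$ and $P_{n+2}^{l+2}$; (\ref{Equation::LegendreRecurrence_I}) and (\ref{Equation::LegendreRecurrence_II}) are needed to handle the derivative parts of $\A^{l,n}$. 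Conceptually this step is mere linear algebra among $P_{\bullet}^{\bullet}(\cos\theta)$, but keeping track of the ratios of the normalising factors $\tfrac{2^{l+1}n!}{(n+l+2)!}$ at level $n$ versus $\tfrac{2^{l+1}(n+1)!}{(n+l+3)!}$ at level $n+1$, together with the weights $(2n+3)$, $(2l+1)$ and $2(n+1)$ appearing in (\ref{Equation::RecurrenceAppell_Formel_I}), is the bookkeeping bottleneck; the required cancellations are produced precisely by these factors.
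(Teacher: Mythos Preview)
Your proposal is correct and follows precisely the route that the paper prepares: the spherical representation (\ref{Equation::RecurrenceAppell_Spher_Anl}) together with the Legendre recurrences (\ref{Equation::LegendreRecurrence_I})--(\ref{Equation::LegendreRecurrence_ThreeTermFormula}) is exactly the toolkit set up ``preparing the upcoming calculations'' just before the theorem, and the paper defers its own detailed verification to \cite{Bock2010a}. Your reduction of (\ref{Equation::RecurrenceAppell_Formel_II}) to (\ref{Equation::RecurrenceAppell_Formel_I}) via the algebraic identity $\widehat{f}=-\mathbf{e}_{3}\,f\,\mathbf{e}_{3}$ (hence $\widehat{fg}=\widehat{f}\,\widehat{g}$ and $\widehat{\mathbf{x}}=\overline{\mathbf{x}}$) is clean and correct; the factorisation $(2n+3)^{2}-(2l+1)^{2}=4(n-l+1)(n+l+2)$ is exactly what is needed.

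One remark that would sharpen the final ``bookkeeping bottleneck'': if you actually substitute (\ref{Equation::LegendreRecurrence_I}) into the definition (\ref{Equation::HomogMonogPoly_Am}) you obtain the closed form $\A^{l,n}(\theta)=\tfrac{n+l+1}{2}\,P_{n}^{l}(\cos\theta)$, so that your two scalar $\theta$-identities collapse to
\[
(n+l+1)\cos\theta\,P_{n}^{l}-\sin\theta\,P_{n}^{l+1}=(n-l+1)\,P_{n+1}^{l},
\qquad
(n+l+1)\sin\theta\,P_{n}^{l}+\cos\theta\,P_{n}^{l+1}=P_{n+1}^{l+1},
\]
which are classical associated-Legendre relations obtainable from (\ref{Equation::LegendreRecurrence_II})--(\ref{Equation::LegendreRecurrence_ThreeTermFormula}). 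With this observation the ``mere linear algebra'' step is short rather than tedious; your sketch is otherwise complete.
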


\begin{proof} \textbf{The complete proof can be found in \cite{Bock2010a}.} \end{proof}

Finally, let us draw some further conclusions of the constructed one-step recurrence formulae
(\ref{Equation::RecurrenceAppell_Formel_I}) and (\ref{Equation::RecurrenceAppell_Formel_II}).
Firstly, the recurrence formulae relate Appell polynomials of different degree $n$ however the
index $l$ is fixed. Referring to Figure \ref{Figure::AppellSet}, this structurally means that the
elements of the $(l+1)$-th column are recursively generated by the initial elements $A_{l}^{l}$
which are in fact belonging to the subset of monogenic constants. In \cite{Bock2009,Bock2009a} it
was further proved that these constants possess the simple (cartesian) representation
$A_{l}^{l}\;=\; (x_{1} - x_{2}\mathbf{e}_3)^{l}$, $l\in\INo$ and thus are isomorphic to the
anti-holomorphic $\bar{z}$-powers of the complex one-dimensional case. Secondly, a straightforward
computation shows that a combination of the recurrence formulae
(\ref{Equation::RecurrenceAppell_Formel_I}) and (\ref{Equation::RecurrenceAppell_Formel_II}) yields
a two-step recurrence formula using solely $\IH$-holomorphic basis elements.
\begin{corollary}
For each $n\in\IN$ and $l=0,\ldots,n$ the elements of the monogenic Appell basis
(\ref{Equation::AppellSet}) satisfy the two-step recurrence formula
\begin{equation}\label{Equation::RecurrenceAppell_Formel_III}
A_{n+1}^{l} = \frac{n+1}{2(n-l+1)(n+l+2)}\left[ \Bigl((2n+3)\mathbf{x} +
(2n+1)\overline{\mathbf{x}}\Bigr)A_{n}^{l} - 2n\,\mathbf{x}\overline{\mathbf{x}}\,A_{n-1}^{l}
\right]
\end{equation}
with
\begin{equation*}
A_{l+1}^{l} \;=\; \frac{1}{4}\bigl[ (2l+3)\mathbf{x} +
(2l+1)\overline{\mathbf{x}}\bigr]\,A_{l}^{l}\quad\text{and}\quad
A_{l}^{l} \;=\; (x_{1} - x_{2}\mathbf{e}_{3})^{l}\,.
\end{equation*}
\end{corollary}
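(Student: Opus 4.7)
The plan is to derive the two-step recurrence purely algebraically, by combining the two one-step recurrences (\ref{Equation::RecurrenceAppell_Formel_I}) and (\ref{Equation::RecurrenceAppell_Formel_II}) from the preceding theorem and eliminating the anti-monogenic partner $\widehat{A_{n}^{l}}$. The key observation is that $\widehat{A_{n}^{l}}$ appears on the right-hand side of (\ref{Equation::RecurrenceAppell_Formel_II}), while (\ref{Equation::RecurrenceAppell_Formel_I}) provides an expression relating $\widehat{A_{n+1}^{l}}$ to $A_{n+1}^{l}$ and $\mathbf{x}A_{n}^{l}$. By simply shifting the index in (\ref{Equation::RecurrenceAppell_Formel_I}) from $n$ to $n-1$, one obtains
\begin{equation*}
(2l+1)\,\widehat{A_{n}^{l}} \;=\; (2n+1)\,A_{n}^{l} - 2n\,\mathbf{x}\,A_{n-1}^{l},
\end{equation*}
which after substitution into (\ref{Equation::RecurrenceAppell_Formel_II}) yields the desired relation, provided one uses that for a reduced quaternion $\mathbf{x}=x_{0}+x_{1}\mathbf{e}_{1}+x_{2}\mathbf{e}_{2}$ one has $\overline{\mathbf{x}}\,\mathbf{x} = \mathbf{x}\,\overline{\mathbf{x}} = |\mathbf{x}|^{2}\in\IR$, so that the two orderings of $\mathbf{x}\overline{\mathbf{x}}$ appearing in the manipulation commute freely.

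The first initial condition $A_{l}^{l}=(x_{1}-x_{2}\mathbf{e}_{3})^{l}$ is quoted directly from \cite{Bock2009,Bock2009a}; nothing needs to be reproved. For the second initial condition $A_{l+1}^{l}$, I would specialize (\ref{Equation::RecurrenceAppell_Formel_II}) to $n=l$. The prefactor then simplifies as
\begin{equation*}
\frac{n+1}{2(n-l+1)(n+l+2)}\Bigl|_{n=l} \;=\; \frac{l+1}{2\cdot 1 \cdot (2l+2)} \;=\; \frac{1}{4}.
\end{equation*}
To conclude, I need $\widehat{A_{l}^{l}} = A_{l}^{l}$: this is immediate from Corollary \ref{Corollary::Anti-MonogenicFunction-Construction}, since $(x_{1}-x_{2}\mathbf{e}_{3})^{l}$ lies in the commutative subalgebra $\mathrm{span}_{\IR}(\mathbf{e}_{0},\mathbf{e}_{3})$ and therefore has vanishing $\mathbf{e}_{1}$- and $\mathbf{e}_{2}$-components; the involution $f \mapsto \widehat{f}$ fixes exactly such elements. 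Combining these observations gives precisely $A_{l+1}^{l}=\tfrac{1}{4}[(2l+3)\mathbf{x}+(2l+1)\overline{\mathbf{x}}]A_{l}^{l}$.

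Two minor points merit a remark. First, for $l=n$ the formula (\ref{Equation::RecurrenceAppell_Formel_III}) involves the symbol $A_{n-1}^{n}$, which is not an element of the basis; it is to be interpreted as zero, in which case the formula reduces exactly to the base case. Second, no analytic or function-theoretic argument is required beyond what is already packaged in Theorem (the one above) and Corollary \ref{Corollary::Anti-MonogenicFunction-Construction}: the entire proof is a finite algebraic computation. There is, in fact, no real obstacle; the only place where one has to be careful is bookkeeping of the scalar prefactor and of the order of multiplication of the quaternionic coefficients (which is why the commutativity of $\mathbf{x}$ and $\overline{\mathbf{x}}$ modulo a real scalar is the essential simplifying fact).
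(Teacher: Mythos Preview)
Your proposal is correct and follows exactly the route indicated in the paper: the corollary is obtained by a straightforward algebraic combination of (\ref{Equation::RecurrenceAppell_Formel_I}) and (\ref{Equation::RecurrenceAppell_Formel_II}), eliminating $\widehat{A_{n}^{l}}$ via the index shift $n\mapsto n-1$ in (\ref{Equation::RecurrenceAppell_Formel_I}), together with $\overline{\mathbf{x}}\,\mathbf{x}=\mathbf{x}\,\overline{\mathbf{x}}$. Your verification of the starting values by specializing (\ref{Equation::RecurrenceAppell_Formel_II}) to $n=l$ and noting that $\widehat{A_{l}^{l}}=A_{l}^{l}$ (since $(x_{1}-x_{2}\mathbf{e}_{3})^{l}$ has no $\mathbf{e}_{1}$- or $\mathbf{e}_{2}$-component) is entirely in line with the paper's treatment.
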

Thirdly, the recurrence formulae for the elements of the orthonormal basis
(\ref{Equation::CONS_IH}) are directly obtained by applying the transformation
(\ref{Equation::Transformation_A_Phi}) to formulae (\ref{Equation::RecurrenceAppell_Formel_I}),
(\ref{Equation::RecurrenceAppell_Formel_II}) and (\ref{Equation::RecurrenceAppell_Formel_III}),
respectively. Accordingly, the orthogonal bases (\ref{Equation::CONS_IH}) and
(\ref{Equation::AppellSet}) can now be generated in an very efficient and direct way whereby the
extensive construction process based on the set of inner solid spherical monogenics
(\ref{Equation::SolidSpherMonogenics}) becomes redundant.

\section{Closed-form representations}

In this section the preceding results are applied to construct a closed-form representation for
each element of the orthogonal bases. This idea is motivated by the observation that, for a fixed
index $l\in\INo$, each basis polynomial $A_{n}^{l}$, $n\geq l$ of the $(l+1)$-th column (see Figure
\ref{Figure::AppellSet}) is related to the monogenic constant $A_{l}^{l}$. Hence, it is natural to
ask whether each element of the Appell basis (\ref{Equation::AppellSet}) can be factorized into a
monogenic constant and a polynomial $\IH$-valued rest term.

\begin{theorem}
For each $n\in\IN$ and $l=0,\ldots,n$ the elements of the monogenic Appell basis
(\ref{Equation::AppellSet}) possess the factorization
\begin{equation}\label{Equation:ClosedForm}
A_{n}^{l}=\frac{n!\,l!}{2^{2(n-l)} (n+l+1)!(2l)!}\left[
\sum_{h=0}^{n-l}\frac{(2n-2h+1)!(2l+2h)!}{h!(n-l-h)!(n-h)!(l+h)!}\overline{\mathbf{x}}^{h}\mathbf{x}^{n-l-h}
\right]A_{l}^{l}
\end{equation}
with $A_{l}^{l} \;=\; (x_{1} - x_{2}\mathbf{e}_{3})^{l}$.
\end{theorem}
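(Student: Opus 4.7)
The plan is to prove the factorization by induction on $n$ for each fixed $l\in\INo$, starting from the two-step recurrence (\ref{Equation::RecurrenceAppell_Formel_III}) and using the initial data $A_{l}^{l}=(x_{1}-x_{2}\mathbf{e}_{3})^{l}$ and $A_{l+1}^{l}=\tfrac{1}{4}\bigl[(2l+3)\mathbf{x}+(2l+1)\overline{\mathbf{x}}\bigr]A_{l}^{l}$ as the two base cases. The key structural observation is that $\mathbf{x}\overline{\mathbf{x}}=|\mathbf{x}|^{2}\in\IR$, so $\mathbf{x}$ and $\overline{\mathbf{x}}$ commute, the mixed monomials $\{\overline{\mathbf{x}}^{h}\mathbf{x}^{n-l-h}\}_{h=0}^{n-l}$ lie in a commutative $\IR$-subalgebra of $\IH$ and are linearly independent over $\IR$. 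Since $A_{l}^{l}$ appears as a common right factor in the claim, the identity reduces to matching the real coefficient of each $\overline{\mathbf{x}}^{h}\mathbf{x}^{n-l-h}$.

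First I would verify the two base cases directly. For $n=l$ the sum in (\ref{Equation:ClosedForm}) collapses to the single term $h=0$, and the combined prefactor simplifies to $1$, reproducing $A_{l}^{l}$. For $n=l+1$ the sum has two terms $h=0,1$; direct evaluation reproduces the overall factor $\tfrac{1}{4}$ and the correct linear combination of $\mathbf{x}$ and $\overline{\mathbf{x}}$ given by the initial datum for $A_{l+1}^{l}$.

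For the inductive step, suppose (\ref{Equation:ClosedForm}) holds at $n-1$ and $n$, denote the claimed scalar coefficients by $\gamma_{n,l,h}$, and substitute into (\ref{Equation::RecurrenceAppell_Formel_III}). Using commutativity and the shifts $\mathbf{x}\cdot\overline{\mathbf{x}}^{h}\mathbf{x}^{n-l-h}=\overline{\mathbf{x}}^{h}\mathbf{x}^{n+1-l-h}$, $\overline{\mathbf{x}}\cdot\overline{\mathbf{x}}^{h-1}\mathbf{x}^{n-l-h+1}=\overline{\mathbf{x}}^{h}\mathbf{x}^{n+1-l-h}$ and $\mathbf{x}\overline{\mathbf{x}}\cdot\overline{\mathbf{x}}^{h-1}\mathbf{x}^{n-l-h}=\overline{\mathbf{x}}^{h}\mathbf{x}^{n+1-l-h}$, the coefficient of $\overline{\mathbf{x}}^{h}\mathbf{x}^{n+1-l-h}$ on the right-hand side becomes $\tfrac{n+1}{2(n-l+1)(n+l+2)}\bigl[(2n+3)\gamma_{n,l,h}+(2n+1)\gamma_{n,l,h-1}-2n\,\gamma_{n-1,l,h-1}\bigr]$, which must equal $\gamma_{n+1,l,h}$. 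This is a three-term identity in the factorial coefficients of the proposed formula.

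The main obstacle is the verification of this factorial identity, which is purely combinatorial but bookkeeping-intensive because every one of the six factorials $(2n-2h+1)!$, $(2l+2h)!$, $h!$, $(n-l-h)!$, $(n-h)!$, $(l+h)!$ shifts in a slightly different way between the three involved triples $(n,h)$, $(n,h-1)$, $(n-1,h-1)$ and $(n+1,h)$. After pulling out the largest common factorial piece and the common power of $2$, the relation collapses to a polynomial identity of degree two in $n$ and $h$ (with $l$ appearing only as a parameter) that can be checked by direct expansion. The boundary cases $h=0$ and $h=n+1-l$ require separate inspection, since there either $\gamma_{n,l,h-1}$ and $\gamma_{n-1,l,h-1}$ or $\gamma_{n,l,h}$ are absent, but they are consistent with the convention $\gamma_{n,l,h}=0$ outside $0\le h\le n-l$ and with the leading and trailing coefficients of $A_{n+1}^{l}$.
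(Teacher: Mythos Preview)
Your induction on $n$ via the two-step recurrence (\ref{Equation::RecurrenceAppell_Formel_III}) is correct and is precisely the route the paper intends: the paper defers the full argument to \cite{Bock2010a}, but the opening sentence of Section~5 states explicitly that ``the preceding results [i.e.\ the recurrence formulae of Section~4] are applied to construct a closed-form representation'', and your observation that $\mathbf{x}$ and $\overline{\mathbf{x}}$ commute (so the right factor $A_{l}^{l}$ can be carried through and the problem reduces to matching real coefficients of the linearly independent monomials $\overline{\mathbf{x}}^{h}\mathbf{x}^{n-l-h}$) is exactly the mechanism that makes this work. The base cases and the three-term factorial identity you isolate for the inductive step are the right objects; the boundary cases $h=0$ and $h=n+1-l$ are indeed handled by the convention $\gamma_{n,l,h}=0$ outside $0\le h\le n-l$, so nothing is missing.
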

\begin{proof}
\textbf{The complete proof can be found in \cite{Bock2010a}.}
\end{proof}

Note that already in \cite{Mal_Fa_2007} a factorization of special monogenic polynomials in terms
of $\mathbf{x}$ and $\bar{\mathbf{x}}$ powers was obtained which coincides with the subset of
(\ref{Equation:ClosedForm}) for the special case $l=0$ and thus $A_{0}^{0}=1$. These polynomials,
as it was proved in \cite{NGuerlebeck2008}, are the Fueter-Sce extensions of the complex monomials
$z^{n}$.


\section{An $\IH$-linear orthonormal basis of outer solid spherical monogenics}

For the generation of complete systems of outer spherical functions the subsequent transformation
is applied.
\begin{definition}[Kelvin transformation in $\IH$]
Let $P_{n}\in\mathcal{M}_{n}^{+}$. The bijective mapping $\mathcal{K} : \mathcal{M}_{n}^{+}
\longrightarrow \mathcal{M}_{n}^{-}$, given by
\begin{equation}\label{V::Formel:HomogMonogFunktionen_KelvinTrans}
Q_{n}(\mathbf{x}) = \mathcal{K}\left( P_{n} \right) =
\frac{\overline{\mathbf{x}}}{|\mathbf{x}|^{3}}\,P_{n}\left(\frac{\overline{\mathbf{x}}}{|\mathbf{x}|^{2}}\right),
\end{equation}
is called \textit{Kelvin transformation in $\IH$}.
\end{definition}
Using this approach is mainly motivated by the fact that for a fixed $n\in\INo$ the complex Kelvin
transformation of the inner polynomial $z^n$ directly yields the corresponding outer function
$z^{-(n+1)}$. For this reason it is natural to ask whether the application of the hypercomplex
Kelvin transformation to the constructed bases (\ref{Equation::CONS_IH}) and
(\ref{Equation::AppellSet}) lead again to complete systems in $\mathbb{B}_{3}^{-}$ and which
properties of the systems are preserved in this process.

First we study the application of the Kelvin transformation
(\ref{V::Formel:HomogMonogFunktionen_KelvinTrans}) to the elements of the orthonormal basis
(\ref{Equation::CONS_IH}).
\begin{theorem}\label{V::Theorem:HomogMonogFunktionen_CONS_IH}
Let $\left\{\varphi_{n,\sIH}^{l} : l = 0,\ldots,n\right\}_{n\in\INo}$ be the $\IH$-linear
orthonormal basis (\ref{Equation::CONS_IH}). For each $n\in \INo$ the homogeneous $\IH$-holomorphic
functions
\begin{equation}\label{V::Formel:HomogMonogFunktionen_CONS_IH}
\varphi_{-(n+2),\sIH}^{l} :=
\sqrt{\frac{2n+1}{2n+3}}\,\mathcal{K}\left(\varphi_{n,\sIH}^{l}\right),\quad l=0,\ldots,n
\end{equation}
with degree of homogeneity $-(n+2)$ are orthonormal in $L^{2}(\mathbb{B}_{3}^{-};\IH) \cap \ker
\bar{\partial}$.
\end{theorem}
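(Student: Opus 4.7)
The plan is to verify directly that the claimed normalization constant makes $\varphi_{-(n+2),\sIH}^{l}$ an orthonormal family in $L^{2}(\IBm;\IH)\cap\ker\bar\partial$ by reducing the $\IBm$-inner product to an $\IBp$-inner product via the inversion $\mathbf{x}\mapsto\mathbf{y}:=\overline{\mathbf{x}}/|\mathbf{x}|^{2}$. The Kelvin transform (\ref{V::Formel:HomogMonogFunktionen_KelvinTrans}) is well known to map $\mathcal{M}_{n}^{+}$ into $\mathcal{M}_{n}^{-}$, so monogenicity and the prescribed degree of homogeneity $-(n+2)$ are inherited automatically; the only work is to compute the inner product.

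The first computational step is to observe that, pointwise,
\begin{equation*}
\overline{\mathcal{K}(P_{m})(\mathbf{x})}\,\mathcal{K}(P_{n})(\mathbf{x})
\;=\;\overline{P_{m}(\mathbf{y})}\,\frac{\mathbf{x}}{|\mathbf{x}|^{3}}\cdot\frac{\overline{\mathbf{x}}}{|\mathbf{x}|^{3}}\,P_{n}(\mathbf{y})
\;=\;\frac{1}{|\mathbf{x}|^{4}}\,\overline{P_{m}(\mathbf{y})}\,P_{n}(\mathbf{y}),
\end{equation*}
where $\mathbf{x}\overline{\mathbf{x}}=|\mathbf{x}|^{2}$ is a real scalar and hence commutes through the quaternionic product. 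The inversion $\mathbf{x}\mapsto\mathbf{y}$ is a conformal involution of $\IR^{3}\setminus\{\mathbf{0}\}$ mapping $\IBm$ onto $\IBp$ with Jacobian modulus $1/|\mathbf{x}|^{6}=|\mathbf{y}|^{6}$, so $dV(\mathbf{x})=|\mathbf{y}|^{-6}\,dV(\mathbf{y})$. Substituting into the defining integral and setting $P_{k}:=\varphi_{k,\sIH}^{l_{k}}$, which is homogeneous of degree $k$, one obtains
\begin{equation*}
\int_{\sIBm}\overline{\mathcal{K}(P_{m})}\,\mathcal{K}(P_{n})\,dV
\;=\;\int_{\sIBp}\frac{1}{|\mathbf{y}|^{2}}\,\overline{P_{m}(\mathbf{y})}\,P_{n}(\mathbf{y})\,dV(\mathbf{y}),
\end{equation*}
which in spherical coordinates $\mathbf{y}=r\boldsymbol{\omega}$ separates into $\bigl(\int_{0}^{1}r^{m+n}\,dr\bigr)\cdot\bigl(\int_{S^{2}}\overline{P_{m}(\boldsymbol{\omega})}\,P_{n}(\boldsymbol{\omega})\,d\sigma\bigr)$.

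In the second step I would exploit that the same separation applied to the original $L^{2}(\IBp;\IH)$-inner product gives $\frac{1}{m+n+3}\int_{S^{2}}\overline{P_{m}(\boldsymbol{\omega})}\,P_{n}(\boldsymbol{\omega})\,d\sigma$, while the transformed integral yields the angular part weighted by $\frac{1}{m+n+1}$. Consequently, orthogonality on the sphere, which is a consequence of the orthonormality of $\{\varphi_{n,\sIH}^{l}\}$ in $L^{2}(\IBp;\IH)\cap\ker\bar\partial$, immediately forces orthogonality in $L^{2}(\IBm;\IH)\cap\ker\bar\partial$ whenever $(m,l_{m})\neq(n,l_{n})$. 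For $m=n$ and $l_{m}=l_{n}=l$, the identity reduces to
\begin{equation*}
\bigl\|\mathcal{K}(\varphi_{n,\sIH}^{l})\bigr\|_{L^{2}(\sIBm;\sIH)}^{2}
\;=\;\frac{2n+3}{2n+1}\,\bigl\|\varphi_{n,\sIH}^{l}\bigr\|_{L^{2}(\sIBp;\sIH)}^{2}
\;=\;\frac{2n+3}{2n+1},
\end{equation*}
so the factor $\sqrt{(2n+1)/(2n+3)}$ in (\ref{V::Formel:HomogMonogFunktionen_CONS_IH}) is precisely the one needed to normalize to $1$.

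The calculations themselves are routine; the main technical obstacle is bookkeeping around the non-commutative algebra, specifically ensuring that the conjugation of the Kelvin factor $\overline{\mathbf{x}}/|\mathbf{x}|^{3}$ combines correctly with itself and with the $\IH$-valued $P_{n}(\mathbf{y})$ on the correct side. Once the scalar identity $\mathbf{x}\overline{\mathbf{x}}=|\mathbf{x}|^{2}$ is used to collapse the two radial factors, everything else is a scalar change of variables plus the already-established orthonormality (\ref{Equation::NormSolidSpherMonogenics}) of the inner basis, so no further structural ingredient is required.
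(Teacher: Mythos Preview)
Your proof is correct. The paper itself defers the complete argument to \cite{Bock2010b}, but the remark at the start of Section~8 (that the proof of Theorem~\ref{V::Theorem:HomogMonogFunktionen_CONS_IH} yields the $S^{2}$ surface-integral orthogonality relations $<\varphi_{k,\sIH}^{l},\varphi_{q,\sIH}^{m}>_{\s (S^{2},\IH)}=|2k+3|\,\delta_{kq}\delta_{lm}$) indicates that the intended proof also proceeds by reducing the $L^{2}(\IBm;\IH)$ inner product to a radial integral times the spherical integral of the inner basis, which is exactly what your inversion change of variables produces.
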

\begin{proof} \textbf{The complete proof can be found in \cite{Bock2010b}.} \end{proof}

In \cite{Sudbery1979} it was proved that the subspace $\mathcal{M}_{n}^{-}(\IH)$ of outer spherical
functions with degree of homogeneity $-(n+2)$ has dimension $n+1$. Since the Kelvin transformation
maps the space $L^{2}(\mathbb{B}^{+}_{3};\IH)\cap\ker\bar{\partial}$ onto the space
$L^{2}(\mathbb{B}^{-}_{3};\IH)\cap\ker\bar{\partial}$ we directly conclude from the orthonormality
of the transformed system (\ref{V::Formel:HomogMonogFunktionen_CONS_IH}) the completeness in the
whole space.
\begin{corollary}
The system of homogeneous $\IH$-holomorphic functions $\bigl\{
\varphi_{-(n+2),\sIH}^{l}\,:\,l=0,\ldots,n\bigr\}_{n\in\INo}$ is an orthonormal basis in
$L^{2}(\mathbb{B}_{3}^{-};\IH) \cap \ker \bar{\partial}$.
\end{corollary}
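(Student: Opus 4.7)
The plan is to combine the orthonormality already provided by Theorem~\ref{V::Theorem:HomogMonogFunktionen_CONS_IH} with a dimension count in each homogeneity class and a density argument transported from the inner case by the Kelvin transformation. Only completeness is left to prove.

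First, I would fix $n\in\INo$ and invoke Sudbery's identity $\dim \mathcal{M}_{n}^{-}(\IH) = n+1$, cited in the excerpt. Theorem~\ref{V::Theorem:HomogMonogFunktionen_CONS_IH} furnishes exactly $n+1$ orthonormal, hence linearly independent, elements $\varphi_{-(n+2),\sIH}^{l}$ in $\mathcal{M}_{n}^{-}(\IH)$, so they constitute an orthonormal basis of that subspace. Next, for $n\neq m$ I would verify $\mathcal{M}_{n}^{-}(\IH)\,\bot\,\mathcal{M}_{m}^{-}(\IH)$ in $L^{2}(\IBm;\IH)$ by splitting the integral into a radial and a spherical factor: because the functions are homogeneous of degrees $-(n+2)$ and $-(m+2)$, the radial integral $\int_{1}^{\infty} r^{-(n+m+2)}\,dr$ is finite, while the angular factor is the $L^{2}(S^{2};\IH)$-pairing of two spherical monogenics of different degree, which vanishes by the same mechanism that yields $\mathcal{M}_{n}^{+}(\IH)\,\bot\,\mathcal{M}_{m}^{+}(\IH)$ in the inner case.

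The crucial step is the transfer of density from the inner basis. Given any $g \in L^{2}(\IBm;\IH)\cap\ker\bar{\partial}$, the mapping statement invoked just before the corollary yields a unique $f\in L^{2}(\IBp;\IH)\cap\ker\bar{\partial}$ with $g=\mathcal{K}(f)$. Expanding $f$ in its Fourier series~(\ref{Equation::FourierSeries}) with respect to the inner orthonormal basis $\{\varphi_{n,\sIH}^{l}\}$ and applying $\mathcal{K}$ term by term — then absorbing the normalizing factor from~(\ref{V::Formel:HomogMonogFunktionen_CONS_IH}) — produces an expansion of $g$ in the $\varphi_{-(n+2),\sIH}^{l}$. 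The main obstacle is justifying this term-by-term action of $\mathcal{K}$: one must verify that $\mathcal{K}$ is bounded between the two $L^{2}$-spaces of monogenic functions. A direct computation in spherical coordinates, exploiting the homogeneity of $r^{n}P_{n}$, gives $\|\mathcal{K}(r^{n}P_{n})\|_{L^{2}(\sIBm)}^{2} = \tfrac{2n+3}{2n+1}\,\|r^{n}P_{n}\|_{L^{2}(\sIBp)}^{2}$, so that the scaling $\sqrt{(2n+1)/(2n+3)}$ in~(\ref{V::Formel:HomogMonogFunktionen_CONS_IH}) turns the Kelvin map into a degree-wise isometry. Once this continuity is secured, convergence of the inner Fourier series in $L^{2}(\IBp;\IH)$ transfers directly to convergence of the image series in $L^{2}(\IBm;\IH)$, and completeness follows.
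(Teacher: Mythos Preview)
Your proposal is correct and follows essentially the same line as the paper: orthonormality from Theorem~\ref{V::Theorem:HomogMonogFunktionen_CONS_IH}, Sudbery's dimension count $\dim\mathcal{M}_{n}^{-}(\IH)=n+1$, and transport of completeness from the inner basis via the Kelvin transformation. The paper simply asserts that $\mathcal{K}$ maps $L^{2}(\IBp;\IH)\cap\ker\bar{\partial}$ onto $L^{2}(\IBm;\IH)\cap\ker\bar{\partial}$ and concludes; you supply the missing justification by computing the norm ratio on each homogeneity class and observing that the factor $\sqrt{(2n+1)/(2n+3)}$ renders the map a graded isometry, which is a welcome clarification rather than a different approach.
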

Figure \ref{V::Abbildung:compCONS_IH} illustrates the obtained results for the outer orthonormal
basis (\ref{V::Formel:HomogMonogFunktionen_CONS_IH})  up to now and its connection to the
orthonormal basis (\ref{Equation::CONS_IH}) in
$L^{2}(\mathbb{B}^{+}_{3};\IH)\cap\ker\bar{\partial}$.
\begin{figure}[htb]
\begin{center}
\includegraphics[scale=1.2,angle=0]{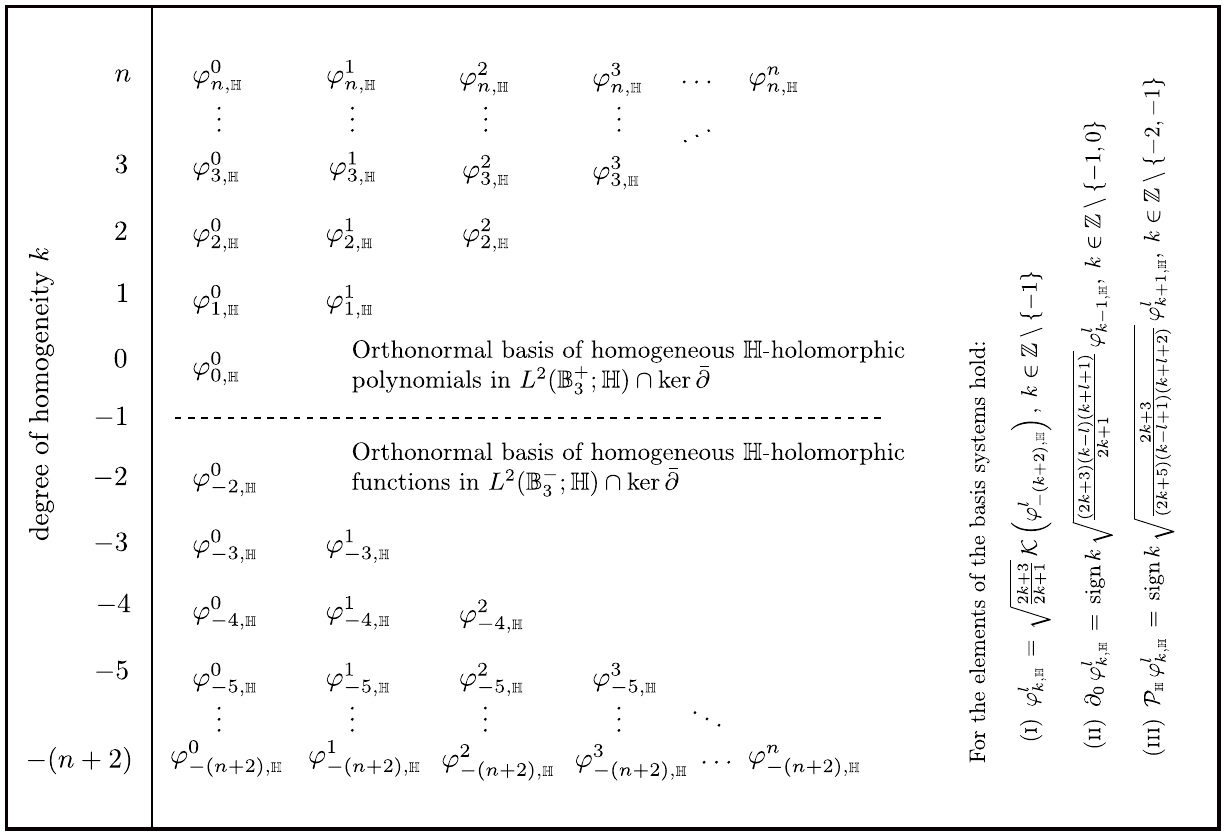}
\caption{Orthonormal bases of inner and outer homogeneous $\IH$-holomorphic functions.}
\label{V::Abbildung:compCONS_IH}
\end{center}
\end{figure}

Let us now study the basis elements of the outer system in particular with regard to the
hypercomplex derivative and primitive. In $\IC$, as is well-known, the derivative of an outer basis
function with degree of homogeneity $-(n+1)$, $n\in\INo$ yields again a real multiple of a basis
function with degree of homogeneity $-(n+2)$. The next theorem shows that also this structural
property is generalized to $\IH$ by the new system of $\IH$-holomorphic functions constructed here.
\begin{theorem}\label{V::Theorem:HomogMonogFunktionen_CONS_IH_Eigenschaften}
For the outer spherical functions $\varphi_{-(n+2),\sIH}^{l}$, $l=0,\ldots,n$ of the orthonormal
basis (\ref{V::Formel:HomogMonogFunktionen_CONS_IH}) the following properties hold:
\begin{itemize}
\item[(\textsc{i})] The application of the hypercomplex derivative $\partial_{0} =
\frac{1}{2}\partial$ to the basis elements of (\ref{V::Formel:HomogMonogFunktionen_CONS_IH}) yields
\begin{equation*}
\partial_{0}\,\varphi_{-(n+2),\sIH}^{l}\,=\,-\sqrt{\frac{(2n+1)(n-l+1)(n+l+2)}{2n+3}}\,\varphi_{-(n+3),\sIH}^{l},
\end{equation*}
where $l=0,\ldots,n$ and $n\in \INo$.
\item[(\textsc{ii})] The operator $\mathcal{P}_{\sIH}:\mathcal{M}_{n}^{-}(\IH)
\rightarrow \mathcal{M}_{n-1}^{-}(\IH)$, explicitly given by
\begin{equation*}
\mathcal{P}_{\sIH}\,\varphi_{-(n+2),\sIH}^{l}\,=\,
-\sqrt{\frac{2n+1}{(2n-1)(n-l)(n+l+1)}}\,\varphi_{-(n+1),\sIH}^{l},
\end{equation*}
defines a primitive for each element of the $\IH$-linear orthonormal basis, such that for arbitrary
$n\in \IN$ and $l=0,\ldots,n-1$ the relation
$\partial_{0}\left[\mathcal{P}_{\sIH}\,\varphi_{-(n+2),\sIH}^{l}\right] =
\varphi_{-(n+2),\sIH}^{l}$ holds.
\end{itemize}
\end{theorem}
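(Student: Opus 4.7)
The plan is to establish part (i) by direct computation and to deduce (ii) from (i) by algebraic inversion. For part (i), I would work in spherical coordinates $\mathbf{x}=r\boldsymbol{\omega}$. Using the Kelvin representation (\ref{V::Formel:HomogMonogFunktionen_CONS_IH}) together with the homogeneity of degree $n$ of $\varphi_{n,\sIH}^{l}$, the outer basis element takes the compact form
\[
\varphi_{-(n+2),\sIH}^{l}(\mathbf{x})\;=\;\sqrt{\tfrac{2n+1}{2n+3}}\;r^{-(n+2)}\,\overline{\boldsymbol{\omega}}\,\tilde{\varphi}_{n,\sIH}^{l}(\overline{\boldsymbol{\omega}}),
\]
where $\tilde{\varphi}_{n,\sIH}^{l}:=\varphi_{n,\sIH}^{l}\big|_{r=1}$ denotes the angular part. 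Applying $\partial_{0}$ in its spherical representation (\ref{Equation::conjugateCROperator-spherical}), the radial derivative of $r^{-(n+2)}$ contributes $-(n+2)\,\overline{\boldsymbol{\omega}}^{2}\,\tilde{\varphi}_{n,\sIH}^{l}(\overline{\boldsymbol{\omega}})$, while the angular operator $\overline{L}$ must be applied to $\overline{\boldsymbol{\omega}}\,\tilde{\varphi}_{n,\sIH}^{l}(\overline{\boldsymbol{\omega}})$. Combining both contributions should yield a constant multiple of $r^{-(n+3)}\,\overline{\boldsymbol{\omega}}\,\tilde{\varphi}_{n+1,\sIH}^{l}(\overline{\boldsymbol{\omega}})$, which, up to the prefactor $\sqrt{(2n+3)/(2n+5)}$, is precisely $\varphi_{-(n+3),\sIH}^{l}$.

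The key ingredient is to exploit Theorem \ref{Theorem::CONS_IH-Properties}(i) in reverse: applying $\partial_{0}$ in spherical form to the inner basis element $\varphi_{n+1,\sIH}^{l}$ and comparing the angular coefficient of $\tilde{\varphi}_{n,\sIH}^{l}$ produces an identity that relates $\overline{L}\,\tilde{\varphi}_{n+1,\sIH}^{l}$ to $\overline{\boldsymbol{\omega}}\,\tilde{\varphi}_{n+1,\sIH}^{l}$ and $\tilde{\varphi}_{n,\sIH}^{l}$. Substituting this relation into the computation of $\partial_{0}\mathcal{K}(\varphi_{n,\sIH}^{l})$ -- noting that the explicit formulae (\ref{Equation::MonogenicPoly_Xm})--(\ref{Equation::MonogenicPoly_Ym}) are transparent under the passage $\boldsymbol{\omega}\leftrightarrow\overline{\boldsymbol{\omega}}$ -- should collapse the off-column contributions and force the numerical factor to be exactly $-\sqrt{(2n+1)(n-l+1)(n+l+2)/(2n+3)}$, after accounting for the prefactor $\sqrt{(2n+1)/(2n+3)}$ from the definition and the normalising constant $\sqrt{(2n+3)/(2n+5)}$ inside $\varphi_{-(n+3),\sIH}^{l}$.

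For part (ii) the result follows immediately from (i) by algebraic inversion. Replacing $n$ by $n-1$ in (i) gives
\[
\partial_{0}\,\varphi_{-(n+1),\sIH}^{l}\;=\;-\sqrt{\tfrac{(2n-1)(n-l)(n+l+1)}{2n+1}}\,\varphi_{-(n+2),\sIH}^{l},\qquad l=0,\ldots,n-1,
\]
so defining $\mathcal{P}_{\sIH}\,\varphi_{-(n+2),\sIH}^{l}$ as the reciprocal multiple of $\varphi_{-(n+1),\sIH}^{l}$ directly furnishes the claimed closed form, and the identity $\partial_{0}\bigl[\mathcal{P}_{\sIH}\,\varphi_{-(n+2),\sIH}^{l}\bigr]=\varphi_{-(n+2),\sIH}^{l}$ holds by construction. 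The mapping property $\mathcal{M}_{n}^{-}(\IH)\to\mathcal{M}_{n-1}^{-}(\IH)$ is then transparent from the grading by degree of homogeneity.

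The main obstacle I anticipate is the combinatorial verification inside (i): showing that the tangential contribution indeed collapses onto the single basis element $\varphi_{-(n+3),\sIH}^{l}$ with the same column index $l$, rather than spreading across several columns of the outer orthonormal basis. Structurally, this amounts to the statement that the Kelvin transformation preserves the column structure highlighted in Theorem \ref{Theorem::CONS_IH-Properties}; once column preservation is established, the exact numerical coefficient is forced by matching the degrees of homogeneity on both sides.
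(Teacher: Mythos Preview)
The paper does not actually prove this theorem; it defers the argument entirely to \cite{Bock2010b}. However, the opening of Section~4 explicitly flags that the recurrence relations (\ref{Equation::RecurrenceAppell_Formel_I})--(\ref{Equation::RecurrenceAppell_Formel_III}) ``become also very advantageous to prove structural properties of orthonormal bases of outer spherical monogenics studied later on in this article (q.v.\ Section~6).'' This strongly indicates that the intended argument passes through the Appell recurrence (and the transformation (\ref{Equation::Transformation_A_Phi})) rather than through a direct spherical-coordinate computation of $\partial_{0}$ on the Kelvin transform.

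Your outline for part~(\textsc{i}) is plausible in spirit but has a concrete mismatch at its core step. The identity you extract from Theorem~\ref{Theorem::CONS_IH-Properties}(\textsc{i}) reads, in angular form,
\[
\overline{L}\,\tilde{\varphi}_{n+1,\sIH}^{l}(\boldsymbol{\omega})\;=\;2c\,\tilde{\varphi}_{n,\sIH}^{l}(\boldsymbol{\omega})-(n+1)\,\overline{\boldsymbol{\omega}}\,\tilde{\varphi}_{n+1,\sIH}^{l}(\boldsymbol{\omega}),
\]
whereas the outer computation requires $\overline{L}\bigl[\overline{\boldsymbol{\omega}}\,\tilde{\varphi}_{n,\sIH}^{l}(\overline{\boldsymbol{\omega}})\bigr]$: a product with the extra factor $\overline{\boldsymbol{\omega}}$, the \emph{other} polynomial degree, and the conjugated argument. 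The passage $\boldsymbol{\omega}\leftrightarrow\overline{\boldsymbol{\omega}}$ is the shift $\varphi\mapsto\varphi+\pi$, and under this shift the coefficients of $\overline{L}$ themselves change sign in a way that does not simply return $\overline{L}$; so the claimed ``transparency'' is not as innocent as you suggest. You would additionally need a product rule for $\overline{L}$ acting on $\overline{\boldsymbol{\omega}}\cdot(\text{function})$ and a second angular identity (for instance one coming from the monogenicity $\bar{\partial}\,\varphi_{-(n+2),\sIH}^{l}=0$, which in spherical form controls the complementary combination). None of this is in your sketch, and without it the collapse onto the single column $l$ is asserted rather than proved.

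A cleaner route, consistent with the paper's hint, is to work with the Appell functions: Kelvin-transform the recurrence (\ref{Equation::RecurrenceAppell_Formel_I}) using $\mathcal{K}(\mathbf{x}P)=\tfrac{\overline{\mathbf{x}}}{|\mathbf{x}|^{2}}\,\mathcal{K}(P)$, combine with the fact that $\partial_{0}f=\partial_{x_{0}}f$ for monogenic $f$, and then convert back to the orthonormal basis via (\ref{Equation::Transformation_A_Phi}) and (\ref{V::Formel:HomogMonogFunktionen_Umrechnung_A_Phi}). This keeps the column index $l$ manifestly fixed from the outset and reduces the problem to bookkeeping of real scalar factors. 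Your treatment of part~(\textsc{ii}) as algebraic inversion of~(\textsc{i}) is correct and needs no change.
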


\begin{proof} \textbf{The complete proof can be found in \cite{Bock2010b}.} \end{proof}

Denoting now the degree of homogeneity of the respective inner and outer functions with the uniform
parameter $k$ some of the structural properties (see, i.e., Theorems
\ref{Theorem::CONS_IH-Properties}, \ref{V::Theorem:HomogMonogFunktionen_CONS_IH} and
\ref{V::Theorem:HomogMonogFunktionen_CONS_IH_Eigenschaften}) can be generally stated for both
orthonormal systems (\ref{Equation::CONS_IH}) and (\ref{V::Formel:HomogMonogFunktionen_CONS_IH}).
\begin{corollary}\label{V::Korollar:HomogMonogONS_Eigenschaften}
For the homogeneous $\IH$-holomorphic functions of the complete orthonormal systems
(\ref{Equation::CONS_IH}) and (\ref{V::Formel:HomogMonogFunktionen_CONS_IH}) the following
relations hold:
\begin{itemize}
\item[(\textsc{i})] For arbitrary $k\in\mathbb{Z}\setminus\left\{-1\right\}$, we have the
transformation
\begin{equation*}
\varphi_{k,\sIH}^{l}\,=\,\sqrt{\frac{2k+3}{2k+1}}\,
\mathcal{K}\left(\varphi_{-(k+2),\sIH}^{l}\right),\;l=0,\ldots,|k+1|-1.
\end{equation*}
\item[(\textsc{ii})] The application of the hypercomplex derivative $\partial_{0} =
\frac{1}{2}\partial$ to an arbitrary element of the systems (\ref{Equation::CONS_IH}) and
(\ref{V::Formel:HomogMonogFunktionen_CONS_IH}) yields
\begin{equation*}
\partial_{0}\,\varphi_{k,\sIH}^{l}\,=\,\sign
k\,\sqrt{\frac{(2k+3)(k-l)(k+l+1)}{2k+1}}\,\varphi_{k-1,\sIH}^{l},
\end{equation*}
where $k\in\mathbb{Z}\setminus\left\{-1,0\right\}$ and
$$l=\left\{\begin{array}{ccl}
0,\ldots,k-1 & : & \sign k = 1,\\
0,\ldots,|k+1|-1 & : & \sign k = -1.
\end{array}
\right.$$
\item[(\textsc{iii})] The operator $\mathcal{P}_{\sIH}:\mathcal{M}_{n}^{\pm}(\IH)
\rightarrow \mathcal{M}_{n{\pm}1}^{\pm}(\IH)$, explicitly given by
\begin{equation*}
\mathcal{P}_{\sIH}\,\varphi_{k,\sIH}^{l}\,=\,\sign
k\,\sqrt{\frac{2k+3}{(2k+5)(k-l+1)(k+l+2)}}\,\varphi_{k+1,\sIH}^{l},
\end{equation*}
defines a primitive for each element of the $\IH$-linear orthonormal bases
(\ref{Equation::CONS_IH}) and (\ref{V::Formel:HomogMonogFunktionen_CONS_IH}), such that for
arbitrary $k\in\mathbb{Z}\setminus\left\{-2,-1\right\}$ and
$$l=\left\{\begin{array}{ccl}
0,\ldots,k       & : & \sign k = 1,\\
0,\ldots,|k+1|-2 & : & \sign k = -1
\end{array}
\right.$$ the relation $\partial_{0}\left[\mathcal{P}_{\sIH}\,\varphi_{k,\sIH}^{l}\right] =
\varphi_{k,\sIH}^{l}$ holds.
\end{itemize}
\end{corollary}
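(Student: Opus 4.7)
The plan is to obtain this unified statement as a bookkeeping consequence of the inner-basis results (Theorem \ref{Theorem::CONS_IH-Properties}) and the outer-basis results (Theorems \ref{V::Theorem:HomogMonogFunktionen_CONS_IH} and \ref{V::Theorem:HomogMonogFunktionen_CONS_IH_Eigenschaften}), by splitting the range of the index $k\in\mathbb{Z}$ into the two cases handled separately by those theorems and verifying that the normalization constants and sign factors match under the substitution $k\mapsto -(k+2)$.

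For part (\textsc{i}), I would first treat the range $k\geq 0$. Equation (\ref{V::Formel:HomogMonogFunktionen_CONS_IH}) reads
\begin{equation*}
\varphi_{-(k+2),\sIH}^{l}\;=\;\sqrt{\frac{2k+1}{2k+3}}\,\mathcal{K}\bigl(\varphi_{k,\sIH}^{l}\bigr),
\end{equation*}
and since the Kelvin transformation $\mathcal{K}$ is an involution on homogeneous monogenic functions, applying $\mathcal{K}$ to both sides immediately yields the asserted identity for $k\geq 0$. For $k\leq -2$, I would set $n:=-(k+2)\geq 0$, so that $\varphi_{k,\sIH}^{l}=\varphi_{-(n+2),\sIH}^{l}$ and $\varphi_{-(k+2),\sIH}^{l}=\varphi_{n,\sIH}^{l}$, and appeal again to (\ref{V::Formel:HomogMonogFunktionen_CONS_IH}). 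The only algebraic point to check is that $\frac{2k+3}{2k+1}=\frac{-(2n+1)}{-(2n+3)}=\frac{2n+1}{2n+3}$, so the prefactor in (\ref{V::Formel:HomogMonogFunktionen_CONS_IH}) is precisely the claimed $\sqrt{(2k+3)/(2k+1)}$ in reversed form.

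For part (\textsc{ii}), the case $k\geq 1$ is literally Theorem \ref{Theorem::CONS_IH-Properties}(\textsc{i}), and $\sign k=+1$ reproduces the claimed sign. For $k\leq -2$, again write $k=-(n+2)$ with $n\geq 0$; then $k-1=-(n+3)$ so $\varphi_{k-1,\sIH}^{l}=\varphi_{-(n+3),\sIH}^{l}$, and Theorem \ref{V::Theorem:HomogMonogFunktionen_CONS_IH_Eigenschaften}(\textsc{i}) gives
\begin{equation*}
\partial_{0}\varphi_{k,\sIH}^{l}\;=\;-\sqrt{\frac{(2n+1)(n-l+1)(n+l+2)}{2n+3}}\,\varphi_{k-1,\sIH}^{l}.
\end{equation*}
A direct computation shows that $2k+3=-(2n+1)$, $k-l=-(n+l+2)$, $k+l+1=-(n-l+1)$, and $2k+1=-(2n+3)$, so the four sign changes cancel in pairs under the radicand, and the expression inside the square root equals $(2n+1)(n-l+1)(n+l+2)/(2n+3)$, while the explicit minus sign is absorbed into $\sign k=-1$. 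The index range $l=0,\ldots,|k+1|-1=n$ in the case $\sign k=-1$ agrees with the range $l=0,\ldots,n$ in Theorem \ref{V::Theorem:HomogMonogFunktionen_CONS_IH_Eigenschaften}(\textsc{i}).

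For part (\textsc{iii}), the argument is strictly parallel, using Theorem \ref{Theorem::CONS_IH-Properties}(\textsc{ii}) for $k\geq 0$ (where $\sign k=+1$ and the formula agrees verbatim after the shift $n\mapsto k$) and Theorem \ref{V::Theorem:HomogMonogFunktionen_CONS_IH_Eigenschaften}(\textsc{ii}) for $k\leq -2$, with $k=-(n+2)$, $k+1=-(n+1)$, so $\varphi_{k+1,\sIH}^{l}=\varphi_{-(n+1),\sIH}^{l}$; the same sign-cancellation identities ($2k+5=-(2n-1)$, $k-l+1=-(n+l+1)$, $k+l+2=-(n-l)$, $2k+3=-(2n+1)$) reduce the unified expression to the formula of that theorem, with $\sign k=-1$ reproducing the explicit minus sign. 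The main obstacle is genuinely only bookkeeping: keeping track of the admissible $l$-ranges in each regime and verifying that the excluded cases ($k=-1$ in (\textsc{i}), $k\in\{-1,0\}$ in (\textsc{ii}), $k\in\{-2,-1\}$ in (\textsc{iii})) correspond precisely to degenerations either of the primitive/derivative map (constants, respectively $\mathcal{M}_{0}^{-}$) or of the Kelvin prefactor; no new analytic input beyond the two cited theorems is required.
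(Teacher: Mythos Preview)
Your proposal is correct and matches the paper's treatment: the paper presents this corollary without a separate proof, simply as the unified restatement of Theorems \ref{Theorem::CONS_IH-Properties}, \ref{V::Theorem:HomogMonogFunktionen_CONS_IH} and \ref{V::Theorem:HomogMonogFunktionen_CONS_IH_Eigenschaften} under the common parameter $k$, which is exactly the case-split-and-bookkeeping argument you carry out. The only ingredient you invoke that the paper leaves implicit is that $\mathcal{K}$ is an involution, but this is standard and is effectively what the paper means by calling $\mathcal{K}$ bijective between $\mathcal{M}_{n}^{+}$ and $\mathcal{M}_{n}^{-}$.
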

Regarding Figure \ref{V::Abbildung:compCONS_IH}, the aforementioned mapping and structural
properties between the elements of the bases (\ref{Equation::CONS_IH}) and
(\ref{V::Formel:HomogMonogFunktionen_CONS_IH}) can be easily retraced. Here, it also turns out that
some of the functions take a special case in the scheme. These are on the one hand the inner
functions $\left\{\varphi_{k,\sIH}^{k}\right\}_{k \geq 0}$ with respect to their hypercomplex
derivative and on the other hand the outer functions $\left\{\varphi_{k,\sIH}^{-(k+2)}\right\}_{k
\leq (-2)}$ with respect to their monogenic primitive.


\section{An orthogonal Appell basis of outer $\IH$-holo\-mor\-phic functions}

The starting point of the construction of an orthogonal Appell basis in
$L^{2}(\mathbb{B}^{-}_{3};\IH)\cap\ker\bar{\partial}$ is taken by relation
(\ref{Equation::Transformation_A_Phi}) between the elements of the orthonormal basis and the Appell
basis, respectively. Applying on both sides of the equation the Kelvin transformation
(\ref{V::Formel:HomogMonogFunktionen_KelvinTrans}) and taking into account
(\ref{V::Formel:HomogMonogFunktionen_CONS_IH}) yields the auxiliary function
\begin{align}
\mathcal{K}\left(A_{n}^{l}\right) &\;=\;
2^{l+1}\,n!\,\sqrt{\frac{\pi}{(2n+3)\,(n-l)!\,(n+l+1)!}}\;\mathcal{K}\left(\varphi_{n,\sIH}^{l}\right)\nonumber\\
&\;=\;
2^{l+1}\,n!\,\sqrt{\frac{\pi}{(2n+1)\,(n-l)!\,(n+l+1)!}}\;\varphi_{-(n+2),\sIH}^{l}\;=:\;\mathrm{H}_{-(n+2),\sIH}^{\,l}.
\label{V::Formel:HomogMonogFunktionen_Hilfsfunktion_H}
\end{align}
On the basis of Figure \ref{V::Abbildung:compCONS_IH} it becomes clear that the central operation
in constructing an Appell basis in $L^{2}(\mathbb{B}^{-}_{3};\IH)\cap\ker\bar{\partial}$ is now in
contrast to the inner Appell basis (see, i.e., \cite{Bock2009,Bock2009a}) the derivation and not
the primitivation. Therefore, we need the hypercomplex derivative of the aforementioned auxiliary
function. With paragraph $(\textsc{i})$ of Theorem
\ref{V::Theorem:HomogMonogFunktionen_CONS_IH_Eigenschaften} we directly obtain the relation
\begin{equation*}
\partial_{0}\,\mathrm{H}_{-(n+2),\sIH}^{\,l}\;=\;-\frac{(n-l+1)(n+l+2)}{n+1}\,\mathrm{H}_{-(n+3),\sIH}^{\,l}\,.
\end{equation*}
A next difficulty results from the fact that for an arbitrary $n\in\INo$ not all elements of the
subspace $\mathcal{M}_{n+1}^{-}(\IH)$ can be obtained by derivation of the elements in
$\mathcal{M}_{n}^{-}(\IH)$. Regarding Figure \ref{V::Abbildung:compCONS_IH}, these stand alone
elements $\left\{\varphi_{-(n+2),\sIH}^{n}\right\}_{n\in\INo}$ correspond to the Kelvin
transformations of the monogenic constants. Conversely, these functions can be characterized by the
fact that they don't possess a $\IH$-holomorphic primitive in the set of the outer basis functions.
In the complex case, a similar situation can be found for the outer holomorphic function
$\frac{1}{z}$, for which the complex logarithm defines a primitive. Hence, this function doesn't
possess a direct primitive in the basis of the negative $z$-powers as well.

It is for this reason that we now have to think about an appropriate normalization of these special
outer Appell functions. For this purpose, we recall the explicit expressions of the monogenic
constants of the inner Appell system
\begin{equation*}
A_{n}^{n}\;=\; (x_{1} - x_{2}\mathbf{e}_3)^{n}\;=:\; \boldsymbol{\zeta}^{n},\quad n\in\INo\,.
\end{equation*}
Applying the Kelvin transformation (\ref{V::Formel:HomogMonogFunktionen_KelvinTrans}) to the
monogenic Appell constants, we obtain the simple relations
\begin{equation}\label{V::Formel:HomogMonogFunktionen_Transformation_Ann}
A_{-(n+2)}^{n}\;:=\;\mathcal{K}\left(A_{n}^{n}\right)\;=\;(-1)^{n}\,
\frac{\bar{\mathbf{x}}\,\boldsymbol{\zeta}^{n}}{|\mathbf{x}|^{2n+3}},\quad n\in\INo.
\end{equation}
As one can easily conclude from equation (\ref{V::Formel:HomogMonogFunktionen_Transformation_Ann}),
the transformation of the Appell polynomial $A_{0}^{0}$ yields a real multiple of the Cauchy kernel
\begin{equation}\label{IV::Formel:Cauchy-Kern}
E_{2}(\mathbf{x})\;=\; \frac{1}{4\pi}\frac{\bar{\mathbf{x}}}{|\mathbf{x}|^3},\quad\mathbf{x}\neq 0
\end{equation}
which is analogous to the complex case. The images of the other transformed constants $A_{n}^{n}$,
$n>0$ can then be characterized as products of the Cauchy kernel and the function
$\frac{4\pi\,(-1)^{n}\,\boldsymbol{\zeta}^{n}}{|\mathbf{x}|^{2n}}$. Caused by the fact that these
properties naturally arise by the transformation of the Appell constants we take in the first
instance the relation (\ref{V::Formel:HomogMonogFunktionen_Transformation_Ann}) as a basis for the
normalization of the outer Appell functions. Thus, the construction auf the outer Appell system has
to be done column-wise (q.v., Figure \ref{V::Abbildung:compCONS_IH}). For an arbitrary $p\in\INo$
and with $\gamma_{n,l} = -\frac{(n-l+1)(n+l+2)}{n+1}$, we get for the ($p+1$)-th column:
\begin{equation*}
\begin{array}{rcl}
\Ah_{-(p+2)}^{p} & = &\mathrm{H}_{-(p+2),\sIH}^{p}\\
\Ah_{-(p+3)}^{p} & = &\D\frac{1}{(-(p+2))}\,\partial_{0} \Ah_{-(p+2)}^{p} \;=\; \frac{1}{(-(p+2))}\,\gamma_{p,p}\,\mathrm{H}_{-(p+3),\sIH}^{p}\\
\Ah_{-(p+4)}^{p} & = &\D\frac{1}{(-(p+3))}\,\partial_{0} \Ah_{-(p+3)}^{p} \;=\; \frac{1}{(-(p+2))(-(p+3))}\,\gamma_{p,p}\,\gamma_{p+1,p}\,\mathrm{H}_{-(p+4),\sIH}^{p}\\
\vdots\hspace{0.3cm} & &\hspace{1cm}\vdots \\
\Ah_{-(n+2)}^{l} & = &\D\frac{(l+1)!}{(-1)^{n-l}(n+1)!}
\prod_{j=l}^{n-1}\gamma_{j,l}\,\mathrm{H}_{-(n+2),\sIH}^{l}.
\end{array}
\end{equation*}
In regard of relation (\ref{V::Formel:HomogMonogFunktionen_Hilfsfunktion_H}), we finally obtain
after simplification an outer Appell system
\begin{equation*}
\Ah_{-(n+2)}^{l}\;=\;\frac{l!\,(l+1)!\,(n+l+1)!\,(n-l)!}{n!\,(n+1)!\,(2l+1)!}\,\mathcal{K}\left(A_{n}^{l}\right),\;
l=0,\ldots,n,\;n\in\INo
\end{equation*}
in terms of the Kelvin transformed inner Appell set (\ref{Equation::AppellSet}). Having in mind
that a column-wise multiplication with an arbitrary $\IH$-valued constant with real coordinates
doesn't affect the Appell property, we can neglect those coefficients in the above representation
of the Appell set which only depend on the parameter $l$. Consequently, the outer Appell system
becomes
\begin{equation}\label{V::Formel:HomogMonogFunktionen_AppellMenge_oA}
A_{-(n+2)}^{l}\;=\;\frac{(n+l+1)!\,(n-l)!}{n!\,(n+1)!}\,\mathcal{K}\left(A_{n}^{l}\right),\;
l=0,\ldots,n,\;n\in\INo,
\end{equation}
which is particularly characterized by the following theorem.
\begin{theorem}\label{V::Theorem:HomogMonogFunktionen_AppellMenge}
The system of outer homogeneous $\IH$-holomorphic functions $\bigl\{A_{-(n+2)}^{l}:
l=0,\ldots,n\bigr\}_{n\in\INo}$ is an orthogonal Appell basis in $L^{2}(\mathbb{B}_{3}^{-};\IH)
\cap \ker \bar{\partial}$, such that for each $n\in\INo$
\begin{equation*}
\partial_{0} A_{-(n+2)}^{l} = -(n+2)\,A_{-(n+3)}^{l},\quad l=0,\ldots,n
\end{equation*}
and
\begin{equation*}
A_{-(n+2)}^{n}\;=\;\frac{(-1)^{n}\,(2n+1)!}{n!\,(n+1)!}\,
\frac{\bar{\mathbf{x}}\,\boldsymbol{\zeta}^{n}}{|\mathbf{x}|^{2n+3}},\quad
\boldsymbol{\zeta}\,=\,x_{1}-x_{2}\mathbf{e}_{3}.
\end{equation*}
\end{theorem}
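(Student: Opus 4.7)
The plan is to reduce everything to the already-established properties of the auxiliary function $\mathrm{H}_{-(n+2),\sIH}^{l}=\mathcal{K}(A_{n}^{l})$ introduced in (\ref{V::Formel:HomogMonogFunktionen_Hilfsfunktion_H}) and of the outer orthonormal basis $\{\varphi_{-(n+2),\sIH}^{l}\}$. First I would establish orthogonality and completeness. By (\ref{V::Formel:HomogMonogFunktionen_Hilfsfunktion_H}) together with the defining equation (\ref{V::Formel:HomogMonogFunktionen_AppellMenge_oA}), each $A_{-(n+2)}^{l}$ is a real scalar multiple of $\varphi_{-(n+2),\sIH}^{l}$. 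Since $\{\varphi_{-(n+2),\sIH}^{l}\,:\,l=0,\ldots,n\}_{n\in\INo}$ was already shown to be an orthonormal basis of $L^{2}(\IBm;\IH)\cap\ker\bar{\partial}$ in the corollary following Theorem \ref{V::Theorem:HomogMonogFunktionen_CONS_IH}, the orthogonality and completeness of the new system are immediate.

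Next I would verify the Appell relation $\partial_{0}A_{-(n+2)}^{l}=-(n+2)A_{-(n+3)}^{l}$. This is the only computational step, but it is entirely mechanical: applying the hypercomplex derivative formula
\begin{equation*}
\partial_{0}\,\mathrm{H}_{-(n+2),\sIH}^{\,l}\;=\;-\frac{(n-l+1)(n+l+2)}{n+1}\,\mathrm{H}_{-(n+3),\sIH}^{\,l},
\end{equation*}
derived just before (\ref{V::Formel:HomogMonogFunktionen_AppellMenge_oA}) from paragraph (\textsc{i}) of Theorem \ref{V::Theorem:HomogMonogFunktionen_CONS_IH_Eigenschaften}, to the definition $A_{-(n+2)}^{l}=\frac{(n+l+1)!(n-l)!}{n!(n+1)!}\mathrm{H}_{-(n+2),\sIH}^{l}$, and then replacing $\mathrm{H}_{-(n+3),\sIH}^{l}$ by the corresponding expression of $A_{-(n+3)}^{l}$ (obtained by shifting $n\mapsto n+1$ in the same defining formula), the normalizing factorials telescope and collapse to the clean coefficient $-(n+2)$. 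The key point here is precisely that the normalization $\frac{(n+l+1)!(n-l)!}{n!(n+1)!}$ was chosen so as to convert the non-trivial prefactor $-\frac{(n-l+1)(n+l+2)}{n+1}$ into the $l$-independent value $-(n+2)$ required by the Appell property.

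Finally, the closed form of $A_{-(n+2)}^{n}$ follows by specializing $l=n$ in (\ref{V::Formel:HomogMonogFunktionen_AppellMenge_oA}) and plugging in the inner monogenic constant $A_{n}^{n}=\boldsymbol{\zeta}^{n}=(x_{1}-x_{2}\mathbf{e}_{3})^{n}$ from Section 4, together with the explicit Kelvin transform (\ref{V::Formel:HomogMonogFunktionen_Transformation_Ann}). The prefactor simplifies to $\frac{(2n+1)!\cdot 0!}{n!(n+1)!}=\frac{(2n+1)!}{n!(n+1)!}$, yielding the stated expression.

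I do not anticipate a real obstacle here, since all the heavy lifting, namely the hypercomplex derivative of $\varphi_{-(n+2),\sIH}^{l}$, the behaviour of $\mathcal{K}$ on the monogenic constants, and the orthonormality of $\{\varphi_{-(n+2),\sIH}^{l}\}$, is already available. The only place where care is required is in bookkeeping the factorial ratios in the Appell computation to make sure the $l$-dependence cancels; in fact the motivation for the precise normalization in (\ref{V::Formel:HomogMonogFunktionen_AppellMenge_oA}) is exactly to guarantee this cancellation, which is also the structural reason why the construction had to be carried out column-by-column starting from the transformed monogenic constants.
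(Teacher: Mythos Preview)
Your proposal is correct and mirrors exactly what the paper does: the theorem is stated immediately after the column-wise construction culminating in (\ref{V::Formel:HomogMonogFunktionen_AppellMenge_oA}), and the paper's proof is literally ``The proof follows by construction.'' Your breakdown into orthogonality via (\ref{V::Formel:HomogMonogFunktionen_Hilfsfunktion_H}), the Appell relation via the derivative of $\mathrm{H}_{-(n+2),\sIH}^{l}$, and the $l=n$ case via (\ref{V::Formel:HomogMonogFunktionen_Transformation_Ann}) is precisely the content of that construction.
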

\begin{proof}
The proof follows by construction.
\end{proof}

For a fixed $n\in\INo$, the direct relation between the outer Appell functions
(\ref{V::Formel:HomogMonogFunktionen_AppellMenge_oA}) and the elements of the outer orthonormal
basis (\ref{V::Formel:HomogMonogFunktionen_CONS_IH}) is then equivalently given by
\begin{equation}\label{V::Formel:HomogMonogFunktionen_Umrechnung_A_Phi}
A_{-(n+2)}^{l}\,=\,\frac{2^{l+1}}{(n+1)!}\sqrt{\frac{\pi\,(n-l)!\,(n+l+1)!}{2n+1}}\,
\varphi_{-(n+2),\sIH}^{l},\;l=0,\ldots,n.
\end{equation}
From that and due to relation (\ref{V::Formel:HomogMonogFunktionen_CONS_IH}), we also obtain the
link to the elements of the inner orthonormal basis (\ref{Equation::CONS_IH}).

Finally, let us consider an alternative approach to generate the outer Appell basis. Here, the
elements are constructed by hypercomplex derivation and thus independent from the Kelvin
transformation acting on the inner polynomials. Using the relations for the transformed monogenic
constants given in Theorem \ref{V::Theorem:HomogMonogFunktionen_AppellMenge}, we get
\begin{equation*}
A_{-(n+2)}^{l}\,=\,\frac{(-1)^{n}(2l+1)!}{l!\,(n+1)!}\partial_{0}^{n-l}\left(
\frac{\bar{\mathbf{x}}\,\boldsymbol{\zeta}^{l}}{|\mathbf{x}|^{2l+3}}
\right)\,=\,\frac{(-1)^{n}(2l+1)!}{l!\,(n+1)!}\left[\frac{\partial^{n-l}}{\partial x_{0}^{n-l}}
\frac{\bar{\mathbf{x}}}{|\mathbf{x}|^{2l+3}} \right]\boldsymbol{\zeta}^{l},
\end{equation*}
where $l=0,\ldots,n$ and $n\in\INo$. In this context it must be emphasized that already in
\cite{BDS} partial derivatives of the Cauchy kernel were studied. As one can easily conclude from
the last equation, the partial derivatives $\frac{\partial^{n}}{
\partial x_{0}^{n}}$ of the unnormed Cauchy kernel correspond to the special case $l=0$.


\section{Laurent series expansions of $\IH$-holomorphic functions}

Let us now consider the construction of a new type of Laurent series expansions of
$\IH$-holomorphic functions using in particular the aforementioned function systems. At first, we
obtain from the proof of the Theorem \ref{V::Theorem:HomogMonogFunktionen_CONS_IH} that the surface
integral of two arbitrary elements of the orthonormal systems (\ref{Equation::CONS_IH}) and
(\ref{V::Formel:HomogMonogFunktionen_CONS_IH}) with respect to the unit sphere $S^{2}$ yields
\begin{equation*}
<\varphi_{k,\sIH}^{l},\varphi_{q,\sIH}^{m}>_{\s (S^{2},\IH)}\;=\;\left\{
\begin{array}{ccl}
|2k+3| & : & k = q \wedge l = m,\\
0 & : & k \neq q \vee l \neq m,
\end{array}\right.
\end{equation*}
where $l=0,\ldots,|k+1|-1$, $m=0,\ldots,|q+1|-1$ and $k,q \in
\mathbb{Z}\setminus\left\{-1\right\}$. Thus, on the basis of the systems (\ref{Equation::CONS_IH})
and (\ref{V::Formel:HomogMonogFunktionen_CONS_IH}) we make a general Fourier ansatz with respect to
$S^{2}$ resulting in the following series expansion
\begin{equation}\label{V::Formel:Laurentreihe_Einheitskugelschale}
f(\mathbf{x}) \;:=\; \sum_{{k=-\infty \atop k\neq
-1}}^{\infty}\sum_{l=0}^{|k+1|-1}\,\varphi_{k,\sIH}^{l}(\mathbf{x})\,\boldsymbol{\gamma}^{\ast}_{k,l},\quad
\boldsymbol{\gamma}^{\ast}_{k,l}\;=\;\frac{1}{|2k+3|}\int_{S^{2}}\overline{\varphi_{k,\sIH}^{l}(\mathbf{y})}\,f(\mathbf{y})\,d\sigma\,.
\end{equation}
This orthogonal series can be viewed as a generalized Laurent series expansion of the
$\IH$-holomorphic function $f$ in the domain of the unit spherical shell $\Omega_{S^{2}} =
\left\{\mathbf{x}\;\big|\;\xi < |\mathbf{x}| < 1 \right\}$. Since the outer radius is fixed by the
unit sphere $S^{2}$, it holds $0\leq\xi < 1$.

In the following, the series expansion (\ref{V::Formel:Laurentreihe_Einheitskugelschale}) will be
generalized to an arbitrary spherical shell and the interplay with the monogenic Taylor expansion
(\ref{Equation::TaylorSeries}) as well as with Cauchy's integral formula
\begin{equation}\label{IV::Formel:Integralformel_von_Cauchy}
\int_{\partial\Omega} E_{2}(\mathbf{y-x})\,d\mathbf{y}^{\ast}\,f(\mathbf{y})\;=\;\left\{
\begin{array}{ccl}
f(\mathbf{x}) & : & \mathbf{x} \in \Omega,\\
0 & : & \mathbf{x} \in \IR^{3}\setminus\overline{\Omega}
\end{array}\right.
\end{equation}
will be shown. At first we replace each basis function in the Laurent coefficients of the series
expansion (\ref{V::Formel:Laurentreihe_Einheitskugelschale}) equivalently by its associated Kelvin
transformed function. Due to paragraph $(\textsc{i})$ of Corollary
\ref{V::Korollar:HomogMonogONS_Eigenschaften} we get
\begin{align*}
\boldsymbol{\gamma}^{\ast}_{k,l} \;=\; &
\frac{1}{|2k+3|}\sqrt{\frac{2k+3}{2k+1}}\int_{S^{2}}\overline{\frac{\bar{\mathbf{y}}}{|\mathbf{y}|^{3}}\varphi_{-(k+2),\sIH}^{l}\left(\frac{\bar{\mathbf{y}}}{|\mathbf{y}|^{2}}\right)}\,f(\mathbf{y})\,d\sigma\\[1ex]
\;=\; &
\frac{1}{\sqrt{(2k+3)(2k+1)}}\int_{S^{2}}\overline{\varphi_{-(k+2),\sIH}^{l}\left(\frac{\bar{\mathbf{y}}}{|\mathbf{y}|^{2}}\right)}\,\frac{\mathbf{y}}{|\mathbf{y}|^{3}}\,f(\mathbf{y})\,d\sigma.
\end{align*}
Let now $S^{2}_{\!\rho} = \left\{\mathbf{x}\;\big|\;|\mathbf{x}| = \rho \right\}$ be the sphere of
radius $\rho$ and $d\mathbf{y}^{\ast} = \D\frac{\mathbf{y}}{|\mathbf{y}|}d\sigma$ be the normalized
surface element. The Laurent coefficients regarding the integration over $S^{2}_{\!\rho}$ become
equivalently
\begin{equation}\label{V::Formel:Laurentreihe_Koeffizient_Sr}
\boldsymbol{\gamma}^{\ast}_{k,l} \;=\;
\frac{1}{\sqrt{(2k+3)(2k+1)}}\int_{S^{2}_{\!\rho}}\overline{\varphi_{-(k+2),\sIH}^{l}(\bar{\mathbf{y}})}\,d\mathbf{y}^{\ast}f(\mathbf{y}).
\end{equation}
From that we consider the orthogonal series (\ref{V::Formel:Laurentreihe_Einheitskugelschale}) in
the form
\begin{equation*}
f(\mathbf{x})\,=\,f^{a}(\mathbf{x})\,+\,f^{b}(\mathbf{x})\,=\,\underbrace{\sum_{m=0}^{\infty}\sum_{l=0}^{m}\,\varphi_{-(m+2),\sIH}^{l}(\mathbf{x})\,\boldsymbol{\gamma}^{\ast}_{-(m+2),l}}
_{\D\text{\textcircled{\raisebox{1pt}{\scriptsize a}}}} \; + \;
\underbrace{\sum_{n=0}^{\infty}\sum_{l=0}^{n}\,\varphi_{n,\sIH}^{l}(\mathbf{x})\,\boldsymbol{\gamma}^{\ast}_{n,l}}
_{\D\text{\textcircled{\raisebox{0pt}{\scriptsize b}}}}
\end{equation*}
and call the partial series \textcircled{\raisebox{1pt}{\scriptsize a}} the \textit{principle part}
and \textcircled{\raisebox{0pt}{\scriptsize b}} the \textit{secondary part} of the Laurent series
expansion of $f$. In consideration of the Laurent coefficients
(\ref{V::Formel:Laurentreihe_Koeffizient_Sr}), with $k=-(m+2)$, $m\in\INo$ and accordingly $k=n$,
$n\in\INo$, we obtain by substitution of the relations (\ref{Equation::Transformation_A_Phi}) and
(\ref{V::Formel:HomogMonogFunktionen_Umrechnung_A_Phi}) for the principle part
\begin{align*}
f^{a}(\mathbf{x})\;=\;&\sum_{m=0}^{\infty}\sum_{l=0}^{m}\,A_{-(m+2)}^{l}(\mathbf{x})\;\frac{m+1}{4^{l+1}\,\pi}\int_{S^{2}_{\!\rho}}\overline{A_{m}^{l}(\bar{\mathbf{y}})}\,d\mathbf{y}^{\ast}f(\mathbf{y})
\intertext{and analogously for the secondary part of the series}
f^{b}(\mathbf{x})\;=\;&\sum_{n=0}^{\infty}\sum_{l=0}^{n}\,A_{n}^{l}(\mathbf{x})\;\frac{n+1}{4^{l+1}\,\pi}\int_{S^{2}_{\!\rho}}\overline{A_{-(n+2)}^{l}(\bar{\mathbf{y}})}\,d\mathbf{y}^{\ast}f(\mathbf{y}).
\end{align*}
Consequently, the main result of this subsection is given by the following theorem:
\begin{theorem}[Laurent series expansion in $\IH$]
Let $f$ be $\IH$-holomorphic in the spherical shell
\begin{equation*}
 \Omega_{\xi} = \left\{\mathbf{x}\;\big|\; \xi_{1} < |\mathbf{x}| < \xi_{2} \quad\text{with}\quad 0\leq \xi_{1} < \xi_{2} \leq\infty
 \right\}\,.
\end{equation*}
Then $f$ can be represented in $\Omega_{\xi}$ as a Laurent series expansion
\begin{equation}\label{V::Formel:Laurentreihe_Kugelschale}
f(\mathbf{x}) \;:=\; \sum_{{k=-\infty \atop k\neq
-1}}^{\infty}\sum_{l=0}^{|k+1|-1}\,A_{k}^{l}(\mathbf{x})\,\boldsymbol{\gamma}_{k,l},\quad
\boldsymbol{\gamma}_{k,l}\;=\;\frac{|k+1|}{4^{l+1}\pi}\int_{S^{2}_{\!\rho}}\overline{A_{-(k+2)}^{l}(\bar{\mathbf{y}})}\,d\mathbf{y}^{\ast}f(\mathbf{y})
\end{equation}
around the origin. Here, $S^{2}_{\!\rho}$ denotes an arbitrary sphere of distance $\rho$ from the
origin lying inside $\Omega_{\xi}$.
\end{theorem}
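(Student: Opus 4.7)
The plan is to reduce the statement for an arbitrary spherical shell $\Omega_\xi$ to the unit-shell computation carried out in the preceding discussion by combining Cauchy's integral formula (\ref{IV::Formel:Integralformel_von_Cauchy}) with the orthogonal expansions already established in $L^{2}(\mathbb{B}^{+}_{3};\IH) \cap \ker \bar{\partial}$ and $L^{2}(\mathbb{B}^{-}_{3};\IH) \cap \ker \bar{\partial}$. First I would fix radii $r_1,r_2$ with $\xi_1 < r_1 < r_2 < \xi_2$ and a point $\mathbf{x}$ satisfying $r_1 < |\mathbf{x}| < r_2$. Applying Cauchy's formula on the annular region between $S^2_{r_1}$ and $S^2_{r_2}$ produces a canonical splitting $f = f^{+} + f^{-}$, where
\[
f^{+}(\mathbf{x}) \,=\, \int_{S^2_{r_2}} E_2(\mathbf{y}-\mathbf{x})\, d\mathbf{y}^{\ast} f(\mathbf{y}), \qquad f^{-}(\mathbf{x}) \,=\, -\int_{S^2_{r_1}} E_2(\mathbf{y}-\mathbf{x})\, d\mathbf{y}^{\ast} f(\mathbf{y}),
\]
with $f^{+}$ being $\IH$-holomorphic inside the ball of radius $r_2$ and $f^{-}$ being $\IH$-holomorphic in the exterior of the ball of radius $r_1$ and vanishing at infinity.

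Next I would expand each piece in the appropriate orthogonal Appell basis. The function $f^{+}$ lies in $L^{2}(\mathbb{B}_{3}(0,r);\IH) \cap \ker \bar{\partial}$ for every $r < r_2$ and hence admits a Taylor-type expansion in the inner Appell polynomials $\{A_{n}^{l}\}$ via (\ref{Equation::TaylorSeries}); analogously, $f^{-}$, pulled through the outer orthonormal basis (\ref{V::Formel:HomogMonogFunktionen_CONS_IH}) together with the conversion (\ref{V::Formel:HomogMonogFunktionen_Umrechnung_A_Phi}), admits an expansion in the outer Appell basis $\{A_{-(n+2)}^{l}\}$. Concatenating both gives precisely the two-sided sum indexed by $k \in \mathbb{Z} \setminus \{-1\}$ that appears in (\ref{V::Formel:Laurentreihe_Kugelschale}), and the homogeneity of $A_k^{l}$ of degree $k$ furnishes termwise absolute and uniform convergence on every compact sub-annulus of $\Omega_\xi$.

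To identify the coefficients, I would repeat the orthogonality argument used for the unit sphere in the derivation of (\ref{V::Formel:Laurentreihe_Koeffizient_Sr}), but now on a general sphere $S^2_{\rho}$ inside $\Omega_\xi$. The orthonormality on $S^2$ scales in a controlled way under the substitution $\mathbf{y} = \rho\boldsymbol{\omega}$, and combining this with the normalization constants relating $\{A_k^{l}\}$ to $\{\varphi_{k,\sIH}^{l}\}$, namely (\ref{Equation::Transformation_A_Phi}) and (\ref{V::Formel:HomogMonogFunktionen_Umrechnung_A_Phi}), yields precisely the closed form $\boldsymbol{\gamma}_{k,l}$ of the theorem at both signs of $k$. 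Independence of the coefficients from the particular sphere $S^2_{\rho}$ inside $\Omega_\xi$ then follows from Cauchy's integral theorem applied to the $\IH$-holomorphic integrand $\overline{A_{-(k+2)}^{l}(\bar{\mathbf{y}})}\, d\mathbf{y}^{\ast} f(\mathbf{y})$ on the shell bounded by any two admissible spheres.

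The main obstacle will be the careful bookkeeping of the scaling and normalization constants when translating the unit-shell identities to the general radius $\rho$, so that the interior expansion in $A_k^{l}$ and the exterior expansion in $A_{-(k+2)}^{l}$ fit together into a single Laurent series whose coefficient formula has the same shape for both signs of $k$. A secondary technical point is verifying that the splitting $f = f^{+} + f^{-}$, though based on an arbitrary intermediate pair $(r_1,r_2)$, produces series coefficients intrinsic to $f$; this will again reduce to a Cauchy-theorem argument showing that different choices of $(r_1,r_2)$ yield the same $\boldsymbol{\gamma}_{k,l}$.
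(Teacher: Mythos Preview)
Your proposal is correct in outline but follows a different route from the paper. The paper does not introduce an explicit Cauchy splitting $f = f^{+} + f^{-}$ at all; instead, the theorem is obtained as the end point of the computation immediately preceding it. Concretely, the paper starts from the two-sided Fourier ansatz (\ref{V::Formel:Laurentreihe_Einheitskugelschale}) on the unit sphere $S^{2}$, uses paragraph~(\textsc{i}) of Corollary~\ref{V::Korollar:HomogMonogONS_Eigenschaften} to rewrite each coefficient as an integral involving the Kelvin-transformed basis element, rescales the integral from $S^{2}$ to $S^{2}_{\!\rho}$ to obtain (\ref{V::Formel:Laurentreihe_Koeffizient_Sr}), and finally substitutes the normalization identities (\ref{Equation::Transformation_A_Phi}) and (\ref{V::Formel:HomogMonogFunktionen_Umrechnung_A_Phi}) to pass from the $\varphi$-basis to the Appell functions $A_{k}^{l}$. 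The decomposition into principal and secondary parts in the paper is purely a relabelling of the index range $k\ge 0$ versus $k\le -2$, not a Cauchy-type analytic splitting.

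Your approach, by contrast, mirrors the classical complex-analytic proof: Cauchy's formula on the annulus produces an analytic splitting, each piece is expanded in the relevant (inner or outer) orthogonal basis, and the coefficients are then matched. This has the advantage of making convergence on compact sub-shells and independence from the auxiliary radii transparent, at the cost of having to verify separately that $\overline{A_{-(k+2)}^{l}(\bar{\mathbf{y}})}$ is right monogenic so that the Cauchy integral theorem applies to the coefficient integrand. The paper's argument avoids this verification because it never leaves the orthogonality framework, but it is correspondingly less explicit about convergence issues. Either route leads to (\ref{V::Formel:Laurentreihe_Kugelschale}); the bookkeeping you flag as the main obstacle is exactly the content of the paper's substitution step and is handled there by direct insertion of the two transformation formulae.
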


Finally, let us remark on some interesting properties of the presented series expansion. In analogy
to the complex case it can be shown that the secondary part of the Laurent series expansion
(\ref{V::Formel:Laurentreihe_Kugelschale}) corresponds to the associated Taylor series expansion.
For $n\in\INo$ and $l=0,\ldots,n$, the application of the hypercomplex differential operators
$\bar{\partial}_{\sIC}^{\,l}\,\partial_{0}^{n-l}$ on both sides of Cauchy's integral formula
(\ref{IV::Formel:Integralformel_von_Cauchy}) yields the relation
\begin{equation*}
\bar{\partial}_{\sIC}^{\,l}\,\partial_{0}^{n-l}\,f(\mathbf{x}) \;=\;
\frac{1}{4\pi}\int_{S^{2}_{\!\rho}}\bar{\partial}_{\sIC}^{\,l}\,\partial_{0}^{n-l}\,\left(\frac{\overline{\mathbf{y-x}}}{|\mathbf{y-x}|^{3}}\right)\,d\mathbf{y}^{\ast}f(\mathbf{y}).
\end{equation*}
Further, it can be shown that
\begin{equation*}
\bar{\partial}_{\sIC}^{\,l}\,\partial_{0}^{n-l}\,\left(\frac{\overline{\mathbf{y-x}}}{|\mathbf{y-x}|^{3}}\right)\,=\,
\bar{\partial}_{\sIC}^{\,l}\,\partial_{0}^{n-l}\,A_{-2}^{0}(\mathbf{y-x})\,=\,\frac{(n+1)!}{4^l}\,\overline{A_{-(n+2)}^{l}\bigl(\overline{\mathbf{y-x}}\bigr)}
\end{equation*}
holds. In the context of the Taylor series expansion (\ref{Equation::TaylorSeries}), we obtain, in
a manner of speaking, for the $n$-th derivative of the function $f$ the relation
\begin{equation*}
\bar{\partial}_{\sIC}^{\,l}\,\partial_{0}^{n-l}\,f(\mathbf{x}) \;=\;
\frac{(n+1)!}{4^{l+1}\pi}\int_{S^{2}_{\!\rho}}\overline{A_{-(n+2)}^{l}\bigl(\overline{\mathbf{y-x}}\bigr)}\,d\mathbf{y}^{\ast}f(\mathbf{y})
\end{equation*}
and thus for the Taylor coefficients
\begin{equation*}
\frac{1}{n!}\bar{\partial}_{\sIC}^{\,l}\,\partial_{0}^{n-l}\,f(\mathbf{x})\Bigl|_{\mathbf{x}=\mathbf{0}}
\;=\;
\frac{n+1}{4^{l+1}\pi}\int_{S^{2}_{\!\rho}}\overline{A_{-(n+2)}^{l}\bigl(\bar{\mathbf{y}}\bigr)}\,d\mathbf{y}^{\ast}f(\mathbf{y}),
\end{equation*}
where $l=0,\ldots,n$ and $n\in\INo$.

Once again, we expose the strong analogy of the Appell polynomials $\left\{ A_{k}^{0}
\right\}_{k\in\IZ\setminus\{-1\}}$ to the complex Laurent series expansion. In this connection, we
remind also on the direct relation of this subset with the Appell polynomials constructed by H.R.
Malonek et al. (q.v. \cite{Falcao2006,Malonek_Falcao_2007,Mal_Fa_2007}) and with the Fueter-Sce
extensions of the complex monomials $z^{n}$ (q.v. \cite{NGuerlebeck2008}), respectively, which
gives a natural explanation of this issue. Having further in mind that each of this functions
belongs to the subspace $\mathcal{A}$ and thus, as in the complex case, for each element the
relation $\overline{A_{k}^{0}(\bar{\mathbf{x}})} = A_{k}^{0}(\mathbf{x})$ holds, the Laurent series
expansion (\ref{V::Formel:Laurentreihe_Kugelschale}) for the case $l=0$ finally reduces to
\begin{equation*}
f(\mathbf{x}) \;=\; \sum_{{k=-\infty \atop k\neq
-1}}^{\infty}\,A_{k}^{0}(\mathbf{x})\,\boldsymbol{\gamma}_{k,0},\quad
\boldsymbol{\gamma}_{k,0}\;=\;\frac{|k+1|}{4\pi}\int_{S^{2}_{\!\rho}}A_{-(k+2)}^{0}(\mathbf{y})\,d\mathbf{y}^{\ast}f(\mathbf{y})\,.
\end{equation*}
A comparison with the complex Laurent series expansion for the annulus around the origin
\begin{equation*} f(z) \;=\; \sum_{k=-\infty}^{\infty}z^{k}\,\boldsymbol{\gamma}_{k}, \quad
\boldsymbol{\gamma}_{k}\;=\;\frac{1}{2\pi i}\int_{S^{1}_{\!\rho}}
\frac{f(\zeta)}{\zeta^{k+1}}\,d\zeta
\end{equation*}
clearly shows the similarity of both series expansions.

\section{Conclusions}

In this article a constructive approach to generalize the canonical series expansions (Fourier,
Taylor, Laurent) of the complex one-dimensional case to dimension 3 was presented. In the framework
of hypercomplex functions theory, in particular the theory of $\IH$-holomorphic functions, very
recent orthogonal bases of solid spherical monogenics \cite{Bock2009,Bock2009a} were defined which
fully generalize the behavior of the holomorphic $z$-powers with regard to the derivation and
primitivation. Further, these bases allow the definition of a Fourier and a new orthogonal
Taylor-type series expansion with the special property that their hypercomplex derivative and
primitive are again orthogonal series and that the coefficients of both series expansions can be
explicitly (one-to-one) linked with each other (see, Figure \ref{Figure::SeriesExpansionsH}).
\begin{figure}[htb]
\begin{center}
\includegraphics[scale=.5,angle=0]{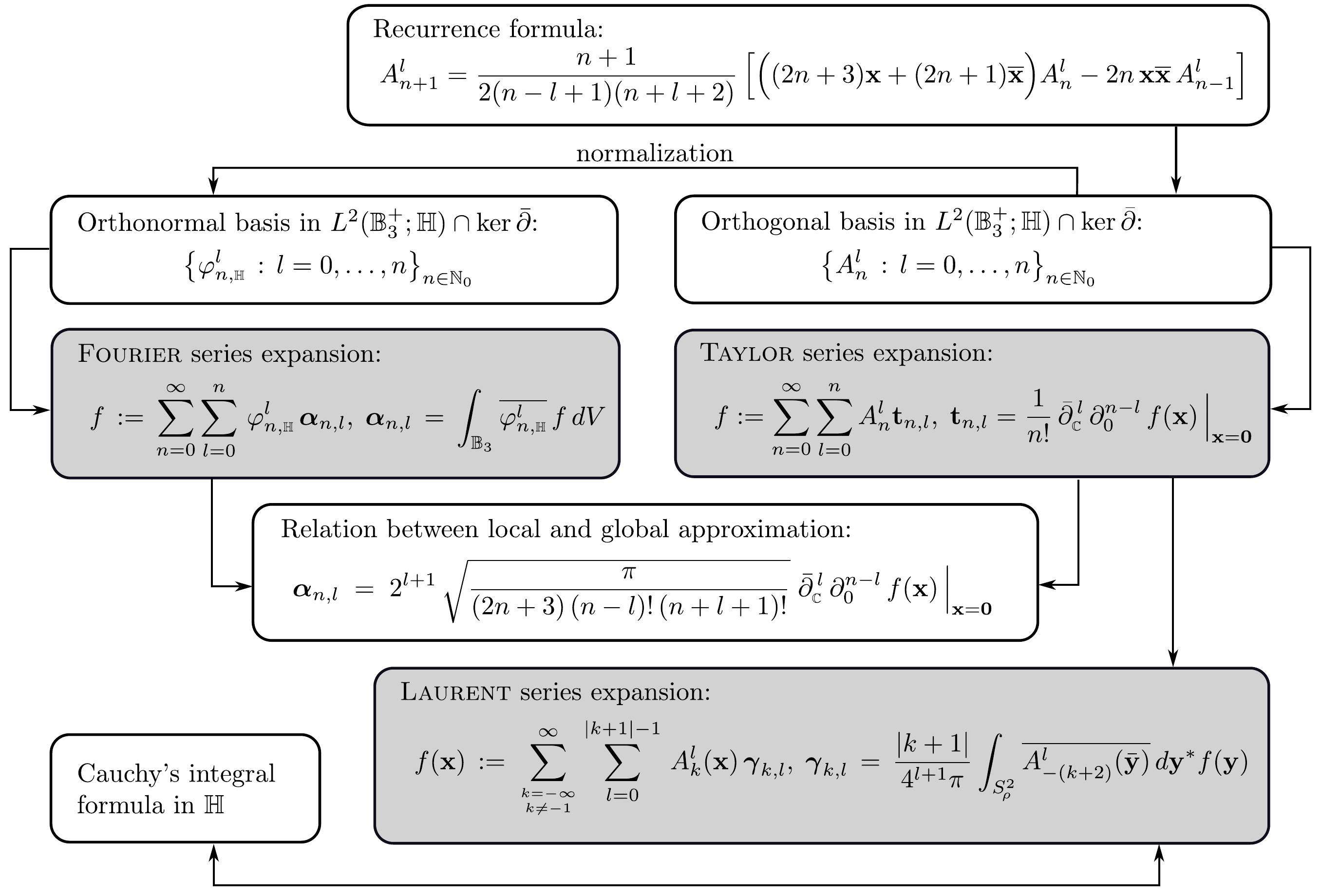}
\caption{ Power and Laurent series expansions in $\IH$. } \label{Figure::SeriesExpansionsH}
\end{center}
\end{figure}
This is a direct consequence of the aforementioned properties and corresponds to the complex power
series expansions as well. Recently, it was shown that the orthonormal basis presented here has
some deeper theoretical meaning in group theory in particular in relation to the so-called
Gelfand-Tsetlin basis. First works in this context can be found in \cite{Bock2010c} and
\cite{Lavicka2010}. In view of the practical applicability of these series expansion one could
prove very compact recurrence formulae (see, e.g., the two-step formula in Figure
\ref{Figure::SeriesExpansionsH}) as well as a closed-form representation for the elements of the
bases. A very interesting fact is that each basis polynomial of degree $n$ and signature $l$ is
recursively generated from the monogenic constant $(x_{1}-x_{2}\mathbf{e}_{3})^{l}$ of degree $l$
in $(n-l)$ recursion steps. In analogy to the complex theory it was then natural to ask for the
spatial analogue to the holomorphic $z$-powers of negativ degree. Here, a new orthonormal basis of
outer solid spherical monogenics in $L^{2}(\mathbb{B}_{3}^{-};\IH) \cap \ker \bar{\partial}$ was
constructed by applying the Kelvin transformation and some real normalization factor to the
elements of the orthonormal basis in $L^{2}(\mathbb{B}_{3}^{+};\IH) \cap \ker \bar{\partial}$.
Noteworthy here is that the transformation was also preserving the properties regarding the
hypercomplex derivative and the primitive. Taking now into consideration that the Laurent series
expansion in $\IC$ is defined in terms of the non-normalized $z$-powers and thus having the Appell
property, one could also construct an outer Appell basis in $L^{2}(\mathbb{B}_{3}^{-};\IH) \cap
\ker \bar{\partial}$ which is consistent with the Appel basis in $L^{2}(\mathbb{B}_{3}^{+};\IH)
\cap \ker \bar{\partial}$. The latter enabled the explicit definition of a new orthogonal Laurent
series expansion in the spherical shell. Finally, the interplay of the Laurent series expansion
with Cauchy's integral formula and the Taylor series expansion was emphasized. Analogously as in
the complex case, it was shown that each Taylor coefficient can be expressed by some surface
integral over a higher order derivative of the Cauchy kernel which furthermore coincides with the
respective Laurent coefficient of the secondary part of the series expansion.

\section*{Acknowledgements}
The author expresses his gratitude to his mentor Prof. K. G\"{u}rlebeck for the helpful discussions.



\end{document}